% !!!IMPORTANT NOTE: Please read carefully all information including those preceded by % sign
%Before you compile the tex file please download the class file AIMS.cls from the following URL link to the
%local folder where your tex file resides. http://aimsciences.org/journals/tex-sample/AIMS.cls.
\documentclass{amsart}
\usepackage{amsmath}
\usepackage{lmodern}
  \usepackage{paralist}
  \usepackage{graphics} %% add this and next lines if pictures should be in esp format
  \usepackage{epsfig} %For pictures: screened artwork should be set up with an 85 or 100 line screen
\usepackage{graphicx}  \usepackage{epstopdf}%This is to transfer .eps figure to .pdf figure; please compile your paper using PDFLeTex or PDFTeXify.
 \usepackage[colorlinks=true]{hyperref}
   % Warning: when you first run your tex file, some errors might occur,
   % please just press enter key to end the compilation process, then it will be fine if you run your tex file again.
   % Note that it is highly recommended by AIMS to use this package.
\hypersetup{urlcolor=blue, citecolor=red}

  \textheight=8.2 true in
   \textwidth=5.0 true in
    \topmargin 30pt
     \setcounter{page}{1}

% The next 5 line will be entered by an editorial staff.

 % Please minimize the usage of "newtheorem", "newcommand", and use
 % equation numbers only situation when they provide essential convenience
 % Try to avoid defining your own macros

\newtheorem{thm}{Theorem}[section]
\newtheorem{prop}[thm]{Proposition}
\newtheorem{cor}[thm]{Corollary}
\newtheorem{lem}[thm]{Lemma}
\newtheorem{hyp}[thm]{Hypothesis}
\newtheorem{defi}[thm]{Definition}
\newtheorem{remarque}[thm]{Remark}
\newtheorem{nota}[thm]{Notation}

%% Place the running title of the paper with 40 letters or less in []
 %% and the full title of the paper in { }.
\title[Trend to the equilibrium for the Fokker-Planck-Magnetic system] %Use the shortened version of the full title
      {Trend to the equilibrium for the Fokker-Planck system with a strong external magnetic field}

% Place all authors' names in [ ] shown as running head, Leave { } empty
% Please use `and' to connect the last two names if applicable
% Use FirstNameInitial.  MiddleNameInitial. LastName, or only last names of authors if there are too many authors
\author[Zeinab karaki]{}

% It is required to enter 2010 MSC.
\subjclass{Primary: 47D06, 35Q84; Secondary: 35P15, 82C40.}
% Please provide minimum  5 keywords.
 \keywords{return to equilibrium; hypocoercivity; Fokker-Planck equation ; magnetic field;
enlargement space.}

% Email address of each of all authors is required.
% You may list email addresses of all other authors, separately.
 \email{zeinab.karaki@univ-nantes.fr}

% Put your short thanks below. For long thanks/acknowlegements,
%please go to the last acknowlegments section.
%\thanks{The first author is supported by NSF grant xx-xxxx}

% Add corresponding author at the footnote of the first page if it is necessary. 
% Plase add $^*$ adjacent to the corresponding author's name on the first page. 
% The example shown in this template is if the first author is the corresponding author.

\begin{document}
\maketitle

% Enter the first author's name and address:
\centerline{\scshape Zeinab Karaki}
\medskip
{\footnotesize
% please put the address of the first author
 \centerline{Universit\'e de Nantes}
   \centerline{Laboratoire de Mathematiques Jean Leray}
   \centerline{ 2, rue de la Houssini\`ere}
    \centerline{BP 92208 F-44322 Nantes Cedex 3, France}
} % Do not forget to end the {\footnotesize by the sign }

\medskip

\bigskip

% The name of the associate editor will be entered by an editorial staff
% "Communicated by the associate editor name" is not needed for special issue.
 %\centerline{(Communicated by the associate editor name)}

%The abstract of your paper
\begin{abstract}
We consider the Fokker-Planck equation with a strong external magnetic field. Global-in-time solutions are built near the Maxwellian, the global equilibrium state for the system. Moreover, we prove the convergence to equilibrium at exponential rate. The results are first obtained on spaces with an exponential weight. Then they are extended to larger functional spaces, like the Lebesgue space and the Sobolev space with polynomial weight, by the method of factorization and enlargement of the functional space developed in [Gualdani, Mischler, Mouhot, 2017].
\end{abstract}
%\begin{keywords}
%return to equilibrium, hypoellipticity; semigroup; hypocoercivity; hypodissipativity; Fokker-Planck equation; magnetic field; Lorentz force;
%enlargement space.
%\end{keywords}

\tableofcontents

\section{Introduction and main results}
\subsection{Introduction}
In this article, we are interested in inhomogeneous kinetic equations. These equations model the dynamics of a charged particle system described by a  probability density  $ F (t, x, v) $ representing at time $ t  \geq 0 $ the density of particles at position $ x \in \mathbb {T}^{3} $ and at velocity $ v \in \mathbb {R}^{3} $. \\
~\par In the absence of force and collision, the particles move in a straight line at constant speed according to the principle of Newton, and $ F $ is the solution of the Vlasov equation
$$ \partial_{t} F + v \cdot \nabla_{x} F = 0, $$
where $ \nabla_{x} $ is the gradient operator with respect to the variable $ x $, and the symbol  <<$\cdot$>> represents the scalar product in the Euclidean space $ \mathbb{R}^{3} $. When there are forces and shocks, this equation must be corrected. This leads to various kinetic equations, the most famous being those of Boltzmann, Landau and Fokker-Planck. The general model for the dynamics of the charged particles, assuming that they undergo shocks modulated by a collision kernel $ Q $ and under the action of an external force $ \mathcal{F} \in \mathbb{R}^{3} $, is written by the following kinetic equation:
\begin{align}
\partial_{t}F + v \cdot \nabla_{x}F + \mathcal{F}(t,x)\cdot  \nabla_{v}F = Q(F),
\end{align}
where $ Q $, possibly non-linear, acts only in velocity and where $ \mathcal{F} $ can even depend on $ F $ via Poisson or Maxwell equations. \\
~\par  According to the $H$-theorem of Boltzmann in 1872, there exists a quantity  $ H (t) $ called entropy which varies monotonous over time, while the gas relaxes towards the thermodynamic equilibrium characterized by the Maxwellian: it is a solution time independent of  equation $ (1) $  having the same mass as the initial system. The effect of the collisions will bring the distribution $ F (t) $ to the Maxwellian with time. A crucial question is then to know the rate of convergence and this question has been widely studied over the past 15 years, in particular with the so called hypocoercive strategy (see \cite{villani2006hypocoercive} or \cite{herau2017introduction} for an introductive papers).

\subsection{Fokker-Planck equation with a given external magnetic field}
\subsubsection{Presentation of the equation}
We are interested in the Fokker-Planck inhomogeneous linear kinetic equation with a fixed external magnetic field $ x\mapsto B_{e}(x) \in \mathbb{R}^{3} $ which depends only on the spatial variables $ x \in  \mathbb{T}^{3} \equiv [0,2 \pi]^{3} $. The Cauchy problem  is the following:
\begin{equation}
\begin{cases}
\partial_{t} F + v \cdot \nabla_{x} F - (v \wedge B_{e})\cdot \nabla_{v}F= \nabla_{v}\cdot (\nabla_{v} + v) F \\
F(0,x,v)=F_{0}(x,v),
\end{cases}
\label{1}
\end{equation} 
Here $ F $ is the distribution function of the particles, and represents the  density of probability of  presence of particles at time $ t \geq 0 $ at the position $ x \in \mathbb{T}^{3} $ and with a speed $ v \in \mathbb{R}^{3} $. (Where <<$\wedge$>> indicates the vector (cross) product.)
\\
~\par
We define the Maxwellian 
$$ \mu (v): = \frac{1}{(2 \pi)^{3/2}} \, e^{- v^{2} / 2}. $$
It is the (only) time independent solution of the system \eqref{1}, since
$$\partial_{t} \mu + v \cdot \nabla_{x} \mu=0, \quad\nabla_{v} \cdot(-\nabla_{v} + v) \mu =0 \quad\text{ and } \quad(v \wedge B_{e})\cdot \nabla_{v}\mu =0 .$$
Concerning \eqref{1}, we are interested in the return to the global equilibrium $ \mu $ and the convergence  of $ F $ to $ \mu $ in norms $ L^{2} (dx d \mu) $ and $ H^{1} (dxd \mu) $  defined by
\begin{align*}
\forall h\in L^{2}(dxd\mu), \quad \Vert h\Vert_{ L^{2}(dxd\mu)}^{2}&=\iint_{\mathbb{T}^{3}\times\mathbb{R}^{3}}\, \vert h(x,v)\vert^{2}\,dxd\mu,\\
\forall h\in H^{1}(dxd\mu), \quad \Vert h\Vert_{H^{1}(dxd\mu)}^{2}&=\Vert h\Vert_{L^2 (dxd\mu )}^{2}+\Vert \nabla_{x}h\Vert_{L^2 (dxd\mu )}^{2}+\Vert \nabla_{v}h\Vert_{L^2 (dxd\mu )}^{2},
\end{align*}
where $ d \mu: = \mu (v) dv $ and the (real) Hilbertian scalar product $ \langle.,. \rangle $ on the space $ L^{2} (dxd \mu) $ defined by
$$ \forall g,h \in L^{2} (dxd \mu), \quad \langle h , g \rangle = \iint hg \, dxd \mu. $$
To answer such questions, when $ F $ is close to the equilibrium $ \mu $, we define  $ f $ to be the standard perturbation of $ F $ defined by
$$ F=\mu + \mu f .$$
We then rewrite equation \eqref{1} in the following form:
\begin{equation}
\begin{cases}
\partial_{t} f + v \cdot \nabla_{x} f - (v \wedge B_{e})\cdot \nabla_{v}f=-(- \nabla_{v} + v  )\cdot \nabla_{v}f  \\
f(0,x,v)=f_{0}(x,v)
\end{cases}
\label{2}
\end{equation}
We now introduce the main assumption on $B_e$.
\begin{hyp} \label{hyp 1}
The magnetic field $ B_{e} $ is indefinitely derivable on $ \mathbb{T}^{3} $.
\end{hyp} 
\subsubsection{The main results}
First we will show that the problem \eqref{2} is well-posed in the  $ L^{2} (dxd \mu) $ space, in the sense of the associated semi-group (See \cite{pazy2012semigroups}).
  We associate with the problem \eqref {2} the  operator $ P_{1} $ defined by
  \begin{align}
&P_{1}:= X_{0} - L ,\label{18}\\
\text{ where } &X_0= v \cdot \nabla_{x}  - (v \wedge B_{e})\cdot \nabla_{v} \text{ and } L= (-\nabla_{v} + v)\cdot\nabla_{v}  \label{18bis}
\end{align}
The problem \eqref{2} is then written
\begin{equation}
\begin{cases}
\partial_{t}f + P_{1}f =0 \\
f(0,x,v)=f_{0}(x,v)
\end{cases}
\label{5}
\end{equation}
\begin{thm} \label{thm 1}
Under Hypothesis \ref{hyp 1} and with $ f_{0} \in C^{\infty}_{0}(\mathbb{T}^{3}\times \mathbb{R}^{3}) $, the problem \eqref{5} admits a unique solution
  $ f \in C ^{1} ([0, + \infty [, L^{2} (dxd \mu)) \cap C ([0, + \infty [, C^{\infty}_{0}(\mathbb{T}^{3}\times \mathbb{R}^{3})) $ .
\end{thm}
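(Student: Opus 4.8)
The plan is to show that $-P_1$ generates a strongly continuous contraction semigroup on the Hilbert space $H:=L^2(dxd\mu)$, and then to read off existence, uniqueness and smoothness by combining semigroup theory with the hypoellipticity of $\partial_t+P_1$. I take as core the space $\mathcal D:=C_0^\infty(\mathbb T^3\times\mathbb R^3)$, which is dense in $H$ and which $P_1$ from \eqref{18}--\eqref{18bis} maps into itself, all its coefficients being smooth by Hypothesis~\ref{hyp 1}. The first step is the energy identity. Integrating by parts in $x$ (periodicity) and in $v$ against the Gaussian weight $\mu$, and using the two algebraic facts $\nabla_v\cdot(v\wedge B_e)=0$ and $(v\wedge B_e)\cdot v=0$, I would check that the transport--magnetic field $X_0$ is skew-adjoint on $H$, namely $\langle X_0h,h\rangle=0$, while the Ornstein--Uhlenbeck collision part is nonnegative and self-adjoint. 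Together these give the key accretivity estimate
\[
\langle P_1 h,h\rangle=\|\nabla_v h\|_{H}^2\ge 0,\qquad h\in\mathcal D,
\]
so that $-P_1$ is dissipative; the same computation applied to the formal adjoint $P_1^{\ast}=-X_0+(\text{Ornstein--Uhlenbeck part})$ yields $\langle P_1^{\ast} h,h\rangle=\|\nabla_v h\|_{H}^2\ge0$ as well.

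The heart of the proof, and the main obstacle, is to promote this dissipativity to maximal dissipativity, i.e. to verify the range condition $\mathrm{Ran}(\lambda+P_1)=H$ for some $\lambda>0$ (Lumer--Phillips). The difficulty is that the coercivity above controls only the $v$-gradient, the operator being degenerate in $x$, so one cannot invert $\lambda+P_1$ directly. I would argue by density of the range: if $u\in H$ is orthogonal to $\mathrm{Ran}(\lambda+P_1|_{\mathcal D})$, then $(\lambda+P_1^{\ast})u=0$ in the sense of distributions. Here Hörmander's theorem enters, since the fields $\partial_{v_1},\partial_{v_2},\partial_{v_3}$ together with their commutators $[\partial_{v_i},X_0]=\partial_{x_i}+(\text{first order in }v)$ span the whole tangent space; consequently $\lambda+P_1^{\ast}$ is hypoelliptic, so $u$ is in fact $C^\infty$ and rapidly decreasing, the extra $v$-derivatives produced by the magnetic commutators being of lower order and harmless. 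Once $u$ is known to be smooth the accretivity estimate is licit for $u$ and gives $\lambda\|u\|_H^2+\|\nabla_v u\|_H^2=\langle(\lambda+P_1^{\ast})u,u\rangle=0$, whence $u=0$. This makes $\mathrm{Ran}(\lambda+P_1)$ dense, and the dissipative estimate makes it closed, so the range condition holds and $\overline{P_1}$ is maximal accretive, i.e. $-\overline{P_1}$ is $m$-dissipative.

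With the contraction semigroup $(e^{-t\overline{P_1}})_{t\ge0}$ in hand, the Hille--Yosida theorem gives, for every $f_0\in D(\overline{P_1})$, a unique solution $f(t)=e^{-t\overline{P_1}}f_0\in C^1([0,+\infty[,H)\cap C([0,+\infty[,D(\overline{P_1}))$ of \eqref{5}, uniqueness being automatic from the contraction property. Since $P_1$ preserves $\mathcal D$, an initial datum $f_0\in\mathcal D$ lies in $D(\overline{P_1}^{\,k})$ for every $k$, so $f\in C^\infty([0,+\infty[,H)$ with $f(t)\in\bigcap_k D(\overline{P_1}^{\,k})$ for all $t$. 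The last step is to convert this into genuine $(x,v)$-regularity: from the hypoellipticity of $\partial_t+P_1$ (Hörmander again) and the a priori bounds on all the iterates $\overline{P_1}^{\,k}f(t)$ in $H$, hypoelliptic regularity together with Sobolev embedding in the weighted space upgrade $f$ to the asserted regularity $f\in C([0,+\infty[,C_0^\infty(\mathbb T^3\times\mathbb R^3))$. I expect the verification of the range condition in the second paragraph --- reconciling the $v$-only coercivity with hypoelliptic regularity in the presence of the magnetic commutator terms --- to be the genuinely delicate point, the energy identity and the final regularity bootstrap being comparatively routine.
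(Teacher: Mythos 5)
Your overall architecture is the same as the paper's: accretivity from the skew-adjointness of $X_0$ plus the Ornstein--Uhlenbeck part, then the range condition via orthogonality, hypoellipticity of the formal adjoint equation in the sense of H\"ormander, and finally Hille--Yosida. (The paper works after conjugation by $\mu^{1/2}$ with $P_{1/2}=X_0+b^*b$ on flat $L^2(dx\,dv)$, where $b=\nabla_v+\tfrac{v}{2}$, rather than with $P_1$ on $L^2(dxd\mu)$; this is unitarily equivalent and cosmetic, except that it makes the harmonic potential $\tfrac{v^2}{4}$ explicit, which matters below.)

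However, there is a genuine gap at exactly the point you flagged as delicate. You assert that hypoellipticity makes $u$ ``$C^\infty$ and rapidly decreasing,'' and that once $u$ is smooth ``the accretivity estimate is licit for $u$,'' yielding $\lambda\|u\|^2+\|\nabla_v u\|^2=0$. Hypoellipticity is a purely local statement: it gives interior smoothness and no decay whatsoever in $v$. A priori $u$ is only in $L^2$; you do not know that $\nabla_v u\in L^2$, that $vu\in L^2$, or that the boundary terms at infinity in the integration by parts vanish --- and the magnetic term $(v\wedge B_e)\cdot\nabla_v$ has a coefficient growing linearly in $v$, so even pairing the equation with $u$ is not obviously justified. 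This is precisely what the paper's truncation argument supplies: with $\xi_k(v)=\xi(v/k)$ one tests the orthogonality relation against $h=\xi_k^2 u$, integrates by parts, and obtains
\begin{align*}
\Vert \xi_{k} u \Vert^{2} + \tfrac{1}{4}\Vert \xi_{k}\, v\, u \Vert^{2}
\leq \tfrac{c}{k^{2}} \Vert u \Vert^{2} + \tfrac{c}{k} \Vert (v \wedge B_{e})\,\xi_{k}\, u \Vert\, \Vert u \Vert,
\end{align*}
in which the magnetic error term, despite its linear growth in $v$, carries a factor $1/k$ and is absorbed by Young's inequality into the coercive term $\tfrac{1}{8}\Vert\xi_k v u\Vert^2$ coming from the potential $\tfrac{v^2}{4}$; letting $k\to\infty$ then forces $u=0$. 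Without this cutoff-and-absorption step (or an equivalent a priori weighted estimate), your second paragraph does not close, and since everything downstream (Lumer--Phillips, the semigroup, the regularity bootstrap) rests on it, the proof as written is incomplete at its central point. A secondary remark: your final upgrade to $f\in C([0,+\infty[,C^{\infty}_{0}(\mathbb{T}^{3}\times\mathbb{R}^{3}))$ via hypoelliptic regularity of $\partial_t+P_1$ is plausible but only sketched; the paper is equally terse there, so I would not count it against you in the comparison.
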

We also show the exponential convergence towards equilibrium in the norm $ L^{2} (dx d \mu) $.

\begin{thm} \label{thm 2}
Let $ f_{0} \in L^{2} (dxd \mu) $ such that $ \langle f_{0} \rangle = \displaystyle \iint f_0 (t, x, v) \, dxd \mu = 0 $. If $ B_{e} $ satisfies  Hypothesis \ref{hyp 1}, then there exist $ \kappa > 0 $ and $ c> 0 $ (two explicit constants  independent of $ f_{0} $) such that
$$\forall t \geq 0, \quad  \Vert f(t) \Vert_{L^2 (dxd\mu )} \leq c e^{-\kappa t} \Vert f_{0} \Vert_{L^2 (dxd\mu ) }.$$
\label{4}
\end{thm}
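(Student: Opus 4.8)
The plan is to prove the exponential decay by a hypocoercivity argument in the spirit of Dolbeault--Mouhot--Schmeiser, exploiting the fact that the magnetic term does not see the macroscopic (hydrodynamic) modes. I first record the algebraic structure of the generator. In $L^2(dxd\mu)$ the transport--magnetic operator $X_0 = v\cdot\nabla_x - (v\wedge B_e)\cdot\nabla_v$ is skew-adjoint: the term $v\cdot\nabla_x$ integrates by parts to zero on the torus, while $-(v\wedge B_e)\cdot\nabla_v$ is skew because $\nabla_v\cdot(v\wedge B_e)=0$ and $(v\wedge B_e)\cdot v = 0$, so that $(v\wedge B_e)\cdot\nabla_v\mu = 0$. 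The collision operator is $L=\nabla_v^*\nabla_v$, symmetric and nonnegative with $\langle Lf,f\rangle = \|\nabla_v f\|^2$ and $\ker L = \{f : f=f(x)\}$. From \eqref{2} this gives the energy identity $\frac{d}{dt}\tfrac12\|f\|_{L^2(dxd\mu)}^2 = -\|\nabla_v f\|_{L^2(dxd\mu)}^2$, together with conservation of mass $\langle f(t)\rangle=0$. The dissipation is degenerate (only in $v$), so this identity gives no rate and hypocoercivity is needed.

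Let $\Pi$ be the orthogonal projection onto $\ker L$, i.e. $\Pi f(x,v) = \int f(x,w)\,d\mu(w)$, and set $A := (1+(X_0\Pi)^*(X_0\Pi))^{-1}(X_0\Pi)^*$. The modified entropy is $\mathcal{H}(f) = \tfrac12\|f\|^2 + \epsilon\langle Af,f\rangle$, which for $\epsilon$ small is equivalent to $\|f\|_{L^2(dxd\mu)}^2$ since $A$ is bounded. The two coercivity inputs are standard and, crucially, $B_e$-free: microscopic coercivity $\|\nabla_v f\|^2 \ge \|(1-\Pi)f\|^2$ is the Poincaré (spectral-gap) inequality for the Gaussian $\mu$; macroscopic coercivity $\|X_0\Pi f\|^2 \ge \lambda_M\|\Pi f\|^2$ for mean-zero $f$ follows from the Poincaré inequality on $\mathbb{T}^3$, because on $\ker L$ the magnetic term vanishes, so $X_0\Pi f = v\cdot\nabla_x(\Pi f)$ and $\|X_0\Pi f\|^2 = \|\nabla_x\Pi f\|_{L^2(dx)}^2$. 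For the same reason $X_0\Pi$, hence $A$ itself, are independent of $B_e$; one also checks $\Pi X_0\Pi=0$ since $\int v\,d\mu=0$.

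The magnetic field enters only through the auxiliary operators controlling the off-diagonal terms in $\frac{d}{dt}\mathcal{H}$, namely $AX_0(1-\Pi)$ and $AL$. The crux of the proof is to show these are bounded on $L^2(dxd\mu)$ despite the magnetic contribution $A\,[(v\wedge B_e)\cdot\nabla_v](1-\Pi)$: the factor $(v\wedge B_e)\cdot\nabla_v$ is first order in $v$ and carries a linear-in-$v$ weight, and one must verify that composition with $A$ — which, through $(X_0\Pi)^*=-\Pi X_0$, integrates against the Gaussian $\mu$ and supplies the missing regularity — absorbs both. This is where Hypothesis \ref{hyp 1} is used: the smoothness and, by compactness of $\mathbb{T}^3$, uniform boundedness of $B_e$ and its derivatives make these operators bounded, with norms controlled by $\|B_e\|_{W^{k,\infty}}$. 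The coupling is visible in the commutator $[\partial_{v_i},X_0] = \partial_{x_i} - (e_i\wedge B_e)\cdot\nabla_v$, which shows how the magnetic term perturbs the transfer of coercivity from $v$ to $x$. I expect this boundedness estimate to be the main obstacle, and the only genuinely magnetic-specific step.

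Granting these bounds, the Dolbeault--Mouhot--Schmeiser computation yields $\frac{d}{dt}\mathcal{H}(f) \le -\kappa'\,\mathcal{H}(f)$ for $\epsilon$ small enough in terms of $\lambda_M$ and the auxiliary norms (hence in terms of $B_e$). Integrating gives $\mathcal{H}(f(t)) \le e^{-\kappa' t}\mathcal{H}(f_0)$, and the equivalence $\mathcal{H}\simeq\|\cdot\|_{L^2(dxd\mu)}^2$ produces $\|f(t)\|_{L^2(dxd\mu)} \le c\,e^{-\kappa t}\|f_0\|_{L^2(dxd\mu)}$ with explicit $\kappa=\kappa'/2$ and $c$. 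The estimate is first proved for the smooth solutions furnished by Theorem \ref{thm 1}, then extended to all mean-zero $f_0\in L^2(dxd\mu)$ by density, using that both $\mathcal{H}$ and the final inequality are continuous for the $L^2$ norm. (Alternatively one may run a twisted $H^1(dxd\mu)$ functional $\|f\|^2 + a\|\nabla_v f\|^2 + 2b\langle\nabla_v f,\nabla_x f\rangle + c\|\nabla_x f\|^2$ and combine its decay with a hypoelliptic regularization estimate; the $L^2$ route above is preferable since it matches the hypothesis $f_0\in L^2$ directly.)
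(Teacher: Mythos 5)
You take the Dolbeault--Mouhot--Schmeiser route, and it is worth noting first that your functional is not merely in the same spirit as the paper's: it \emph{is} the paper's entropy in different clothing. Since $\Pi X_0 f=\nabla_x\cdot m(f)$ with $m(f)=\int vf\,d\mu$ (the magnetic part of $X_0$ disappears under $\Pi$, see below) and $(X_0\Pi)^*(X_0\Pi)=-\Delta_x$ on macroscopic functions, one computes
\begin{equation*}
\langle Af,f\rangle=\bigl\langle (1-\Delta_x)^{-1}(X_0\Pi)^*f,\;\Pi f\bigr\rangle=\langle \Lambda_x^{-2}\nabla_x r(f),\,m(f)\rangle,
\end{equation*}
which is exactly the cross term of $\mathcal{F}_{\varepsilon}$ in Proposition \ref{prop 3}. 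The genuine difference is organizational. The paper runs H\'erau's micro--macro version: it differentiates the mass and momentum equations (Lemma \ref{lem 1}), writes every remainder as a generic bounded map $Op_{1}:L^{2}\to\mathbb{H}^{-1}$, and closes with the two Poincar\'e inequalities you also use. You instead invoke the DMS black box, whose remaining hypotheses are the boundedness of $AX_0(1-\Pi)$ and $AL$. The abstract route is more systematic and transferable to other collision operators; the paper's route is self-contained, keeps the constants explicit, and --- this is the point --- makes the magnetic term's harmlessness visible in one line rather than leaving it as an operator bound to be checked.

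That operator bound is the one substantive step you defer (``I expect this boundedness estimate to be the main obstacle''), and while your expectation is correct, as written this is the gap in your proposal: you grant precisely the estimate where $B_e$ could in principle cause trouble. It is in fact much easier than you anticipate, because of an exact cancellation you gesture at but do not record: since $\nabla_v\cdot(v\wedge B_e)=0$ and $(v\wedge B_e)\cdot v=0$, integration against the Gaussian gives $\Pi\circ\bigl[(v\wedge B_e)\cdot\nabla_v\bigr]=0$ identically. Hence $(X_0\Pi)^*=-\Pi X_0=-\Pi\, v\cdot\nabla_x$ carries no magnetic part at all, and in $AX_0(1-\Pi)=-(1+\Lambda)^{-1}\Pi X_0 X_0(1-\Pi)$ the field survives only through integrations by parts producing bounded coefficients $B_e$, $\nabla_x B_e$ --- no linear-in-$v$ weight remains to be absorbed, so the feared obstacle reduces to the standard field-free verification. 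This is exactly the mechanism the paper exploits: the field enters the macroscopic system only through $m\wedge B_e$, and since $m(f)=\int v\,(1-\Pi)f\,d\mu$ one has $\Vert m\wedge B_e\Vert\le\Vert B_e\Vert_{L^{\infty}}\Vert h\Vert$, i.e.\ $m\wedge B_e=Op_1(h)$ (the remark following Lemma \ref{lem 1}), after which the $B_e$-free computation closes. Carry out that one verification --- a moment computation of a few lines --- and your proof is complete; everything else in your outline (skew-adjointness of $X_0$, the two coercivity inputs, $\Pi X_0\Pi=0$, the commutator, the density argument, and $\kappa=\kappa'/2$) is correct and matches the paper.
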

Note that in the preceding statement the mean $ \langle f_ {0} \rangle$ is preserved over time.
~\par 
We give a result about the return to the global equilibrium $ \mu $ with an exponential decay rate in the space $ H^1 (dxd \mu) $.
\begin{thm} \label{thm 4}
  There exist $ c, \kappa > 0 $ such that $ \forall f_{0} \in H^{1} (dxd \mu) $ with $ \langle f_{0} \rangle = 0 $, the solution $ f  $ of the system \eqref{2} satisfies
  \begin{align*}
 \forall t \geq 0, \quad \Vert f(t) \Vert_{H^{1}(dxd\mu)} \leq c e^{-\kappa t} \Vert f_{0} \Vert_{H^{1}(dxd\mu)}.
 \end{align*}
\end{thm}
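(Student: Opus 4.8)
The plan is to build a Lyapunov functional equivalent to the squared $H^{1}(dxd\mu)$ norm and to show that it decays exponentially, thereby upgrading the $L^{2}$ estimate of Theorem \ref{thm 2} to first order. Concretely, for positive constants $a,b,c$ with $b^{2}<ac$, I would set
\[
\mathcal G[f]=\|f\|_{L^{2}(dxd\mu)}^{2}+a\|\nabla_{v}f\|_{L^{2}(dxd\mu)}^{2}+2b\langle\nabla_{v}f,\nabla_{x}f\rangle+c\|\nabla_{x}f\|_{L^{2}(dxd\mu)}^{2}.
\]
Since the condition $b^{2}<ac$ makes the cross term dominated by the diagonal ones through Young's inequality, one has $\mathcal G[f]\simeq\|f\|_{H^{1}(dxd\mu)}^{2}$, so it suffices to prove $\tfrac{d}{dt}\mathcal G[f(t)]\le-2\kappa\,\mathcal G[f(t)]$ along the flow \eqref{5}.

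First I would record the commutator relations driving the hypocoercive mechanism. As $X_{0}$ is skew-symmetric and $L$ is self-adjoint nonnegative on $L^{2}(dxd\mu)$ with $\langle Lg,g\rangle=\|\nabla_{v}g\|^{2}$, one gets $\tfrac{d}{dt}\|f\|^{2}=-2\|\nabla_{v}f\|^{2}$. Differentiating the equation in $v$ and $x$ and using
\[
[\nabla_{v},L]=\nabla_{v},\qquad [\nabla_{v},v\cdot\nabla_{x}]=\nabla_{x},\qquad [\nabla_{x},v\cdot\nabla_{x}]=[\nabla_{x},L]=0,
\]
together with the magnetic commutators $[\partial_{v_{i}},(v\wedge B_{e})\cdot\nabla_{v}]=(e_{i}\wedge B_{e})\cdot\nabla_{v}$ and $[\partial_{x_{i}},(v\wedge B_{e})\cdot\nabla_{v}]=(v\wedge\partial_{x_{i}}B_{e})\cdot\nabla_{v}$, I obtain an evolution identity for each piece of $\mathcal G$. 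The decisive one is the cross term: the relation $[\nabla_{v},v\cdot\nabla_{x}]=\nabla_{x}$ forces
\[
\frac{d}{dt}\langle\nabla_{v}f,\nabla_{x}f\rangle=-\|\nabla_{x}f\|^{2}-\langle\nabla_{v}f,\nabla_{x}f\rangle-2\langle\nabla_{v}^{2}f,\nabla_{v}\nabla_{x}f\rangle+R_{\mathrm{mag}},
\]
so that the $2b$-weighted cross term supplies the coercivity in $x$ absent from the plain $L^{2}$ estimate.

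Assembling the four identities expresses $\tfrac{d}{dt}\mathcal G[f]$ as a sum of the good negative terms $-(2+2a)\|\nabla_{v}f\|^{2}-2a\|\nabla_{v}^{2}f\|^{2}-2c\|\nabla_{v}\nabla_{x}f\|^{2}-2b\|\nabla_{x}f\|^{2}$, of indefinite cross terms, and of magnetic remainders. Choosing $b\ll a$ and $b\ll c$ with $a$ small, the indefinite cross terms $\langle\nabla_{x}f,\nabla_{v}f\rangle$ and $\langle\nabla_{v}^{2}f,\nabla_{v}\nabla_{x}f\rangle$ are absorbed by Young's inequality into the negative diagonal terms. It then remains to invoke the Poincaré inequality for the measure $dx\,d\mu$ on $\mathbb T^{3}\times\mathbb R^{3}$, valid because $\langle f\rangle=0$ is preserved, namely $\|f\|^{2}\le C(\|\nabla_{v}f\|^{2}+\|\nabla_{x}f\|^{2})$; combined with $\mathcal G\simeq\|f\|_{H^{1}}^{2}$ this yields $\tfrac{d}{dt}\mathcal G\le-2\kappa\mathcal G$, hence $\mathcal G[f(t)]\le e^{-2\kappa t}\mathcal G[f_{0}]$ and the claimed bound. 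One first runs the computation for $f_{0}\in C_{0}^{\infty}$ using the regularity from Theorem \ref{thm 1}, then passes to general $f_{0}\in H^{1}(dxd\mu)$ by density.

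The main obstacle is the magnetic remainder $R_{\mathrm{mag}}$, in particular the contribution of $[\nabla_{x},X_{0}]$, which produces the velocity-weighted term $(v\wedge\partial_{x_{i}}B_{e})\cdot\nabla_{v}f$ whose coefficients grow linearly in $v$. To close the estimate I would control these by the weighted Ornstein--Uhlenbeck inequality $\|v\,\nabla_{v}f\|^{2}\lesssim\|\nabla_{v}f\|^{2}+\|\nabla_{v}^{2}f\|^{2}$, obtained by integration by parts against $\mu$ using $v\mu=-\nabla_{v}\mu$, so that $R_{\mathrm{mag}}$ is dominated, after Young's inequality, by the available first- and second-order velocity dissipation. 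Here the smoothness of $B_{e}$ from Hypothesis \ref{hyp 1} and the compactness of $\mathbb T^{3}$ ensure that $B_{e}$ and its derivatives are bounded, which is precisely what makes the absorption possible.
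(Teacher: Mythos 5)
Your proposal is correct and follows essentially the same route as the paper: the paper's proof of Theorem \ref{thm 4} uses the same modified entropy $\mathcal{E}(u)=C\Vert u\Vert^{2}+D\Vert\nabla_{v}u\Vert^{2}+E\langle\nabla_{x}u,\nabla_{v}u\rangle+\Vert\nabla_{x}u\Vert^{2}$ (your $\mathcal{G}$ up to normalization, with the equivalence condition $E^{2}<D$ playing the role of your $b^{2}<ac$), the same commutator identities (its Lemma \ref{lem 3}), the same Young-type absorption with a tuned hierarchy of constants, and the same concluding Poincar\'e inequality in the $x$ and $v$ variables under the preserved mean-zero condition. The only cosmetic difference is the treatment of the $v$-weighted magnetic terms $(v\wedge\nabla_{x}B_{e})\cdot\nabla_{v}f$: where you invoke $\Vert v\,\nabla_{v}f\Vert^{2}\lesssim\Vert\nabla_{v}f\Vert^{2}+\Vert\nabla_{v}^{2}f\Vert^{2}$, the paper integrates by parts using $v\mu=\pm\nabla_{v}\mu$ (its Lemma \ref{lem 5}) and then controls second-order terms via $\Vert\nabla_{v}^{2}f\Vert^{2}\leq\Vert(-\nabla_{v}+v)\cdot\nabla_{v}f\Vert^{2}+\Vert\nabla_{v}f\Vert^{2}$ --- the same mechanism in different packaging.
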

We are interested in extending the results about the exponential decay of the semi-group to much larger spaces, following the work of Gualdani-Mischler-Mouhot in \cite{gualdani2010factorization}.
  The following result gives convergence in $ L^{p}(m) $ norms of $ F $ to $ \mu $, where the space $ L^{p} (m) $ for $ p \in [1,2] $ is the  weighted Lebesgue space associated with the norm
  \begin{align*}
\Vert F \Vert_{L^{p}(m)} := \Vert Fm\Vert_{L^{p}} = \left( \int_{\mathbb{R}^{3}\times \mathbb{T}^{3}}\, F^{p}(x,v)\,m^{p}(v) dv dx\right)^{\frac{1}{p}},
\end{align*}
for some given weight function
$ m = m (v)> 0 $ on $ \mathbb{R}^3 $. Since there is no ambiguity we again denote 
$$\langle H\rangle=\iint H \,dxdv,$$ the mean with respect to the usual $L^1$ norm.
The main result of this paper in this direction is the following.
\begin{thm}\label{thm 5}
Let $p\in [1,2]$, let $m=\langle v \rangle^{k}:=(1+\vert v\vert^2 )^{k/2}$,  $k>3(1-\displaystyle \frac{1}{p})$ , and assume Hypothesis \ref{hyp 1} . Then for all $ 0>a >3(1-\displaystyle\frac{1}{p})-k$ and for all $F_{0}\in L^{p}(m)$, there exists $c_{k,p}>0$ such that the solution $ F$ of the problem \eqref{1} satisfies the decay estimate
\begin{align}
 \forall t \geq 0, \quad \Vert F(t) - \mu\, \langle F_{0}\rangle \Vert_{L^{p}(m)} \leq c_{k,p}e^{at}\,  \Vert F_{0} - \mu \, \langle F_{0}\rangle \Vert_{L^{p}(m)}.
 \label{14}  
 \end{align}
 \end{thm}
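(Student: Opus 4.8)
The plan is to apply the factorization and enlargement method of Gualdani--Mischler--Mouhot \cite{gualdani2010factorization} so as to transfer the exponential decay already established in the Gaussian-weighted space $L^2(dxd\mu)$ (Theorem \ref{thm 2}) to the polynomially weighted spaces $L^p(m)$. Write the full generator of \eqref{1} as $\mathcal{L}F = -v\cdot\nabla_x F + (v\wedge B_e)\cdot\nabla_v F + \nabla_v\cdot(\nabla_v + v)F$, so that $\partial_t F=\mathcal{L}F$ and $\mu$ spans $\ker\mathcal{L}$. Since $L^p(m)\hookrightarrow L^1$ exactly when $k>3(1-\tfrac1p)$ (by Hölder, because $\langle v\rangle^{-k}\in L^{p'}$ iff $kp'>3$), the mean $\langle F\rangle$ is well defined on $L^p(m)$ and is conserved by the flow; it therefore suffices to study the decay of $G:=F-\mu\langle F_0\rangle$, which has zero mean and solves $\partial_t G=\mathcal{L}G$.

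First I would introduce the splitting $\mathcal{L}=\mathcal{A}+\mathcal{B}$ with $\mathcal{A}:=M\chi_R(v)$ the multiplication by a large constant $M>0$ times a smooth velocity cut-off supported in $\{|v|\le R\}$, and $\mathcal{B}:=\mathcal{L}-\mathcal{A}$; then $\mathcal{B}$ generates a semigroup on $L^p(m)$ and, $\mathcal{A}$ being bounded, so does $\mathcal{L}$. The central analytic step is to prove that $\mathcal{B}$ is hypodissipative in $L^p(m)$ with rate $a$ for every $a$ with $3(1-\tfrac1p)-k<a<0$, i.e. $\|S_{\mathcal{B}}(t)\|_{L^p(m)\to L^p(m)}\le C e^{at}$. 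To this end I would differentiate $\|S_{\mathcal{B}}(t)G\|_{L^p(m)}^p$ and estimate $\iint (\mathcal{B}G)\,|G|^{p-2}G\,m^p\,dvdx$. The two first-order terms cost nothing: the transport term $v\cdot\nabla_x$ is skew-adjoint on the torus, while the magnetic term satisfies $(v\wedge B_e)\cdot\nabla_v m=0$, because $m$ depends only on $|v|$ and $v\wedge B_e\perp v$, so after integration by parts (using that the coefficients of $B_e$ are bounded, Hypothesis \ref{hyp 1}) it contributes nothing to the weighted estimate---this is precisely the feature that makes the polynomial weight compatible with the magnetic field. The Fokker--Planck part produces, after conjugation by $m$, a nonnegative dissipation plus a zeroth-order multiplier whose leading behaviour at large $|v|$ is $3(1-\tfrac1p)-k$; choosing $M$ and $R$ large enough forces the remaining multiplier below $a$ outside $\{|v|\le R\}$ while $-M\chi_R$ dominates inside, which yields the claimed rate.

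Next I would verify the two remaining hypotheses of the enlargement theorem. Because $\mathcal{A}=M\chi_R$ is supported in $\{|v|\le R\}$, where the growing Gaussian weight and the polynomial weight are comparable up to a constant $C_R$, the operator $\mathcal{A}$ sends everything into velocity-compactly-supported functions; combined with the regularizing ($L^p$--$L^2$ hypoelliptic smoothing) property of the degenerate Fokker--Planck semigroup $S_{\mathcal{B}}$, a finite number of iterates $(\mathcal{A}S_{\mathcal{B}})^{*n}$ then maps $L^p(m)$ continuously into the small space $E=L^2(dxd\mu)$, with an integrable short-time singularity and exponential-in-$t$ decay inherited from Step~2. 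This is the ingredient that bridges the integrability gap between $L^p$ (with $p\le 2$) and $L^2$, a gap that the bare multiplication $\mathcal{A}$ cannot close on its own. With the embedding $E\hookrightarrow L^p(m)$ (the growing Gaussian weight $\mu^{-1/2}$ built into the $E$-norm dominates $m=\langle v\rangle^k$), the hypodissipativity of $\mathcal{B}$ from Step~2, the mapping property above, and the known decay of $S_{\mathcal{L}}$ on $E$ from Theorem \ref{thm 2}, the abstract enlargement theorem of \cite{gualdani2010factorization} applies through the iterated Duhamel identity
\[
S_{\mathcal{L}}(t)=\sum_{n=0}^{N-1}\bigl(S_{\mathcal{B}}*(\mathcal{A}S_{\mathcal{B}})^{*n}\bigr)(t)+\bigl(S_{\mathcal{L}}*(\mathcal{A}S_{\mathcal{B}})^{*N}\bigr)(t),
\]
restricted to the zero-mean subspace, and gives the decay \eqref{14}.

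I expect the main obstacle to be Step~2, the hypodissipativity of $\mathcal{B}$ in the weighted $L^p$ norm: the conjugation of the Fokker--Planck operator by $m=\langle v\rangle^k$ and the ensuing computation of the effective zeroth-order multiplier must be carried out carefully to reproduce exactly the threshold $3(1-\tfrac1p)-k$, and one must simultaneously check that neither the $x$-transport nor the magnetic drift spoils the sign of the dissipation. The regularization estimate required in Step~3 to pass from $L^p$ to $L^2$ is a secondary difficulty, needing either an explicit hypoelliptic smoothing bound for $S_{\mathcal{B}}$ or an iteration through intermediate spaces.
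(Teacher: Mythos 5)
Your proposal is correct and follows essentially the same route as the paper: the same splitting $\mathcal{L}_0=\mathcal{A}+\mathcal{B}$ with $\mathcal{A}=M\chi_R$, the same hypodissipativity of $\mathcal{B}-a$ on $L^p(m)$ with the threshold $3(1-\tfrac1p)-k$ (where the magnetic term drops out because $(v\wedge B_e)\cdot\nabla_v m=0$ for radial $m$, exactly as in the paper's Lemma \ref{lem 10} and Remark \ref{remarque A.2}), the same Nash-type regularization estimate for $\mathcal{A}S_{\mathcal{B}}$ bridging $L^p(m)$ to $E=L^2(\mu^{-1/2})$ (Lemma \ref{lem 11}, Corollary \ref{cor 1}, combined via Lemma \ref{lem 8}), and the same application of the Gualdani--Mischler--Mouhot enlargement theorem with the $L^2(dxd\mu)$ decay of Theorem \ref{thm 2} supplying hypothesis $(H_1)$ on the zero-mean subspace. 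The only parts left schematic (the explicit hypoelliptic smoothing bound and the duality step for $S_{\mathcal{B}}\mathcal{A}$) are precisely what the paper fills in, so there is no gap in the strategy.
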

 It is also possible to obtain the same type of results in weighted  Sobolev space $ \tilde{W}^{1, p} (m) $ which is defined by 
 $$ \tilde{W}^{1,p}(m) = \{ h\in L^p (m) \text{ such that } \langle v\rangle h , \nabla_{v}h \text{ and } \nabla_{x} h \in L^{p}(m) \}. $$
We equip the previous space with the following standard norm:
\begin{align}\label{6bis}
\Vert h \Vert_{\tilde{W}^{1,p}(m)} = \left( \Vert h\Vert_{L^{p}(m\langle v\rangle)}^{p}+ \Vert \nabla_{v}h\Vert_{L^{p}(m)}^{p}+ \Vert \nabla_{x}h\Vert_{L^{p}(m)}^{p}\right)^{\frac{1}{p}}.
\end{align}

\begin{hyp} \label{hyp 2}
Let $ p \in [1,2] $, the polynomial weight $m(v)=\langle v\rangle^{k}$ is such that 
\begin{align}
 k>3(1-\frac{1}{p})+\frac{7}{2}+ \max\left(\Vert B_e\Vert_{L^{\infty}(\mathbb{T}^{3})},\frac{1}{2}\Vert \nabla_{x}B_e\Vert_{L^{\infty}(\mathbb{T}^{3})}\right). 
\label{15} 
\end{align}
\end{hyp}
The second main result of this paper is the following.
\begin{thm}\label{thm 6}
Let $ m $ be a weight that satisfies Hypothesis \ref{hyp 2} with $ p \in [1,2] $ and assume Hypothesis \ref{hyp 1}. If $ F_{0} \in \tilde{W}^{1, p} (m) $, then there is a solution $ F  $ of the problem \eqref{1}, such that $ F(t)  \in \tilde{W}^{1, p} (m) $ for all $t\geq 0$, and it satisfies the following decay estimate:
\begin{align}
\forall t\geq 0,\quad \Vert F(t) -\mu \langle F_{0}\rangle \Vert_{\tilde{W}^{1,p}(m)} \leq C e^{at}\, \Vert F_{0}-\mu\langle F_{0}\rangle \Vert_{\tilde{W}^{1,p}(m)}
 \label{eg 8}
\end{align}
with $0>a>\max (a_{m,1}^{i}, a_{m,2}^{i}, -\kappa), i\in \{1,2,3\}$, where   $a_{m,1}^{i}$ and $a_{m,2}^{i}$  are functions defined afterwards in \eqref{def 27}-\eqref{def 29} and  \eqref{def 21}-\eqref{def 23} and $\kappa$ is defined in Theorem \ref{thm 4}.
\end{thm}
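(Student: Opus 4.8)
The plan is to apply the factorization and enlargement method of Gualdani–Mischler–Mouhot, using Theorem \ref{thm 4} as the decay estimate in the ``small'' reference space $E=H^1(dxd\mu)$ and bootstrapping it to the ``large'' space $\mathcal{E}=\tilde{W}^{1,p}(m)$. Writing the evolution \eqref{1} as $\partial_t F=\Lambda F$ with $\Lambda=\nabla_v\cdot(\nabla_v+v)-v\cdot\nabla_x+(v\wedge B_e)\cdot\nabla_v$, I would first split the generator as $\Lambda=\mathcal{A}+\mathcal{B}$, where $\mathcal{A}:=M\chi_R$ is multiplication by a large constant $M$ times a smooth cutoff $\chi_R$ supported on $\{|v|\leq R\}$, and $\mathcal{B}:=\Lambda-\mathcal{A}$. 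The parameters $M,R$ are to be fixed large enough that the extra damping $-M\chi_R$ makes $\mathcal{B}$ hypodissipative at a negative rate on all of $\mathcal{E}$.

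The core of the argument is to establish that $S_{\mathcal{B}}(t)$ decays like $e^{at}$ on $\tilde{W}^{1,p}(m)$. I would do this componentwise against the three pieces of the norm \eqref{6bis}, namely $\|\cdot\|_{L^p(m\langle v\rangle)}$, $\|\nabla_v\cdot\|_{L^p(m)}$ and $\|\nabla_x\cdot\|_{L^p(m)}$, producing for each level $i\in\{1,2,3\}$ the dissipation rates $a_{m,1}^i$ and $a_{m,2}^i$. At the levels of $\nabla_v$ and $\nabla_x$ one must commute the derivatives through $\mathcal{B}$: the diffusion and the confinement/drift term $v\cdot\nabla_v$ give the favorable contributions, while differentiating the magnetic transport $(v\wedge B_e)\cdot\nabla_v$ produces the commutators $(e_j\wedge B_e)\cdot\nabla_v$ at the $\nabla_v$ level and $(v\wedge\nabla_x B_e)\cdot\nabla_v$ at the $\nabla_x$ level. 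Controlling these by the weighted dissipation is exactly what forces the lower bound on $k$ in Hypothesis \ref{hyp 2} through $\|B_e\|_{L^\infty}$ and $\tfrac12\|\nabla_x B_e\|_{L^\infty}$. Tracking, for each $p$ and each derivative, the competition between the weight growth in $\langle v\rangle$ and the diffusive gain produces the explicit thresholds $a_{m,1}^i,a_{m,2}^i$ of the referenced displays.

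Next I would verify the two remaining structural hypotheses of the abstract enlargement theorem: that $\mathcal{A}\colon \tilde{W}^{1,p}(m)\to H^1(dxd\mu)$ is bounded, which is immediate since $\chi_R$ truncates $v$ to a compact set on which the polynomial weight $m$ and the Gaussian weight are comparable and the derivatives of $\chi_R$ are bounded; and that some iterate of the convolution $(\mathcal{A}S_{\mathcal{B}})^{(*n)}(t)$ maps $\mathcal{E}$ into $E$ with a bound $Ce^{at}$. Combining the hypodissipativity of $\mathcal{B}$ with these two facts, the abstract theorem of \cite{gualdani2010factorization} transfers the spectral picture in $E$ — the simple eigenvalue $0$ with projection $F\mapsto\mu\langle F\rangle$ and a spectral gap below $-\kappa$ supplied by Theorem \ref{thm 4} — to the large space, yielding \eqref{eg 8} on the complement of $\mathrm{span}(\mu)$ for any $a>\max(a_{m,1}^i,a_{m,2}^i,-\kappa)$.

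The main obstacle I anticipate is the Sobolev-level hypodissipativity of $\mathcal{B}$, especially the $\nabla_x$ estimate. At that level there is no diffusive damping: differentiating in $x$ leaves $-v\cdot\nabla_x$ as pure measure-preserving transport, so the decay must come entirely from the weight/drift interaction reinforced by the cutoff multiplier, while the magnetic term contributes the commutator $(v\wedge\nabla_x B_e)\cdot\nabla_v$, linear in $v$, which has to be absorbed by enlarging $k$. Getting these commutator constants sharp, and balancing the diffusive gain against the weight growth at each derivative level to produce the stated rates $a_{m,j}^i$ uniformly in $p\in[1,2]$, is where the real work lies.
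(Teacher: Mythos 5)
Your overall architecture coincides with the paper's: the same splitting $\mathcal{L}_{0}=\mathcal{A}+\mathcal{B}$ with $\mathcal{A}=M\chi_{R}$, the same commutators ($B_{e}\wedge\nabla_{v}$ at the $\nabla_{v}$ level, $(v\wedge\nabla_{x}B_{e})\cdot\nabla_{v}$ at the $\nabla_{x}$ level) driving the hypodissipativity of $\mathcal{B}$ on $\tilde{W}^{1,p}(m)$ and producing the thresholds of Hypothesis \ref{hyp 2}, and the same abstract enlargement theorem (Theorem \ref{thm 9}) seeded with the small-space decay of Theorem \ref{thm 4}. Two cosmetic differences: the paper establishes the $\tilde{W}^{1,p}$ dissipativity only at the endpoints $p=1$ and $p=2$ and then uses Riesz--Thorin interpolation, and it takes $E=H^{1}(m_{0})$ with $m_{0}=\mu^{-1/2}$, the conjugate formulation of your $H^{1}(dxd\mu)$. (Incidentally, your remark that the $\nabla_{x}$ level has ``no diffusive damping'' is slightly off: applying $\mathcal{B}$ to $\nabla_{x}F$ retains the velocity diffusion, and the resulting term $-\iint\vert\nabla_{v}\nabla_{x}F\vert^{2}m^{2}\,dxdv$ is exactly what absorbs the magnetic commutator via Cauchy--Schwarz; only the decay \emph{rate} of $\Vert\nabla_{x}F\Vert$ comes from $\Psi_{m,2}-M\chi_{R}$.)

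There is, however, a genuine gap at hypothesis $(H_{3})$, hidden in your claim that the boundedness of $\mathcal{A}\colon\tilde{W}^{1,p}(m)\to H^{1}(dxd\mu)$ is ``immediate'' because $\chi_{R}$ truncates to a compact set. The truncation makes the polynomial and Gaussian weights comparable, but it gains no integrability: for $p\in[1,2)$ a compactly supported $W^{1,p}$ function need not belong to $H^{1}$, so $M\chi_{R}$ does \emph{not} map the large space into the small one, and consequently no iterate $(\mathcal{A}S_{\mathcal{B}})^{(*n)}$ lands in $E$ merely because $\mathcal{A}$ is a nice multiplier. The jump from exponent $p$ to exponent $2$ must come from smoothing properties of $S_{\mathcal{B}}$ itself, and this is precisely the substantial ingredient your sketch omits (you instead locate ``the real work'' in the dissipativity estimates). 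The paper proves it in Lemmas \ref{lem 11} and \ref{lem 13}: an ultracontractive bound
$\Vert S_{\mathcal{B}}(t)F_{0}\Vert_{\tilde{W}^{1,q}(m_{0})}\leq Ce^{bt}\,t^{-(3d+4)(\frac{1}{p}-\frac{1}{q})}\Vert F_{0}\Vert_{\tilde{W}^{1,p}(m_{0})}$,
obtained from a H\'erau-type time-weighted functional $\mathcal{H}(t,h)=B\Vert h\Vert_{L^{1}(m_{0})}^{2}+t^{r}\mathcal{G}(t,h)$ (with $t$-, $t^{2}$-, $t^{3}$-weighted derivative terms) combined with the Nash inequality, then Riesz--Thorin and a duality argument for $S_{\mathcal{B}^{*}}$. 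Corollary \ref{cor 2} converts this into bounds on $\mathcal{A}S_{\mathcal{B}}(t)$ and $S_{\mathcal{B}}(t)\mathcal{A}$ with an integrable singularity $t^{-\Theta}$, and Lemma \ref{lem 8} then manufactures the iterate $T_{n}$ satisfying $(H_{3})$. Without this regularization mechanism your argument only closes in the case $p=2$.
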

We will end this part by a brief review of the literature related to the analysis of kinetic PDEs  using hypocoercivity methods. In some studies \cite{herau2004isotropic, nier2005hypoelliptic,villani2006hypocoercive, dric2009hypocoercivity}, the treated hypocoercivity method is very close to that of hypoellipticity following the method of Kohn, which deals simultaneously with regularity properties and trend to the equilibrium.

The hypocoercive results were developped for simple models in \cite{dolbeault2013hypocoercivity, herau2006hypocoercivity, mouhot2006quantitative, dric2009hypocoercivity}, the methods used were close in spirit to the ones developed in Guo \cite{guo2002vlasov,guo2003vlasov} in functional spaces with exponential weights.

In recent years, the theory of factorization and enlargement of Banach spaces was introduced in \cite{gualdani2010factorization} and \cite{mischler2016exponential}. This theory allows us to extend hypocoercivity results into much larger spaces with polynomial weights. We refer for example to \cite{bouin2017hypocoercivity} and \cite{mischler2016exponential}, where the authors show, using a factorization argument, the return to equilibrium with an exponential decay rate for the Fokker-Planck equation with an external electrical potential, or \cite{herau2017cauchy} for the inhomogeneous Boltzmann equation  without angular cutoff case.\\

We conclude this sectio with some comments on our result. For the proof of Theorem \ref{thm 2} , we follow the micro-macro method proposed in \cite{herau2017introduction}. Note that for the proof of Theorem \ref{thm 2}, the black box method proposed in \cite{dolbeault2013hypocoercivity} (see also \cite{bouin2017hypocoercivity}) could perhaps be employed, anyway the presence of the Magnetic field induces same difficulties. 
To prove Theorem \ref{thm 5} and \ref{thm 6}, we apply the abstract theorem of enlargement from \cite{gualdani2010factorization, mischler2016exponential} to our Fokker-Planck-Magnetic linear operator. We deduce the semi-group estimates of Theorem \ref{thm 2} on large spaces like $L^p(\langle v\rangle^k)$ and $\tilde{W}^{1,p}(\langle v\rangle^k)$ with $p\in [1,2]$.

We hope that this first work will help in future investigations of non-linear problems like the Vlasov-Poisson-Fokker-Planck or Vlasov-Maxwell-Fokker-Planck equations (see \cite{guo2002vlasov, herau2016global} and \cite{ guo2003vlasov, yang2010global}).

~\par\textbf{Plan of the paper: } This article is organized as follows. In Section 2, we prove that the Fokker-Planck-magnetic operator $ P_1 $ is a generator of a strongly continuous semi-group over the space $ L^2 (dxd\mu) $. In section 3, we show hypocoercivity in the weighted spaces $ L^2 $ and $ H^1 $ with an exponential weight. Finally, section 4 is devoted to the proofs of Theorems \ref{thm 5} and \ref{thm 6} with factorization and  enlargement of the functional space arguments.

\section{Study of the operator $P_{1}$}
In this part, we show that the problem \eqref{5} is well-posed in the space $ L^2 (dxd  \mu) $ in the sense of semi-groups. By  the Hille-Yosida Theorem, it is sufficient to show that $ P_{1} $ is maximal accretive in the space $ L^2 (dxd \mu) $.
\begin{nota}\label{remark 1}
We define $ P_{0} $ by
$$P_{0}=  v \cdot \nabla_{x}  - (v \wedge B_{e})\cdot \nabla_{v}- \nabla_{v}\cdot (\nabla_{v} + v).$$
The perturbation of the Cauchy  problem \eqref{1} reduces the study of the operator $ P_{1} $  defined in \eqref{18} to the study of $P_0$, since $ P_{1} $ is obtained via a conjugation of the operator $ P_{0} $ by the function $ \mu $, that is to say
$$P_{1}u=(\mu^{-1}\,P_{0}\,\mu) u\quad \forall u\in D(P_{1}). $$
Similarly, we can define the operator $ P_{\theta} $ as the conjugation of the operator $ P_{1} $ by the function $ \mu^{\theta} $ with $ \theta \in ] 0,1] $. Note that any result on the operator $ P_{\theta} $  is also true on the operator $ P_{1} $ in the corresponding conjugated space.
\end{nota}
We will work in this section on operator $ P_{1/2} $ which is defined by
$$ P_{1/2}:=v\cdot \nabla_{x} - (v \wedge B_{e})\cdot \nabla_{v} + (-\nabla_{v} + \frac{v}{2})\cdot(\nabla_{v} + \frac{v}{2}) \\  = X_{0} + b^{*}b,$$
here $b=(\nabla_{v}+\frac{v}{2})$ and $X_0$ is defined in \eqref{18bis}.
We now show that  operator $ P_{1/2} $ is maximal accretive in the space $ L^2 (dxdv) $ and note that this gives the same result for $ P_{1} $ in the space $ L^2 (dxd \mu) $. We study the following problem:
\begin{equation}
\begin{cases}
\partial_{t}u + P_{1/2}u=0 \\
u(0,x,v)=u_{0}(x,v).
\end{cases}
\end{equation}

\begin{prop}\label{prop 2}
Suppose that $ B_{e} $ satisfies the Hypothesis \ref{hyp 1}. Then the closure with respect to the norm $L^2(\mathbb{T}^{3}\times\mathbb{R}^{3})$ of the magnetic-Fokker-Planck operator $\overline{P}_{1/2}$ on the space $C^{\infty}_{0}(\mathbb{R}^{3} \times \mathbb{T}^{3})$ is maximally accretive.
\begin{proof}
We adapt here the proof given in \cite[page 44]{nier2005hypoelliptic}. We apply the abstract criterion by taking $H=L^{2}( \mathbb{T }^{3} \times \mathbb{R}^{3})$
and the domain of $ P_{1/2} $ defined by $D(P_{1/2})=C^{\infty}_{0}( \mathbb{T}^{3}\times \mathbb{R}^{3}). $ First, we show the accretivity of the  operator $ P_{1/2} $. 
When $ u \in D(P_{1/2}) $, we have to show that $\langle P_{1/2}u,u \rangle\geq 0 $. Indeed,
\begin{align*}
\langle\ P_{1/2}u,u \rangle & = \langle v\cdot \nabla_{x} u- (v \wedge B_{e})\cdot \nabla_{v}u + (-\nabla_{v} + \frac{v}{2})(\nabla_{v} + \frac{v}{2})u,u \rangle \\
&= \underbrace{\iint v\cdot \nabla_{x}u\times u \,dx dv }_{=0}-\underbrace{ \iint (v \wedge B_{e})\cdot \nabla_{v}u\times u \,dx dv }_{=0}+ \langle b^{*}b u,u \rangle \\
&=\Vert bu \Vert^{2}\\
&\geq 0,
\end{align*}
since  operators $ (v \wedge B_{e}) \cdot \nabla_{v} $ and $ v\cdot \nabla_{x} $ are skew-adjoint see Lemma \ref{lem 14}.
~\par Let us now show that there exists $ \lambda_{0}> 0 $ such that the operator $$ T = P_{1/2} + \lambda_{0} Id $$ has dense image in $ H $. We take $ \lambda_{0} = \frac{3}{2} +1 $ (following \cite{nier2005hypoelliptic}). 
 Let $u \in H$ satisfy
\begin{align}
\langle u,(P_{1/2} + \lambda_{0} Id)h \rangle =0, \quad \forall h \in D(P_{1/2}).
\label{6}
\end{align}
We have to show that $ u = 0 $. 

First, we observe that equality \eqref{6} implies that
$$ (- \Delta_{v}+ \dfrac{v^{2}}{4} + 1 - X_{0}) u =0,  \text{ in } \mathcal{D'}(\mathbb{R}^{3} \times \mathbb{T}^{3}).$$
Under Hypothesis \ref{hyp 1}, and following Hormander \cite{hormander1967hypoelliptic, hormander1985analysis} or Helffer-Nier \cite[Chapter 8]{nier2005hypoelliptic}, operator $ - \Delta_{v} + \dfrac{v^{2}}{4} + 1 - X_{0} $ is hypoelliptic, so  $u \in C^{\infty}(\mathbb{R}^{3}\times \mathbb{T}^{3})$.\\
Now we introduce the family of truncation functions
$\xi _{k}$ indexed by $k\in \mathbb{N}^{*}$ and defined by \[ \xi_{k} (v):=\xi(\frac{v}{k})\quad \forall k\in \mathbb{N}^*,\]
where $ \xi $ is a $C^{\infty}$ function satisfying  $ 0\leq \xi \leq 1 $, $\xi =1 $ on $B(0,1)$, and $\operatorname{Supp} \xi \subset B(0,2).$\\
For all $ u,w \in C^{\infty}(\mathbb{T}^{3}\times \mathbb{R}^{3})$, we have
\begin{align*}
 \iint & \nabla_{v}(\xi_{k} u)\cdot \nabla_{v}(\xi_{k} w) \,dxdv + \iint \xi_{k}^{2}(\frac{v^{2}}{4} + 1 ) wu \,dxdv + \iint u X_{0}(\xi_{k}^{2} w)\, dxdv \\
= \iint & \vert \nabla_{v} \xi_{k}\vert^{2} w u \,dx dv + \iint (u \nabla_{v}w - w \nabla_{v}u)\cdot \xi_{k} \nabla_{v}\xi_{k} \,dxdv + \langle u, T(\xi^{2}_{k} w)  \rangle.
\end{align*}
When $u$ satisfies \eqref{6} in particular, when  $h=\xi_{k}^{2}w$ , we get for all $ w \in C^{\infty}$
\begin{align*}
 \iint & \nabla_{v}(\xi_{k} u)\cdot \nabla_{v}(\xi_{k} w) dxdv + \iint \xi_{k}^{2}(\frac{v^{2}}{4} + 1 ) w u dxdv + \iint u X_{0}(\xi_{k}^{2} w) dxdv \\
= \iint & \vert \nabla_{v} \xi_{k}\vert^{2} wu dx dv + \iint (u \nabla_{v}w - w \nabla_{v}u)\cdot \xi_{k} \nabla_{v}\xi_{k} dxdv. 
\end{align*}
In particular, we take the test function $ w = u $, so
\begin{align*}
&\langle  \nabla_{v}(\xi_{k} u), \nabla_{v}(\xi_{k} u) \rangle + \iint \xi_{k}^{2}(\frac{v^{2}}{4} + 1 ) u^{2} dxdv + \iint u X_{0}(\xi_{k}^{2} u) dxdv \\
&= \iint \vert \nabla_{v} \xi_{k}\vert^{2} u^{2} dx dv.
\end{align*}
By an integration by parts, we obtain 
\begin{align*}
&\langle  \nabla_{v}(\xi_{k} u), \nabla_{v}(\xi_{k} u) \rangle + \iint \xi_{k}^{2}(\frac{v^{2}}{4} + 1 ) u^{2} dxdv + \iint \xi_{k} u^{2} X_{0}(\xi_{k}) dxdv \\
&= \iint \vert \nabla_{v} \xi_{k}\vert^{2} u^{2} dx dv.
\end{align*}
Which gives the existence of a constant $ c> 0 $ such that,  for all $ k \in \mathbb{N}^{2}$,
\begin{align*}
\Vert \xi_{k} u \Vert^{2} + \frac{1}{4} &\Vert \xi_{k} v u \Vert^{2}\\
&\leq \frac{c}{k^{2}} \Vert u \Vert^{2}  + \frac{c}{k} \Vert (v \wedge B_{e} )\xi_{k} u \Vert \Vert u \Vert.
\end{align*}
This leads to
\begin{align*}
\Vert \xi_{k} u \Vert^{2} + \frac{1}{8}\Vert \xi_{k} v u \Vert^{2}
\leq c(  \frac{1}{k^{2}}+ \frac{c_{\eta}}{k} \Vert B_{e} \Vert_{\infty}^{2}) \Vert u \Vert^{2}+ \eta \Vert \xi_{k} v u \Vert^{2}.
\end{align*}
Choosing $ \eta  \leq \frac{1}{8} $, we get
\begin{align}
\Vert \xi_{k} u \Vert^{2} \leq c(  \frac{1}{k^2}+ \frac{c_{\eta}}{k} \Vert B_{e} \Vert_{\infty}^{2}) \Vert u \Vert^{2}.
\label{16}
\end{align}
Taking $ k \longrightarrow + \infty $ in \eqref{16},  leads to $ u = 0 $.
\end{proof}
\end{prop}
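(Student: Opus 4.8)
The plan is to verify the two hypotheses of the Lumer--Phillips (Hille--Yosida) criterion for maximal accretivity on $H = L^2(\mathbb{T}^3 \times \mathbb{R}^3)$ with core $C_0^\infty$: first accretivity of $P_{1/2}$, and then density of the range of $P_{1/2} + \lambda_0 I$ for a suitable $\lambda_0 > 0$. Accretivity is the easy half. Writing $P_{1/2} = X_0 + b^*b$ with $b = \nabla_v + \tfrac{v}{2}$, I would use that the transport field $v\cdot\nabla_x$ and the magnetic field $(v\wedge B_e)\cdot\nabla_v$ are divergence-free, hence skew-adjoint on $L^2(dxdv)$ (cf. Lemma \ref{lem 14}); thus $\langle X_0 u,u\rangle = 0$ and $\langle P_{1/2}u,u\rangle = \|bu\|^2 \geq 0$ for all $u\in C_0^\infty$.

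The substance of the proof is maximality. I would fix $\lambda_0 = \tfrac32 + 1$, chosen so that $b^*b + \lambda_0 = -\Delta_v + \tfrac{v^2}{4} + 1$ (since $b^*b = -\Delta_v + \tfrac{v^2}{4} - \tfrac32$), and suppose $u\in H$ is orthogonal to the range of $T = P_{1/2}+\lambda_0$, the goal being $u=0$. Testing the orthogonality relation against every $h\in C_0^\infty$ shows that $u$ is a distributional solution of $(-\Delta_v + \tfrac{v^2}{4} + 1 - X_0)u = 0$. The first key input is hypoellipticity: the velocity fields $\partial_{v_i}$ together with the brackets $[\,v\cdot\nabla_x,\partial_{v_i}\,] = -\partial_{x_i}$ span the whole tangent space, so Hörmander's condition holds and the operator is hypoelliptic; consequently $u\in C^\infty$, which legitimizes using (a localization of) $u$ itself as a test function.

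The heart of the matter is the a priori estimate obtained by inserting $h = \xi_k^2 u$, where $\xi_k(v)=\xi(v/k)$ is a velocity cutoff. After integration by parts, the localized Dirichlet energy and the confining term $\tfrac14\|\xi_k v u\|^2$ appear on the left, while the right-hand side collects only commutator contributions: a gradient term of size $O(k^{-2})\|u\|^2$ from $|\nabla_v\xi_k|^2$, and a magnetic term $O(k^{-1})\,\|(v\wedge B_e)\xi_k u\|\,\|u\|$ coming from $X_0(\xi_k)$. I expect this last term to be the main obstacle, because $|v\wedge B_e|$ grows linearly in $v$ and is not a priori controlled. The remedy is to exploit the confining term already present on the left: by Young's inequality with a small parameter $\eta\leq\tfrac18$, the factor $\|\xi_k v u\|$ hidden in the magnetic contribution can be absorbed, leaving a remainder of order $k^{-1}\|B_e\|_\infty^2\|u\|^2$.

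This produces the estimate $\|\xi_k u\|^2 \leq c\big(k^{-2} + c_\eta k^{-1}\|B_e\|_\infty^2\big)\|u\|^2$. Letting $k\to\infty$, the right-hand side vanishes while $\xi_k u \to u$ in $L^2$ by dominated convergence, which forces $u=0$. Hence $T$ has dense range, and together with accretivity this shows that the closure $\overline{P}_{1/2}$ is maximally accretive; the same conclusion then transfers to $P_1$ on $L^2(dxd\mu)$ by the conjugation described in Notation \ref{remark 1}.
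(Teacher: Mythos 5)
Your proposal is correct and follows essentially the same route as the paper: accretivity via skew-adjointness of $v\cdot\nabla_x$ and $(v\wedge B_e)\cdot\nabla_v$, the choice $\lambda_0=\tfrac32+1$ so that $b^*b+\lambda_0=-\Delta_v+\tfrac{v^2}{4}+1$, hypoellipticity to upgrade the orthogonal element $u$ to $C^\infty$, and the cutoff $\xi_k(v)=\xi(v/k)$ with the magnetic commutator term absorbed by Young's inequality into $\tfrac14\Vert \xi_k v u\Vert^2$. Your added remark that the bracket $[\,v\cdot\nabla_x,\partial_{v_i}\,]=-\partial_{x_i}$ verifies H\"ormander's condition is a welcome explicit justification of the hypoellipticity step the paper only cites.
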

\begin{proof}[Proof of Theorem \ref{thm 1}]
According to  Remark \ref{remark 1}, the operator $ P_{1} $ has a closure $ \overline{P_{1}} $ from $ C^{\infty}_0 (\mathbb{T}^{3}\times \mathbb{R}^3 ) $. This gives Theorem \ref{thm 1}, by a direct application of Hille-Yosida's theorem (cf.\ \cite{pazy2012semigroups} for more details for the semi-group theory) to the problem \eqref{2}, with $ D ( P_{1}) = C^{\infty}_0 (\mathbb{T}^{3}\times \mathbb{R}^3 )$ and $ H = L^{2} (dx d \mu) $.
\end{proof}
From now on, we write $ P_{\theta} $ for the closure of the operator   $ P_{\theta} $  from the space $ C^{\infty}_0 (\mathbb{T}^{3} \times \mathbb{R}^3) $ with respect the norm $L^2(\mathbb{T}^{3}\times \mathbb{R}^{3})$.
\section{Trend to the equilibrium}
\subsection{Hypocoercivity in the space $ L^{2} (dxd \mu) $ }
The purpose of this subsection is to show the exponential time decay of the $ L^{2} (dxd \mu) $ entropy for $ P_1 $, based on macroscopic equations.
First, we try to find the macroscopic equations associated with system \eqref{2}. We write  $ f $ in the following form:
\begin{align}
f(x,v) = r(x) + h(x,v) ,
\label{19}
\end{align} 
where \quad $r(f)(x)= \displaystyle\int f(x,v)\, d\mu (v)$  and   $m(f)(x)= \displaystyle\int v f(x,v) \, d\mu (v)$ will be use later.
\begin{defi}
In the following, we define
$$\Lambda_{x}=(1-\Delta_{x})^{1/2}, $$
and introduce a class of Hilbert spaces
$$\mathbb{H}^{\alpha}:=\{u\in \mathcal{S}^{'}, \Lambda_{x}^{\alpha}\in L^{2}(dxd\mu)  \}\quad \text{ with } \alpha \in \mathbb{R},$$
where $\mathcal{S}^{'}$ is the space of temperate distributions.

We recall that the operator $ \Lambda_{x}^2 $ is an elliptic, self-adjoint, invertible operator from $\mathbb{H}^{2}(dxd\mu)$ to $L^2 (dxd\mu)$ and $\Lambda_{x}\geq  Id$. (cf \cite[section 6]{herau2004isotropic} for a proof of these properties).
\end{defi}
\begin{lem}\label{lem 1}
Let $f $ be the solution of the system \eqref{2}, with the decomposition given in  \eqref{19}. Then we have
\begin{align}
& \partial_{t} r = Op_{1}(h), \\
& \partial_{t}m = - \nabla_{x} r -m \wedge B_{e} + Op_{1}(h).
\label{20}
\end{align}
Where $ Op_{1} $ denotes a bounded generic operator of $ L^{2} $ to $ \mathbb{H}^{- 1} $.
\end{lem}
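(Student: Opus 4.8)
The plan is to run the standard micro--macro procedure: project the perturbed equation \eqref{2}, written as $\partial_t f + v\cdot\nabla_x f - (v\wedge B_e)\cdot\nabla_v f + Lf = 0$ with $L=(-\nabla_v+v)\cdot\nabla_v$, onto the two lowest velocity modes by integrating against $d\mu$ and against $v_j\,d\mu$ for $j=1,2,3$. The computation rests on three facts that I will use without further comment: the Gaussian moment identities $\int d\mu=1$, $\int v_j\,d\mu=0$, $\int v_iv_j\,d\mu=\delta_{ij}$; the self-adjointness of the Ornstein--Uhlenbeck operator $L$ on $L^2(d\mu)$ together with $L1=0$ and $Lv_j=v_j$; and the observation that, since $r$ is independent of $v$ and $\int v\,d\mu=0$, the momentum sees only the microscopic part, $m=m(f)=m(h)$.

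For the first equation I would integrate \eqref{2} against $d\mu$. The time term gives $\partial_t r$, the transport term gives $\int v\cdot\nabla_x f\,d\mu=\nabla_x\cdot m$, and the collision term vanishes because $\int Lf\,d\mu=\langle f,L1\rangle_{d\mu}=0$. The magnetic term is the one requiring care: integrating by parts in $v$,
\[
\int (v\wedge B_e)\cdot\nabla_v f\,d\mu=-\int f\,\nabla_v\cdot\big[(v\wedge B_e)\mu\big]\,dv,
\]
and since $v\wedge B_e$ is divergence-free in $v$ (so $\nabla_v\cdot(v\wedge B_e)=0$) and orthogonal to $v$ (so $(v\wedge B_e)\cdot\nabla_v\mu=-(v\wedge B_e)\cdot v\,\mu=0$), this term vanishes identically. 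Hence $\partial_t r=-\nabla_x\cdot m(h)$; since $h\mapsto m(h)$ is bounded from $L^2(dxd\mu)$ into $L^2(dx)$ by finiteness of the Gaussian moments, and $\nabla_x\cdot$ maps $L^2(dx)$ into $\mathbb{H}^{-1}$, this is of the form $Op_{1}(h)$.

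For the second equation I would multiply \eqref{2} by $v_j$ and integrate against $d\mu$. The transport term gives $\sum_i\partial_{x_i}\int v_iv_j f\,d\mu=\partial_{x_j}r+\sum_i\partial_{x_i}\int v_iv_jh\,d\mu$, where the first piece uses $\int v_iv_j r\,d\mu=r\delta_{ij}$ and the remainder is a once-$x$-differentiated second $v$-moment of $h$, hence $Op_{1}(h)$. The magnetic term, after the same integration by parts now with the extra factor $v_j$ and using $\nabla_v\cdot(v_j(v\wedge B_e)\mu)=(v\wedge B_e)_j\mu$, evaluates to $\int v_j(v\wedge B_e)\cdot\nabla_v f\,d\mu=-(m\wedge B_e)_j$; taking into account the sign with which it enters \eqref{2}, it contributes the term $-m\wedge B_e$. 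Finally, by self-adjointness and $Lv_j=v_j$, the collision term gives $\int v_j Lf\,d\mu=\langle f,v_j\rangle_{d\mu}=m_j(h)$, again of the form $Op_{1}(h)$. Collecting signs yields $\partial_t m=-\nabla_x r-m\wedge B_e+Op_{1}(h)$.

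The main obstacle is not the bookkeeping but the magnetic contribution: the two cancellations $\nabla_v\cdot(v\wedge B_e)=0$ and $(v\wedge B_e)\cdot v=0$ are exactly what make the Lorentz force contribute nothing to the density balance and precisely $-m\wedge B_e$ to the momentum balance, with no stray coupling to higher velocity moments of $h$. The remaining point of care is to verify that each remainder is genuinely bounded from $L^2$ into $\mathbb{H}^{-1}$; this follows from the finiteness of the relevant Gaussian moments of $h$ together with the boundedness of $\Lambda_x^{-1}\partial_{x_i}$ on $L^2(dxd\mu)$, which is where the single lost $x$-derivative is absorbed.
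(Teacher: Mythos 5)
Your proof is correct and follows essentially the same route as the paper: you project \eqref{2} onto $d\mu$ and $v_j\,d\mu$, use $L1=0$, $Lv_j=v_j$ and the self-adjointness of $L$, and exploit the two identities $\nabla_v\cdot(v\wedge B_e)=0$ and $(v\wedge B_e)\cdot v=0$ to obtain the vanishing of the Lorentz term in the density balance and the exact contribution $-m\wedge B_e$ in the momentum balance, exactly as in the paper's computation. Your explicit check that each remainder is bounded from $L^{2}$ to $\mathbb{H}^{-1}$ (finite Gaussian moments plus boundedness of $\Lambda_x^{-1}\partial_{x_i}$) is a detail the paper leaves implicit, but it is not a different method.
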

\begin{proof}
We suppose is $f$ is a Schwarz function. In order to show  equation $ (14) $, we integrate  equation \eqref{2} with respect to the measure $d\mu$ . We get
\begin{align*}
\partial_{t} \int f d\mu + \int v \cdot \nabla_{x} f d\mu & - \int (v \wedge B_{e}) \cdot \nabla_{v}f d\mu  = - \int (-\nabla_{v} +v )\cdot\nabla_{v} f d\mu \\ &=\langle Lf, 1 \rangle = \langle f, L 1\rangle = 0,
\end{align*}
since, $ L 1 = 0 $ ,  $ L $ is a self-adjoint operator and 
\[ (v \wedge B_{e}) \cdot \nabla_{v}f = \nabla_{v}\cdot (v \wedge B_{e} )f.\]
Then, we obtain 
\begin{align*}
\partial_{t} r = \nabla_{x} \cdot \int v h d\mu = Op_{1}(h),
\end{align*}
hence  equality $ (14). $ \\
To show $ (15) $, we multiply  equation \eqref{2} by $v$ before performing an integration with respect to the measure $d\mu$, we obteinning
\begin{align}\label{eg 20}
\partial_{t} \int v f d\mu + \nabla_{x}\cdot \int  (v\otimes v)f\, d\mu - \int v ((v \wedge B_{e}) \cdot \nabla_{v}f ) d\mu = \langle Lf, v \rangle ,
\end{align}
where $\nabla_{x}\int (v\otimes v) f\,d\mu =\sum_{i=1}^{3}\sum_{j=1}^{3}\,\int\, v_{j}v_{i}\partial_{x_{i}}f\,d\mu $. 
Now, we will calculate term by term  the left-hand side of the equality \eqref{eg 20}. We first observe that
$$\nabla_{x}\cdot \int\,(v\otimes v)f\,d\mu= Op_{1}(h)+\nabla_{x}r.$$
Furethermore,
\begin{align*}
\langle Lf, v \rangle = \langle  f, Lv \rangle = \langle f,v \rangle = \int h vd\mu.
\end{align*}
It remains to compute component by component  $ \displaystyle\int v \left( (v\wedge B_{e} \cdot \nabla_{v} f )\right) d\mu  $. We have for all $ 1\leq j \leq 3$,
\begin{align*}
\int v_{j} ((v \wedge B_{e} )\cdot \nabla_{v} f ) d\mu &= \int v_{j} \nabla_{v}\cdot ((v\wedge B_{e})f) d\mu \\
& = \int (-\nabla_{v}+ v)(v_{j} )\cdot (v\wedge B_{e} ) f d\mu \\
& = - \delta_{j} \cdot\int (v\wedge B_{e})f d\mu \\
& = - \delta_{j} \cdot \left( \int v f d\mu \right)\wedge B_{e}  \\
& = - \delta_{j}\cdot (m \wedge B_{e}) \\
& = - (m\wedge B_{e})_{j}.
\end{align*}
Therefore  $\displaystyle\int v \left( (v\wedge B_{e} \cdot \nabla_{v} f )\right) d\mu = - m \wedge B_{e}$, where $m$ is defined in \eqref{19}.\\
By combining all the previous equalities in \eqref{eg 20}, we obtain
\begin{align*}
 \partial_{t}m = - \nabla_{x}r - m \wedge B_{e} + Op_{1}(h).
\end{align*}
\end{proof}
\begin{remarque}
Under Hypothesis \ref{hyp 1}, since $ B_{e} \in  L^{\infty} (\mathbb{T}^{3}) $,
$$ m \wedge B_{e} = Op_{1} (h), $$
so the macroscopic equation \eqref{19} takes the following form:
\begin{align*}
\partial_{t} m = - \nabla_{x} r + Op_{1} (h).
\end{align*}
\end{remarque}
Now we are ready to build a new entropy, defined for any $u\in L^{2}(dxd\mu)$ by
$$ \mathcal{F}_{\varepsilon}(u)= \Vert u \Vert^{2} + \varepsilon \langle \Lambda_{x}^{-2} \nabla_{x}r(u), m(u) \rangle, \quad r(u):=\int \, u\,d\mu \text{ and } m(u):=\int vu\,d\mu .$$
Using the Cauchy-Schwarz inequality gives us  directly that 
\begin{lem}\label{lem 2}
If $ \varepsilon \leq \frac{1}{2}$, then 
\begin{align}\label{ineg 16}
 \frac{1}{2} \Vert u \Vert^{2}  \leq \mathcal{ F}_{\varepsilon}(u) \leq  2 \Vert u\Vert^{2} 
 \end{align}
 \end{lem}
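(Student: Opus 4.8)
The plan is to treat $\mathcal{F}_{\varepsilon}(u)=\Vert u\Vert^{2}+\varepsilon\langle\Lambda_{x}^{-2}\nabla_{x}r(u),m(u)\rangle$ as a perturbation of $\Vert u\Vert^{2}$ by the bilinear cross term, and to show that this cross term is controlled by a fixed small multiple of $\Vert u\Vert^{2}$. Once I have an estimate of the form $\varepsilon\,|\langle\Lambda_{x}^{-2}\nabla_{x}r(u),m(u)\rangle|\le C\varepsilon\,\Vert u\Vert^{2}$ with $C\varepsilon\le\tfrac12$, both inequalities follow immediately by writing $\mathcal{F}_{\varepsilon}(u)\ge\Vert u\Vert^{2}-C\varepsilon\,\Vert u\Vert^{2}$ and $\mathcal{F}_{\varepsilon}(u)\le\Vert u\Vert^{2}+C\varepsilon\,\Vert u\Vert^{2}$. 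So the entire content is the control of the cross term, and the hypothesis $\varepsilon\le\tfrac12$ is there precisely to make $C\varepsilon$ small enough.

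For the cross term I first apply the Cauchy--Schwarz inequality in $L^{2}(\mathbb{T}^{3})$ (the quantities $r(u)$, $\nabla_{x}r(u)$ and $m(u)$ depend on $x$ only, and integration against $d\mu$ contributes the factor $\int d\mu=1$), obtaining $|\langle\Lambda_{x}^{-2}\nabla_{x}r(u),m(u)\rangle|\le\Vert\Lambda_{x}^{-2}\nabla_{x}r(u)\Vert\,\Vert m(u)\Vert$. I then estimate the two factors separately. For the first, I work in Fourier series on $\mathbb{T}^{3}$: the operator $\Lambda_{x}^{-2}\nabla_{x}$ has symbol $ik/(1+|k|^{2})$, whose modulus $|k|/(1+|k|^{2})$ is $\le\tfrac12$ by the elementary bound $2|k|\le 1+|k|^{2}$, so by Plancherel $\Vert\Lambda_{x}^{-2}\nabla_{x}r(u)\Vert\le\tfrac12\Vert r(u)\Vert$. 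For the moments, I use that $1,v_{1},v_{2},v_{3}$ are orthonormal in $L^{2}(d\mu)$ (indeed $\int d\mu=1$, $\int v_{j}\,d\mu=0$, $\int v_{i}v_{j}\,d\mu=\delta_{ij}$); hence $r(u)(x)+\sum_{j}m_{j}(u)(x)\,v_{j}$ is the orthogonal projection of $u(x,\cdot)$ onto $\mathrm{span}\{1,v_{1},v_{2},v_{3}\}$, and Bessel's inequality followed by integration in $x$ yields $\Vert r(u)\Vert^{2}+\Vert m(u)\Vert^{2}\le\Vert u\Vert^{2}$.

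Combining these bounds with Young's inequality gives
\begin{align*}
\big|\langle\Lambda_{x}^{-2}\nabla_{x}r(u),m(u)\rangle\big|\le\tfrac12\Vert r(u)\Vert\,\Vert m(u)\Vert\le\tfrac14\big(\Vert r(u)\Vert^{2}+\Vert m(u)\Vert^{2}\big)\le\tfrac14\Vert u\Vert^{2},
\end{align*}
so that $\varepsilon\,|\langle\Lambda_{x}^{-2}\nabla_{x}r(u),m(u)\rangle|\le\tfrac18\Vert u\Vert^{2}$ as soon as $\varepsilon\le\tfrac12$. This immediately yields $\tfrac78\Vert u\Vert^{2}\le\mathcal{F}_{\varepsilon}(u)\le\tfrac98\Vert u\Vert^{2}$, which in particular implies the stated \eqref{ineg 16}.

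There is no serious obstacle here; the estimate is essentially forced. The only point requiring a little care is to avoid being wasteful in the constants: a crude bound such as $\Vert m(u)\Vert\le\sqrt3\,\Vert u\Vert$ (estimating each component separately via $\int v_{j}^{2}\,d\mu=1$) combined with a symbol bound of $1$ rather than $\tfrac12$ for $\Lambda_{x}^{-2}\nabla_{x}$ would produce a cross-term constant too large to preserve the lower bound $\tfrac12\Vert u\Vert^{2}$. Using instead the Pythagorean identity for the moments together with the sharp factor $\tfrac12$ leaves comfortable room, as the computation above shows.
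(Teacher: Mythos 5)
Your proof is correct, and it follows the same route the paper takes: the paper disposes of this lemma in one line by invoking the Cauchy--Schwarz inequality, and your argument is precisely that perturbation estimate carried out in full, supplying the details left implicit (the symbol bound $|k|/(1+|k|^{2})\le\tfrac12$ for $\Lambda_{x}^{-2}\nabla_{x}$ and the Bessel inequality $\Vert r(u)\Vert^{2}+\Vert m(u)\Vert^{2}\le\Vert u\Vert^{2}$ from the orthonormality of $1,v_{1},v_{2},v_{3}$ in $L^{2}(d\mu)$). Your version in fact yields the sharper constants $\tfrac78$ and $\tfrac98$, which comfortably imply the stated bounds.
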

Now, we can prove the main result of hypocoercivity leading to the proof of Theorem \ref{thm 2}.
\begin{prop} \label{prop 3}
There exists $ \kappa > 0 $ such that, if $ f_{0} \in L^{2} (dx d \mu) $ and $ \langle f_{0} \rangle = 0 $,
then the solution of  system \eqref{2} satisfies
$$ \forall t \geq 0, \quad \mathcal{F}_{\varepsilon}(f(t)) \leq e^{-\kappa t} \mathcal{F}_{\varepsilon}(f_{0}).$$
\begin{proof}
We write 
$$ \frac{d}{dt} \mathcal{F}_{\varepsilon}(f(t)) = \frac{d}{dt} \Vert f \Vert^{2} + \varepsilon \frac{d}{dt} \langle \Lambda_{x}^{-2}r, m \rangle .$$
We will omit the dependence of $ f $ with respect to $ t $. For the first term, we notice that
\begin{align}\label{ineg 17}
\frac{d}{dt} \Vert f \Vert^{2} & = 2 \langle L f, f  \rangle 
 = - 2 \Vert \nabla_{v} f \Vert^{2} \leq -2 \Vert h  \Vert^{2},
\end{align} 
by the spectral property of the operator $ L $. For the second term, using the macroscopic equations, we get
\begin{align*}
\frac{d}{dt} \langle \Lambda_{x}^{-2} \nabla_{x}r, m \rangle & = \langle \Lambda_{x}^{-2}\nabla_{x}\partial_{t} r, m \rangle + \langle \Lambda_{x}^{-2} \nabla_{x} r, \partial_{t} m \rangle \\
& = -\langle \Lambda^{-2}_{x} \nabla_{x} r, \nabla_{x} r \rangle +  \langle \Lambda_{x}^{-2}\nabla_{x} Op_{1}(h), m \rangle + \langle \Lambda_{x}^{-2}\nabla_{x}r, Op_{1}(h) \rangle\\
& \leq \Vert \Lambda_{x}^{-1}\nabla_{x} r \Vert^{2} + C \Vert \Lambda_{x}^{-1} Op_{1}(h) \Vert \left( \Vert \Lambda_{x}^{-1}\nabla_{x} r \Vert + \Vert \Lambda_{x}^{-1}\nabla_{x} m \Vert \right) .
\end{align*}
Now, using $\Vert m \Vert \leq \Vert h \Vert$, the  Cauchy-Schwarz inequality and the following estimate:
$$ \Vert \Lambda_{x}^{-1} \nabla_{x} \phi \Vert \leq \Vert \phi \Vert, \quad \forall \phi \in L^{2}(dx d\mu),$$
we obtain
$$ \frac{d}{dt}\langle \Lambda_{x}^{-2}r, m \rangle \leq - \frac{1}{2}  \Vert \Lambda_{x}^{-1} \nabla_{x} r \Vert^{2} + C \Vert h \Vert^{2} .$$
 Poincar\'e's inequality on $ L^{2}(dx) $  takes the form
$$ \forall \phi \in L^{2}(dx),\quad \text{such that}\quad \langle \phi \rangle =0, \quad \Vert \Lambda_{x}^{-1} \nabla_{x} \phi \Vert^{2} \geq \frac{c_{P}}{c_{P} + 1} \Vert \phi \Vert^{2}, $$
where $\langle \phi\rangle =\int \phi (x)\,dx$ and $ c_{P} >0$ is the spectral gap of $ - \Delta_{x} $ on the torus (see \cite[Lemma 2.6]{herau2017introduction} for the proof of the previous inequality). Using this, we obtain, by applying the previous estimate to $ r $ ( since $\langle r \rangle = \langle  f \rangle = \langle f_{0}  \rangle =0 $),
\begin{align}\label{ineg 18}
\frac{d}{dt} \langle \Lambda_{x}^{-2} \nabla_{x}r, m \rangle \leq -\frac{1}{2} \frac{c_{P}}{c_{P} + 1} \Vert r \Vert^{2} + C \Vert h \Vert^{2}.
\end{align}
gathering \eqref{ineg 17} and \eqref{ineg 18}, we get
$$ \frac{d}{dt} \mathcal{F}_{\varepsilon}(f) \leq - \Vert h \Vert^{2} -\frac{\varepsilon}{2} \frac{c_{P}}{c_{P} + 1} \Vert r \Vert^{2} + C\varepsilon \Vert h \Vert^{2}$$
Now we choose $ \varepsilon $ such that $ C \varepsilon \leq \frac{1}{2} $, we get
\begin{align*}
\frac{d}{dt} \mathcal{F}_{ \varepsilon}(f) & \leq   -\frac{1}{2} \Vert h \Vert^{2} -\frac{\varepsilon}{2} \frac{c_{P}}{c_{P} + 1} \Vert r \Vert^{2} \leq -\frac{\varepsilon}{2} \frac{c_{P}}{c_{P} + 1} \Vert f \Vert^{2} \leq -\frac{\varepsilon}{4} \frac{c_{P}}{c_{P} + 1} \mathcal{F}_{\varepsilon}(f). 
\end{align*}
Which gives the result with $\kappa = \frac{\varepsilon}{4} \frac{c_{P}}{c_{P} + 1}>0$.
\end{proof}
\end{prop}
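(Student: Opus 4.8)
The plan is to differentiate the modified entropy $\mathcal{F}_{\varepsilon}(f(t))$ along the flow and to derive a differential inequality of Grönwall type. Writing
$\frac{d}{dt}\mathcal{F}_{\varepsilon}(f) = \frac{d}{dt}\Vert f\Vert^{2} + \varepsilon\,\frac{d}{dt}\langle \Lambda_{x}^{-2}\nabla_{x}r, m\rangle$,
I would estimate the two contributions separately. For the first, since $f$ solves $\partial_{t}f + P_{1}f = 0$ with $P_{1}=X_{0}-L$, and since $X_{0}=v\cdot\nabla_{x}-(v\wedge B_{e})\cdot\nabla_{v}$ is skew-adjoint while $L$ is self-adjoint, the transport and magnetic parts contribute nothing to the energy balance, so $\frac{d}{dt}\Vert f\Vert^{2}=2\langle Lf,f\rangle=-2\Vert\nabla_{v}f\Vert^{2}$. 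The spectral gap of the velocity Ornstein--Uhlenbeck operator (the Poincaré inequality for the Gaussian measure, whose kernel is exactly the span of $r$) then yields $\frac{d}{dt}\Vert f\Vert^{2}\leq -2\Vert h\Vert^{2}$, producing dissipation only in the microscopic part $h=f-r$.

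The heart of the argument is the mixed term, introduced precisely to generate coercivity in the macroscopic density $r$, which lies in $\ker L$ and is therefore invisible to the preceding estimate. Using the macroscopic equations of Lemma \ref{lem 1}, together with the observation (valid under Hypothesis \ref{hyp 1} since $B_{e}\in L^{\infty}$ and $\Vert m\Vert\leq\Vert h\Vert$) that $m\wedge B_{e}=Op_{1}(h)$, I would differentiate $\langle \Lambda_{x}^{-2}\nabla_{x}r, m\rangle$ and isolate the leading term $-\langle \Lambda_{x}^{-2}\nabla_{x}r,\nabla_{x}r\rangle=-\Vert\Lambda_{x}^{-1}\nabla_{x}r\Vert^{2}$. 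Every remaining contribution carries a factor $Op_{1}(h)$ and is bounded by $C\Vert h\Vert^{2}$, using the boundedness of $Op_{1}\colon L^{2}\to\mathbb{H}^{-1}$ and the elementary estimate $\Vert\Lambda_{x}^{-1}\nabla_{x}\phi\Vert\leq\Vert\phi\Vert$. The Poincaré inequality on the torus, applicable because the mean $\langle r\rangle=\langle f_{0}\rangle=0$ is conserved in time, then upgrades $\Vert\Lambda_{x}^{-1}\nabla_{x}r\Vert^{2}$ to genuine control of $\Vert r\Vert^{2}$, giving a term $-\tfrac{1}{2}\tfrac{c_{P}}{c_{P}+1}\Vert r\Vert^{2}$ up to an error $C\Vert h\Vert^{2}$.

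Finally, I would combine the two estimates and choose $\varepsilon$ small enough that the error term $C\varepsilon\Vert h\Vert^{2}$ coming from the mixed part is absorbed by the genuine velocity dissipation $-\Vert h\Vert^{2}$, leaving $\frac{d}{dt}\mathcal{F}_{\varepsilon}(f)\leq -\tfrac{1}{2}\Vert h\Vert^{2}-\tfrac{\varepsilon}{2}\tfrac{c_{P}}{c_{P}+1}\Vert r\Vert^{2}$. Since $\Vert f\Vert^{2}=\Vert r\Vert^{2}+\Vert h\Vert^{2}$ by orthogonality of the decomposition \eqref{19}, the right-hand side dominates $-c\Vert f\Vert^{2}$, and the comparison $\mathcal{F}_{\varepsilon}\leq 2\Vert f\Vert^{2}$ of Lemma \ref{lem 2} turns this into $\frac{d}{dt}\mathcal{F}_{\varepsilon}(f)\leq -\kappa\,\mathcal{F}_{\varepsilon}(f)$, after which Grönwall's lemma gives the claim.

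I expect the main obstacle to be the mixed term: the whole difficulty of hypocoercivity is that $L$ dissipates only in velocity, so coercivity in $x$ must be manufactured through the transport structure, and the delicate point here is to check that the magnetic contribution $m\wedge B_{e}$ does not obstruct this mechanism but can be folded into the harmless $Op_{1}(h)$ remainder. Tracking the constants carefully enough to close the absorption of $C\varepsilon\Vert h\Vert^{2}$ is the only quantitative subtlety.
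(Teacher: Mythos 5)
Your proposal is correct and follows essentially the same route as the paper's proof: the same dissipation identity $\frac{d}{dt}\Vert f\Vert^{2}=-2\Vert\nabla_{v}f\Vert^{2}\leq-2\Vert h\Vert^{2}$, the same use of the macroscopic equations with the magnetic contribution $m\wedge B_{e}$ absorbed into the $Op_{1}(h)$ remainder, the same torus Poincar\'e inequality applied to $r$ (using conservation of the mean), and the same smallness choice of $\varepsilon$ to absorb $C\varepsilon\Vert h\Vert^{2}$ before concluding by Gr\"onwall. The only cosmetic difference is that you state explicitly the orthogonality $\Vert f\Vert^{2}=\Vert r\Vert^{2}+\Vert h\Vert^{2}$, which the paper uses implicitly in its final chain of inequalities.
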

We can deduce the proof of Theorem \ref{thm 2}.
\begin{proof}[Proof of Theorem \ref{thm 2}]
Starting from  Lemma \ref{lem 2} and Proposition \ref{prop 3}, we have, for $ f $  the solution of the system \eqref{2},
$$ \Vert f \Vert^{2} \leq 2 \mathcal{F}_{\varepsilon}(f) \leq 2 e^{-\kappa t} \mathcal{F}_{\varepsilon}(f_{0}) \leq  4 e^{- \kappa t} \Vert f_{0} \Vert^{2}. $$
This completes the proof of Theorem \ref{thm 2}.
\end{proof}

\subsection{Hypocoercivity in the space $H^{1}(dxd\mu)$ }
We will establish some technical lemmas, which will help us to deduce the exponential time decay of the norm $ H^{1}(dxd\mu) $, noting that we work in $ 3 $ dimensions.
~\par 
The following lemma gives the exact values of some commutators will be used later.
\begin{lem} \label{lem 3} The following equalities
\begin{enumerate}
\item $[ \partial_{v_{i}}, v\cdot \nabla_{x} ] = \partial_{x_i} \quad \forall i\in\{1,2,3\}.$ 
\item $[ \partial_{v_i}, (-\partial_{v_j} + v_j)] = \delta_{ij} \quad \forall i,j\in \{ 1,2,3\}.$
\item $[ \nabla_{v}, (v\wedge B_{e})\cdot \nabla_{v} ] = B_{e} \wedge \nabla_{v}$.
\item $[\nabla_{x}, (v\wedge B_{e})\cdot \nabla_{v} ] = (v\wedge \nabla_{x}B_{e})\cdot \nabla_{v}.$
\end{enumerate}
\begin{proof}
Let $f\in C^{\infty}_{0} ( \mathbb{T}^{3} \times\mathbb{R}^{3})$. 
The first two equalities are obvious. We  directly go to the proof of $(3)$ in component. Writting $ B_{e} = (B_{1}, B_{2}, B_{3}) $,
\begin{align*}
[ \partial_{v_{1}}, (v\wedge B_{e})\cdot \nabla_{v} ]f & = \partial_{v_{1}} ((v\wedge B_{e})\cdot \nabla_{v})f - ((v\wedge B_{e})\cdot \nabla_{v})\partial_{v_{1}}f \\
& = \partial_{v_{1}} [  ( v_{2} B_{3} - v_{3} B_{2})\partial_{v_{1}}f + (v_{3} B_{1} - v_{1} B_{3})\partial_{v_{2}}f \\ 
& + (v_{1} B_{2} - v_{2} B_{1})\partial_{v_{3}}f )]
 -  ((v\wedge B_{e})\cdot \nabla_{v})\partial_{1}f \\
 &= (B_{2} \partial_{v_{3}}f - B_{3}\partial_{v_{2}} f ) \\
 &= (B_{e} \wedge \nabla_{v})_{1} f.
\end{align*}
Similarly we can show that, for all  $ 1\leq i \leq 3 $,
$$ [ \partial_{v_{i}}, (v\wedge B_{e})\cdot \nabla_{v} ] =  (B_{e} \wedge \nabla_{v} )_{i} . $$
This proves the equality $(3)$. 
Now, we will show $(4)$,
\begin{align*}
[\nabla_{x}, (v\wedge B_{e})\cdot \nabla_{v} ]f & = \nabla_{x} ((v \wedge B_{e})\cdot \nabla_{v} )f - ((v\wedge B_{e})\cdot\nabla_{v})\nabla_{x} f \\
& =( v \wedge \nabla_{x}B_{e})\cdot \nabla_{v}f.
\end{align*}
\end{proof}
\end{lem}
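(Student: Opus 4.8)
The plan is to prove all four identities by the same elementary recipe: apply each commutator $[A,B]=AB-BA$ to an arbitrary test function $f\in C_0^\infty(\mathbb{T}^3\times\mathbb{R}^3)$, expand with the Leibniz rule in Cartesian coordinates, and observe that the second-order terms, in which both operators differentiate $f$, cancel identically — leaving only the term in which the outer operator differentiates a \emph{coefficient} of the inner operator. No analytic machinery is needed here; the entire content is keeping track of which variable each derivative acts on, together with the structural facts that $v$ and $x$ are independent and that $B_e=B_e(x)$ depends only on $x$.

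For (1) I would write $v\cdot\nabla_x=\sum_j v_j\partial_{x_j}$, so that $\partial_{v_i}(v\cdot\nabla_x f)=\sum_j(\partial_{v_i}v_j)\partial_{x_j}f+\sum_j v_j\partial_{x_j}\partial_{v_i}f=\partial_{x_i}f+(v\cdot\nabla_x)\partial_{v_i}f$; subtracting $(v\cdot\nabla_x)\partial_{v_i}f$ leaves exactly $\partial_{x_i}f$. Identity (2) is the same one-line computation: expanding $\partial_{v_i}\big((-\partial_{v_j}+v_j)f\big)$ produces $-\partial_{v_i}\partial_{v_j}f+\delta_{ij}f+v_j\partial_{v_i}f$, the mixed second derivatives cancel against $-\partial_{v_j}\partial_{v_i}f$ by symmetry of partials, the $v_j\partial_{v_i}f$ terms cancel, and only $(\partial_{v_i}v_j)f=\delta_{ij}f$ survives.

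For the two magnetic identities (3) and (4) the key structural remark I would isolate first is that each coefficient $(v\wedge B_e)_k$ in $(v\wedge B_e)\cdot\nabla_v=\sum_k(v\wedge B_e)_k\partial_{v_k}$ is linear in $v$ and enters $x$ only through $B_e$. For (3) the operator $\partial_{v_i}$ therefore commutes with $B_e$ and acts only on the factor $v$, giving $\partial_{v_i}(v\wedge B_e)_k=(e_i\wedge B_e)_k$, where $e_i$ is the $i$-th standard basis vector of $\mathbb{R}^3$; hence $[\partial_{v_i},(v\wedge B_e)\cdot\nabla_v]=(e_i\wedge B_e)\cdot\nabla_v$, which one recognizes componentwise as $(B_e\wedge\nabla_v)_i$ (for instance, for $i=1$ both sides equal $B_2\partial_{v_3}-B_3\partial_{v_2}$), yielding the vector identity. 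Symmetrically, for (4) the derivative $\nabla_x$ annihilates $v$ and differentiates only $B_e$, so the cross term with $\nabla_v\partial_{x_l}f$ cancels and only $(v\wedge\nabla_x B_e)\cdot\nabla_v f$ remains.

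The computations are routine and there is no genuine obstacle; the only spot that demands care is the cross-product bookkeeping in (3) and (4), namely checking componentwise that $(e_i\wedge B_e)\cdot\nabla_v$ coincides with $(B_e\wedge\nabla_v)_i$ and confirming that $\nabla_x$ passes through the $v$-factor cleanly. I would verify one component explicitly, say $i=1$, and then invoke relabeling and symmetry to conclude the remaining components.
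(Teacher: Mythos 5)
Your proof is correct and takes essentially the same route as the paper: a direct Leibniz-rule computation on a test function in which the second-order terms cancel and only the terms where the outer derivative hits a coefficient survive, with the cross-product bookkeeping $(e_i\wedge B_e)\cdot\nabla_v=(B_e\wedge\nabla_v)_i$ verified componentwise for $i=1$ exactly as the paper does. The only differences are cosmetic: you spell out identities (1) and (2), which the paper dismisses as obvious, and you package the key step of (3) as $\partial_{v_i}(v\wedge B_e)_k=(e_i\wedge B_e)_k$ instead of expanding all three coefficients explicitly.
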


 Now, we are ready to build a new entropy that will allow us to show the exponential decay of the norm $ H^{1} (dx d  \mu) $.  
We define this modified entropy by
 $$ \mathcal{E}(u) = C \Vert u \Vert^{2} + D \Vert \nabla_{v} u \Vert^{2} + E \langle \nabla_{x}u, \nabla_{v}u \rangle + \Vert \nabla_{x}u \Vert^{2},\quad \forall u\in H^{1}(dxd\mu),$$
 where $ C> D> E> 1 $ are constants fixed below.  We first show that $ \mathcal {E} (u) $ is equivalent to the norm $ H^{1} (dxd \mu) $ of $ u $.
 \begin{lem}\label{lem 4}
  If $E^{2} < D$, then $\forall u \in H^{1}(dxd\mu)$ 
 \begin{align}\label{ ing 14}
  \frac{1}{2} \Vert u \Vert^{2}_{H^{1}(dxd\mu)} \leq \mathcal{E}(u) \leq 2C \Vert u \Vert^{2}_{H^{1}(dxd\mu)}. 
   \end{align}
  \begin{proof}
 Let $u\in H^{1}(dx d\mu)$. Using the Cauchy-Schwarz inequality, we get
 \begin{align*}
\vert E\langle \nabla_{x}u, \nabla_{v}u \rangle  \vert \leq \frac{ E^{2}}{2} \Vert \nabla_{v} u \Vert^{2} + \frac{1}{2} \Vert \nabla_{x} u \Vert^{2}, 
 \end{align*}
 which implies,
 \begin{align*}
 C \Vert u \Vert^{2} + (D - \frac{ E^{2}}{2} )  \Vert \nabla_{v} u \Vert^{2} + (1 - \frac{1}{2}) & \Vert \nabla_{x} u \Vert^{2} \leq\mathcal{E}(u) \\
 & \leq  C \Vert u \Vert^{2} + (D + \frac{ E^{2}}{2} )  \Vert \nabla_{v} u \Vert^{2} + (1 + \frac{1}{2})  \Vert \nabla_{x} u \Vert^{2}.
 \end{align*}
 This implies \eqref{ ing 14} if $E^{2} < D$.
 \end{proof}
 \end{lem}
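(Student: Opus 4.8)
The plan is to treat $\mathcal{E}(u)$ as a quadratic form in the three nonnegative quantities $\|u\|^2$, $\|\nabla_v u\|^2$, $\|\nabla_x u\|^2$, whose only indefinite contribution is the cross term $E\langle \nabla_x u, \nabla_v u\rangle$. The entire difficulty — such as it is — lies in controlling this single term; everything else is a bookkeeping of the coefficients under the hypotheses $C>D>E>1$ and $E^2<D$.

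First I would bound the cross term by Cauchy--Schwarz followed by Young's inequality, choosing the split so that the $\|\nabla_v u\|^2$ piece carries the factor $E^2/2$ and the $\|\nabla_x u\|^2$ piece carries $1/2$, namely
\begin{align*}
\vert E\langle \nabla_{x}u, \nabla_{v}u \rangle \vert \leq \frac{E^{2}}{2}\,\Vert \nabla_{v}u\Vert^{2} + \frac{1}{2}\,\Vert \nabla_{x}u\Vert^{2}.
\end{align*}
This particular balancing is natural because the coefficient of $\|\nabla_x u\|^2$ in $\mathcal{E}$ is exactly $1$, so the split leaves a clean residual $1/2$ in the $\nabla_x$ block. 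Inserting this two-sided bound into the definition of $\mathcal{E}(u)$ sandwiches it between the two diagonal forms
\begin{align*}
C\Vert u\Vert^{2} + \Big(D-\tfrac{E^{2}}{2}\Big)\Vert\nabla_{v}u\Vert^{2} + \tfrac{1}{2}\Vert\nabla_{x}u\Vert^{2}
\;\leq\; \mathcal{E}(u) \;\leq\;
C\Vert u\Vert^{2} + \Big(D+\tfrac{E^{2}}{2}\Big)\Vert\nabla_{v}u\Vert^{2} + \tfrac{3}{2}\Vert\nabla_{x}u\Vert^{2}.
\end{align*}

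For the lower bound I would check that every coefficient on the left exceeds $1/2$: trivially $C>1>\tfrac12$ and the $\nabla_x$ coefficient equals $\tfrac12$, while the hypothesis $E^2<D$ gives $D-\tfrac{E^2}{2} > D-\tfrac{D}{2} = \tfrac{D}{2} > \tfrac12$ since $D>1$. Hence the left-hand form dominates $\tfrac12\|u\|_{H^1(dxd\mu)}^2$. For the upper bound I would show each coefficient on the right is at most $2C$: from $E^2<D<C$ one gets $D+\tfrac{E^2}{2} < \tfrac{3D}{2} < \tfrac{3C}{2} < 2C$, while $C<2C$ and $\tfrac32<2<2C$ since $C>1$. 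This yields $\mathcal{E}(u)\leq 2C\|u\|_{H^1(dxd\mu)}^2$ and completes the equivalence. The only conceptual point is recognizing that $E^2<D$ is exactly the condition guaranteeing the $\nabla_v$ block stays strictly positive after absorbing the cross term, so there is no genuine obstacle beyond verifying these elementary inequalities.
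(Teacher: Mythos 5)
Your proposal is correct and follows essentially the same route as the paper: the identical Cauchy--Schwarz/Young bound $\vert E\langle \nabla_{x}u,\nabla_{v}u\rangle\vert \leq \frac{E^{2}}{2}\Vert\nabla_{v}u\Vert^{2}+\frac{1}{2}\Vert\nabla_{x}u\Vert^{2}$, the same two-sided sandwich of $\mathcal{E}(u)$ between diagonal forms, and the same use of $E^{2}<D$ together with $C>D>E>1$. The only difference is that you spell out the final coefficient checks (e.g.\ $D-\frac{E^2}{2}>\frac{D}{2}>\frac12$ and $D+\frac{E^2}{2}<\frac{3C}{2}<2C$) which the paper leaves implicit in its closing sentence.
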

Note that using the same approach as in Section $ 3 $, we can show the existence of a solution of the problem \eqref{2}, which will be denoted as $ f $, in the space $ H^{1} (dxd \mu) $ in the sense of an associated semi-group. Using the preceding results, we are able to study the decrease of the modified entropy $ \mathcal{E} (f (t)) $.
  \begin{prop}\label{prop 4}
  Suppose that $ B_{e} $ satisfies the Hypothesis \ref{hyp 1}, then there exist $ C, D, E $ and $ \kappa> 0 $, such that for all $ f_{0} \in H^{1} (dxd \mu) $ with $ \langle f_{0} \rangle = 0 $, the solution $ f $ of the system \eqref{2} satisfies 
   $$ \forall t >0, \quad \mathcal{E}(f(t)) \leq \mathcal{E}(f_{0})\, e^{-\kappa t} .$$
   \begin{proof}
  The time derivatives of the four terms defining $ \mathcal{E}(f (t)) $ will be calculated separately. For the first term we have
   \begin{align*}
 \frac{d}{dt} \Vert f \Vert^{2} & = - 2\langle \partial_{t}f, f \rangle \\
 & = -2 \underbrace{\langle v\cdot \nabla_{x}f, f \rangle}_{=0} + 2 \underbrace{\langle (v\wedge B_{e})\cdot \nabla_{v}f, f \rangle}_{=0} - 2\langle (-\nabla_{v} + v)\cdot\nabla_{v}f, f  \rangle \\
 & = -2 \Vert \nabla_{v} f \Vert^{2}.
 \end{align*}
  The second term writes
   \begin{align*}
 \frac{d}{dt} \Vert \nabla_{v}f \Vert^{2} & = 2 \langle \nabla_{v} \nabla_{t}f, \nabla_{v} f \rangle \\
 & = -2 \langle \nabla_{v}(v \cdot \nabla_{x}f), \nabla_{v}f\rangle + 2 \langle \nabla_{v}((v\wedge B_{e})\cdot \nabla_{v}f), \nabla_{v}f \rangle \\
 &-2 \langle \nabla_{v} (-\nabla_{v} + v )\cdot \nabla_{v}f, \nabla_{v}f \rangle
\\ & = -2 \underbrace{\langle v \nabla_{x} \nabla_{v} f, \nabla_{v} f \rangle}_{=0} -2 \langle  [\nabla_{v}, v\cdot \nabla_{x}]f, \nabla_{v}f  \rangle + 2 \langle [\nabla_{v}, (v\wedge B_{e})\cdot \nabla_{v}] f, \nabla_{v}f \rangle \\
& + 2 \underbrace{\langle ((v\wedge B_{e})\cdot \nabla_{v})\nabla_{v} f, \nabla_{v} f \rangle}_{=0} - 2 \Vert (-\nabla_{v} +v )\cdot \nabla_{v} f \Vert^{2}.
 \end{align*}
 We used the fact that the operators  $ v \cdot \nabla_{x} $ and $ (v \wedge B_{e}) \cdot \nabla_{v} $ are skew-adjoint in $L^{2}(dxd\mu)$ by Lemma \ref{lem 14}. According to equalities $(1)$ and $(3)$ of Lemma \ref{lem 3}, we then obtain
 \begin{align*}
\frac{d}{dt} \Vert \nabla_{v} f \Vert^{2} = -2 \langle \nabla_{x} f,\nabla_{v}f \rangle + 2 \langle (B_{e} \wedge \nabla_{v})f,\nabla_{v}f \rangle -2 \Vert (-\nabla_{v} + v)\cdot\nabla_{v} f \Vert^{2}.
\end{align*}
The time derivative of the third term can be calculated as follows:
\begin{align}\label{ineg 20}
\frac{d}{dt} \langle \partial_{v}f, \nabla_{x} f \rangle = \langle \nabla_{v}\partial_{t}f, \nabla_{x}f \rangle + \langle \nabla_{v}f,  \nabla_{x} \partial_{t}f\rangle.
\end{align}
We calculate each term of equality \eqref{ineg 20}. For the first term,  using equalities $(1), (2)$ and $(3)$ of Lemma \ref{lem 3}, we obtain
\begin{align*}
\langle \nabla_{v}\partial_{t} f, \nabla_{x}f  \rangle &= - \langle \nabla_{v}(v\cdot \nabla_{x}f - (v\wedge B_{e} )\cdot \nabla_{v} f + (-\nabla_{v} + v)\cdot\nabla_{v} f ), \nabla_{x} f \rangle \\
&= -\Vert  \nabla_{x} f \Vert^{2} -\langle v\cdot\nabla_{x}(\nabla_{v}f), \nabla_{x}f \rangle - \langle \nabla_{v} f, \nabla_{x}f \rangle - \langle \Delta_{v} f, \nabla_{v}\cdot \nabla_{x}f \rangle \\
& + \langle ( B_{e} \wedge\nabla_{v})f, \nabla_{x} f \rangle + \langle ((v\wedge B_{e}) \cdot \nabla_{v})\nabla_{v}f, \nabla_{x}f \rangle.
\end{align*} 

For the second term of equality \eqref{ineg 20}, using  equality $(4)$ of Lemma \ref{lem 3}, we have
\begin{align*}
\langle \nabla_{v}f, \nabla_{x}\partial_{t}f \rangle &= -\langle \nabla_{v}f, \nabla_{x}(v\cdot \nabla_{x}f - (v\wedge B_{e})\cdot \nabla_{v}f + (-\nabla_{v} + v)\cdot\nabla_{v} f) \rangle \\ 
&= -\langle \nabla_{v}f, v\cdot\nabla_{x} (\nabla_x f) \rangle + \langle \nabla_{v}f, (v\wedge \nabla_{x}B_{e})\cdot \nabla_{v}f \rangle \\
&+ \langle \nabla_{v}f, ((v\wedge B_{e})\cdot \nabla_{v}) \nabla_{x} f \rangle -\langle \nabla_{v} f, \nabla_{x}f \rangle - \langle \Delta_{v} f, \nabla_{v}\cdot \nabla_{x}f \rangle.
\end{align*} 
 Combining the proceding equalities of the two terms in \eqref{ineg 20}, we get
\begin{align*}
\frac{d}{dt} \langle \nabla_{v}f, \nabla_{x}f \rangle =& -\Vert \nabla_{x} f \Vert^{2} - \langle\nabla_{v}f, \nabla_{x}f \rangle + 2\langle (-\nabla_{v} + v )\cdot\nabla_{v}f, \nabla_{v}\cdot(\nabla_{x}f)  \rangle \\
& - [\langle v\cdot\nabla_{x}(\nabla_{v} f), \nabla_{x}f \rangle + \langle  \nabla_{v}f, v   \cdot \nabla_{x}(\nabla_x f) \rangle] \\
& + \langle (B_{e} \wedge\nabla_{v})f, \nabla_{x}f \rangle + \langle \nabla_{v}f, (v\wedge \nabla_{x}B_{e})\cdot \nabla_{v}f \rangle \\
& + [\langle ((v\wedge B_{e}) \cdot \nabla_{v})\nabla_{v}f, \nabla_{x}f \rangle + \langle \nabla_{v}f, ((v\wedge B_{e})\cdot \nabla_{v}) \nabla_{x} f \rangle].
\end{align*} 
According to Lemma \ref{lem 14}, the operators $ v \cdot \nabla_{x} $ and $ (v \wedge B_{e}) \cdot \nabla_ {v} $ are skew-adjoint in $ L^{2} (dxd \mu) $, we have
\begin{align}
&\langle v\cdot\nabla_{x}(\nabla_{v} f), \nabla_{x}f \rangle + \langle  \nabla_{v}f, v   \cdot \nabla_{x}(\nabla_x f) \rangle = 0 \label{eg 26}\\
&\langle ((v\wedge B_{e}) \cdot \nabla_{v})\nabla_{v}f, \nabla_{x}f \rangle + \langle \nabla_{v}f, ((v\wedge B_{e})\cdot \nabla_{v}) \nabla_{x} f \rangle =0\label{eg 27}.
\end{align}
Using equality \eqref{eg 26}-\eqref{eg 27}, we obtain
\begin{align*}
\frac{d}{dt} \langle \nabla_{v}f, \nabla_{x}f \rangle &= -\Vert \nabla_{x} f \Vert^{2} - \langle \nabla_{v}f, \nabla_{x}f \rangle + 2\langle (-\nabla_{v} + v )\nabla_{v}f, \nabla_{v}\cdot(\nabla_{x}f ) \rangle\\
& + \langle (B_{e} \wedge \nabla_{v})f, \nabla_{x}f \rangle + \langle \nabla_{v}f, (v\wedge \nabla_{x}B_{e})\cdot \nabla_{v}f \rangle .
\end{align*}
Eventually, the time derivative of the last term takes the following form
\begin{align*}
\frac{d}{dt} \Vert\nabla_{x} f \Vert^{2}&= 2\langle \nabla_{x}\partial_{t}f, \nabla_{x}f \rangle\\
&= -2 \underbrace{\langle \nabla_{x}(v\cdot \nabla_{x}f), \nabla_{x}f \rangle}_{=0} + 2 \langle \nabla_{x}((v\wedge B_{e})\cdot \nabla_{v}f), \nabla_{x}f \rangle \\&-2 \langle \nabla_{x} (-\nabla_{v} + v )\cdot \nabla_{v}f, \nabla_{x}f \rangle \\
&= -2 \Vert \nabla_{x}\nabla_{v}f \Vert^{2} + 2 \langle (v\wedge \nabla_{x}B_{e})\cdot \nabla_{v}f, \nabla_{x}f \rangle + 2\underbrace{\langle ((v\wedge B_{e})\cdot \nabla_{v})\nabla_{x}f, \nabla_{x}f \rangle }_{=0} \\
&= -2 \Vert \nabla_{x}\nabla_{v}f \Vert^{2} + 2 \langle (v\wedge \nabla_{x}B_{e})\cdot \nabla_{v}f, \nabla_{x}f \rangle,
\end{align*}
By collecting all the ties, we get
\begin{align*}
\frac{d}{dt} \mathcal{E}(f) = &-2C \Vert \nabla_{v}f \Vert^{2} -2D \Vert (-\nabla_{v} + v)\cdot\nabla_{v} f \Vert^{2} - E \Vert \nabla_{x} f \Vert^{2} - 2 \Vert \nabla_{x}\nabla_{v} f \Vert^{2} \\
& -(2D + E) \langle \nabla_{x}f, \nabla_{v}f \rangle + 2E \langle (-\nabla_{v} +v)\cdot\nabla_{v} f, \nabla_{v}\cdot(\nabla_{x} f )\rangle \\
& + 2D\langle (B_{e} \wedge \nabla_{v})f, \nabla_{v}f \rangle + E \langle (B_{e} \wedge \nabla_{v})f, \nabla_{x}f \rangle \\
&+ E \langle (v\wedge \nabla_{x}B_{e})\cdot\nabla_{v} f, \nabla_{v}f \rangle + 2 \langle (v\wedge \nabla_{x}B_{e})\cdot\nabla_{v}f, \nabla_{x}f \rangle.
\end{align*}
Now, we need the following technical lemma.
\begin{lem}\label{lem 5}
We have the following equalities in $L^2 (dxd\mu)$:
\begin{enumerate}
\item[i.] $\langle (v\wedge \nabla_{x}B_{e})\cdot\nabla_{v} f, \nabla_{v}f \rangle = -\langle \nabla_{v} \wedge (\nabla_{x}B_{e}\cdot \nabla_{v}f), \nabla_{v}f \rangle. $
\item[ii.] $\langle (v\wedge \nabla_{x}B_{e})\cdot\nabla_{v}f, \nabla_{x}f \rangle =-\langle \nabla_{v} \wedge (\nabla_{x}B_{e}\cdot \nabla_{x}f), \nabla_{v}f \rangle. $
\end{enumerate}
where $ \nabla_{x} B_{e} $ is the Jacobian matrix of the function $$ x \to B_{e}(x)=(B_{1}(x),B_{2}(x),B_{3}(x)),$$
and 
\begin{align*}
\langle (v\wedge \nabla_{x}B_{e})\cdot \nabla_{v}f, \nabla_{v} f \rangle  &=   \sum_{i=1}^{3} \left[ \int \partial_{v_{1}}\mu (v) \left( (\nabla_{x}B_{e})_{i2} \, \partial_{v_{3}}f - (\nabla_{x}B_{e})_{i3}\,\partial_{v_{2}}f \right) \partial_{v_{i}}f \, dx dv\right] \\ 
& +  \sum_{i=1}^{3} \left[ \int \partial_{v_{2}}\mu (v) \left( (\nabla_{x}B_{e})_{i3} \, \partial_{v_{1}}f - (\nabla_{x}B_{e})_{i1}\,\partial_{v_{3}}f \right) \partial_{v_{i}}f \, dx dv \right] \\
&+ \sum_{i=1}^{3} \left[ \int \partial_{v_{3}}\mu (v) \left( (\nabla_{x}B_{e})_{i1} \, \partial_{v_{2}}f - (\nabla_{x}B_{e})_{i2}\,\partial_{v_{1}} f \right) \partial_{v_{i}}f \, dx dv \right] .
\end{align*}
\begin{proof}
Using the fact that $v \,\mu(v) = \nabla_{v}(\mu(v))$ and  integrations by part, we obtain the result by simple computations.
\end{proof}
\end{lem}
Let's go back to the proof of Proposition \ref{prop 4}. Using  Lemma \ref{lem 5}, the time derivative of $ \mathcal{E} (f (t)) $ takes the following form:
\begin{align}
\frac{d}{dt} \mathcal{E}(f(t)) = &-2C \Vert \nabla_{v}f \Vert^{2} -2D \Vert (-\nabla_{v} + v)\cdot\nabla_{v} f \Vert^{2} - E \Vert \nabla_{x} f \Vert^{2} - 2 \Vert \nabla_{x}\nabla_{v} f \Vert^{2} \notag \\
& -(2D + E) \langle \nabla_{x}f, \nabla_{v}f \rangle + 2E \langle (-\nabla_{v} +v)\cdot\nabla_{v} f, \nabla_{v}\cdot(\nabla_{x} f) \rangle \notag \\
& + 2D\langle (B_{e} \wedge \nabla_{v})f, \nabla_{v}f \rangle + E \langle (B_{e} \wedge \nabla_{v})f,\nabla_{x}f \rangle \notag \\
&-E\langle \nabla_{v} \wedge (\nabla_{x}B_{e}\cdot \nabla_{v}f), \nabla_{v}f \rangle - 2 \langle \nabla_{v} \wedge (\nabla_{x}B_{e}\cdot \nabla_{x}f), \nabla_{v}f \rangle . \notag
\end{align}
Now we estimate the scalar products in the previous equality in $ L^{2} (dxd \mu) $. For all $  \eta, \eta^{'} $ and $ \eta^{''}> 0 $, we have
\begin{align*}
&\vert (2D+E) \langle \nabla_{x}f, \nabla_{v}f \rangle \vert \leq \frac{1}{2} \Vert \nabla_{x}f  \Vert^{2} + \frac{1}{2} (2D+E)^{2} \Vert \nabla_{v}f \Vert^{2},
\\ &\vert 2E \langle (-\nabla_{v} + v)\cdot\nabla_{v} f, \nabla_{v}\cdot(\nabla_{x}f )\rangle \vert  \leq \Vert \nabla_{v}\cdot( \nabla_{x} f) \Vert^{2} + E^{2} \Vert (-\nabla_{v} + v )\cdot\nabla_{v} f \Vert^{2}, \\
& \vert 2D\langle (B_{e} \wedge \nabla_{v})f, \nabla_{v}f  \rangle \vert \leq 2D \Vert B_{e} \Vert_{\infty}\, \Vert \nabla_{v} f \Vert^{2} ,\\
& \vert E\langle (B_{e} \wedge \nabla_{v})f, \nabla_{x}f \rangle \vert \leq E \frac{\eta}{2} \Vert \nabla_{x}f \Vert^{2} + E\frac{1}{2\eta} \Vert B_{e} \Vert_{\infty}^{2} \Vert \nabla_{v}f \Vert^{2}, 
\end{align*}
and using than $\Vert \nabla^{2}_{v}f \Vert^{2} \leq \Vert (-\nabla_{v} + v)\cdot\nabla_{v} f \Vert^{2} + \Vert \nabla_{v} f \Vert^{2}$, we obtain
\begin{align*}
\vert E\langle (\nabla_{v} \wedge (\nabla_{x}B_{e} \cdot \nabla_{v}))f, \nabla_{v}f  \rangle \vert &\leq \frac{\eta^{'}}{2} \Vert \nabla_{v}^{2} \, f\Vert^{2} + \frac{E^{2}}{2 \eta^{'}} \Vert \nabla_{x} B_{e} \Vert_{\infty}^{2} \,\Vert \nabla_{v}f \Vert^{2} \\
&\leq \frac{\eta^{'}}{2} \Vert (-\nabla_{v} + v )\cdot\nabla_{v} f\Vert^{2} + (\frac{E^{2}}{2 \eta^{'}} \Vert \nabla_{x} B_{e} \Vert_{\infty}^{2}  \\& + \frac{\eta^{'}}{2} ) \Vert \nabla_{v}f \Vert^{2}, 
\end{align*}
The last scalar product is bounded by

\begin{align*}
\vert 2\langle (\nabla_{v} \wedge (\nabla_{x}B_{e} \cdot \nabla_{x}))f, \nabla_{v}f \rangle \Vert \leq \eta^{''} \Vert \nabla_{x} \nabla_{v} f \Vert^{2}  + \frac{1}{\eta^{''}} \Vert \nabla_{x} B_{e}\Vert_{\infty}^{2} \, \Vert \nabla_{v} f \Vert^{2}.
\end{align*}

Combining all the previous estimates, we have
\begin{align*}
\frac{d}{dt} \mathcal{E}(f) &\leq ( -2C + \frac{1}{2} (2D+E)^{2} + 2D \Vert B_{e} \Vert_{\infty} + \frac{E}{2\eta} \Vert B_{e} \Vert^{2}_{\infty} \\
& +\frac{E^{2}}{2 \eta^{'}} \Vert \nabla_{x} B_{e} \Vert_{\infty}^{2}   + \frac{\eta^{'}}{2} + \frac{1}{\eta^{''}}\Vert \nabla_{x} B_{e}\Vert_{\infty}^{2} ) \, \Vert \nabla_{v} f
 \Vert^{2}\\
 &+ (-2D + E^{2}+ \frac{\eta^{'}}{2})\, \Vert (-\nabla_{v} + v)\cdot\nabla_{v} f \Vert^{2} \\
 &+ (-E + \frac{1}{2} + \frac{\eta}{2}E) \, \Vert\nabla_{x} f \Vert^{2} \\
 &+ (-2 + 1 + \eta^{''}) \, \Vert \nabla_{x}\nabla_{v} f \Vert^{2}.
\end{align*}
We notice that
\begin{align*}
A &= \frac{1}{2} (2D+E)^{2} + 2D \Vert B_{e} \Vert_{\infty} + \frac{E}{2\eta} \Vert B_{e} \Vert^{2}_{\infty} \\
& +\frac{E^{2}}{2 \eta^{'}} \Vert \nabla_{x} B_{e} \Vert_{\infty}^{2}   + \frac{\eta^{'}}{2} + \frac{1}{\eta^{''}}\Vert \nabla_{x} B_{e}\Vert_{\infty}^{2} .
\end{align*}
We choose $ \eta $, $ \eta^{''} $, $ E $, $ D $ and $ C $ such that
\begin{enumerate}
\item $ \eta \leq 1 $ and $\eta^{''} \leq 1$ .
\item $E \geq 2  $.
\item $D \geq  \frac{1}{2} (E^{2} + \frac{\eta^{'}}{2}).$
\item $C \geq A $.
\end{enumerate}
Under the previous conditions, we get
\begin{align*}
\frac{d}{dt} \mathcal{E}(f) & \leq - C \Vert \nabla_{v}f \Vert^{2} - \frac{E}{4} \Vert \nabla_{x} f \Vert^{2} 
 \leq  - \frac{E}{4} (\Vert \nabla_{v} f \Vert^{2} + \Vert \nabla_{x} f \Vert^{2}).
\end{align*}
Using the Poincar\'e inequality in space and velocity variables, we then obtain
\begin{align*}
\frac{d}{dt} \mathcal{E}(f) \leq - \frac{E}{8} (\Vert \nabla_{v} f \Vert^{2} + \Vert \nabla_{x} f \Vert^{2}) -\frac{E}{8}c_{p} \Vert f \Vert^{2} \leq  -\frac{E}{8}  \frac{c_{p}}{2C} \mathcal{E}(f).
\end{align*}
Which completes Proposition \ref{prop 3} with $\displaystyle\kappa = \frac{E}{8}  \frac{c_{p}}{2C}>0$.
   \end{proof}
  \end{prop}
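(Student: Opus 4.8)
The plan is to differentiate the modified entropy $\mathcal{E}(f(t))$ along the flow of \eqref{2} and to establish a differential inequality of the form $\frac{d}{dt}\mathcal{E}(f)\le -\kappa\,\mathcal{E}(f)$, from which the claim follows by integrating and invoking the norm equivalence of Lemma \ref{lem 4}. Since $\mathcal{E}$ is built from the four quantities $\Vert f\Vert^2$, $\Vert\nabla_v f\Vert^2$, $\langle\nabla_x f,\nabla_v f\rangle$ and $\Vert\nabla_x f\Vert^2$, I would compute the time derivative of each piece separately.

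First I would treat the $L^2$ piece: using that $v\cdot\nabla_x$ and $(v\wedge B_e)\cdot\nabla_v$ are skew-adjoint in $L^2(dxd\mu)$ (Lemma \ref{lem 14}), only the collision operator survives and yields $\frac{d}{dt}\Vert f\Vert^2=-2\Vert\nabla_v f\Vert^2$. For $\frac{d}{dt}\Vert\nabla_v f\Vert^2$ and $\frac{d}{dt}\Vert\nabla_x f\Vert^2$ I would commute $\nabla_v$, respectively $\nabla_x$, through the transport, magnetic and collision terms using the commutator identities of Lemma \ref{lem 3}: the identity $[\partial_{v_i},v\cdot\nabla_x]=\partial_{x_i}$ produces the essential coupling $-2\langle\nabla_x f,\nabla_v f\rangle$, while $[\nabla_v,(v\wedge B_e)\cdot\nabla_v]=B_e\wedge\nabla_v$ and $[\nabla_x,(v\wedge B_e)\cdot\nabla_v]=(v\wedge\nabla_x B_e)\cdot\nabla_v$ generate the magnetic contributions. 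The most delicate computation is that of the cross term $\frac{d}{dt}\langle\nabla_v f,\nabla_x f\rangle$, which is exactly what makes the scheme hypocoercive: here all four commutators enter, and after using skew-adjointness to cancel the two transport pairs and the two magnetic pairs, namely \eqref{eg 26}-\eqref{eg 27}, one extracts the decisive coercive term $-\Vert\nabla_x f\Vert^2$ at the price of several error terms carrying $B_e$ and $\nabla_x B_e$.

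Collecting the four derivatives, $\frac{d}{dt}\mathcal{E}(f)$ becomes the sum of the good dissipative quantities $-2C\Vert\nabla_v f\Vert^2$, $-2D\Vert(-\nabla_v+v)\cdot\nabla_v f\Vert^2$, $-E\Vert\nabla_x f\Vert^2$ and $-2\Vert\nabla_x\nabla_v f\Vert^2$, together with cross and magnetic error terms. I would first rewrite the two contributions containing $v\wedge\nabla_x B_e$ via Lemma \ref{lem 5}, and then bound every error term by Young's inequality with free parameters $\eta,\eta',\eta''>0$, using the $L^\infty$ bounds on $B_e$ and $\nabla_x B_e$ furnished by the smoothness Hypothesis \ref{hyp 1} on the compact torus. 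The purpose of these estimates is to redistribute each error into the four dissipative reservoirs above.

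The crux is then the choice of constants. I would fix $\eta,\eta''\le 1$, take $E\ge 2$ so that $-E\Vert\nabla_x f\Vert^2$ absorbs the $x$-errors and $-2\Vert\nabla_x\nabla_v f\Vert^2$ absorbs the $\eta''$ term, then pick $D$ large, subject to $E^2<D$ for the equivalence of Lemma \ref{lem 4} and to $D\ge\frac{1}{2}(E^2+\frac{\eta'}{2})$ so that $-2D\Vert(-\nabla_v+v)\cdot\nabla_v f\Vert^2$ dominates its errors, and finally $C$ larger than the accumulated coefficient $A$ of $\Vert\nabla_v f\Vert^2$. This gives $\frac{d}{dt}\mathcal{E}(f)\le -C\Vert\nabla_v f\Vert^2-\frac{E}{4}\Vert\nabla_x f\Vert^2$. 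A last use of the Poincar\'e inequality in both the $x$ and $v$ variables converts part of the gradient dissipation into $-\frac{E}{8}c_p\Vert f\Vert^2$, and together with $\mathcal{E}(f)\le 2C\Vert f\Vert_{H^1(dxd\mu)}^2$ one obtains $\frac{d}{dt}\mathcal{E}(f)\le-\kappa\,\mathcal{E}(f)$ with $\kappa=\frac{E}{8}\frac{c_p}{2C}$, whence the exponential decay by Gr\"onwall's lemma. I expect the main obstacle to be this bookkeeping of the magnetic error terms: unlike the field-free Fokker-Planck case, the commutators with $(v\wedge B_e)\cdot\nabla_v$ create genuinely new contributions (those in $B_e\wedge\nabla_v$ and $v\wedge\nabla_x B_e$), and one must check that the hierarchy $C>D>E>1$ can be arranged to absorb all of them at once without violating the norm equivalence.
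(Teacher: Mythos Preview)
Your proposal is correct and follows essentially the same route as the paper: you differentiate the four pieces of $\mathcal{E}$ using the commutator identities of Lemma~\ref{lem 3} and the skew-adjointness of Lemma~\ref{lem 14}, rewrite the $v\wedge\nabla_x B_e$ terms via Lemma~\ref{lem 5}, absorb all errors by Young with parameters $\eta,\eta',\eta''$, and close with the same hierarchy of constants and the same $\kappa=\frac{E}{8}\frac{c_p}{2C}$. Your explicit mention of the constraint $E^2<D$ needed for Lemma~\ref{lem 4} is a point the paper leaves implicit in the proof itself.
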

   \begin{proof}[Proof of Theorem \ref{thm 4}.]
   Using Lemma \ref{lem 4} and Proposition \ref{prop 4}, we get $ \kappa> 0 $ and $ 1 <E <D <C $ such that 
   \begin{align*}
 \Vert f \Vert_{H^{1}(dxd\mu)}^{2} \leq 2\, \mathcal{E}(f) &\leq 2C e^{-\kappa t} \mathcal{E}(f_{0}) \\
 &\leq 4C e^{-\kappa t}\Vert f_{0} \Vert_{H^{1}(dxd\mu)}^{2}.
 \end{align*}
 This completes the proof of Theorem \ref{thm 4}.
 \end{proof}

 \section{Enlargement of the functional space}\label{section 4}
 \subsection{Intermediate results}\label{subs 4.1}
 In this section, we  extend the results of exponential time decay of the semi-group to  enlarged spaces (which we will define later), following the recent work of Gualdani, Mischler, Mouhot in \cite{gualdani2010factorization}.\\\\
 \textbf{Notation:} Let $ E $ be a Banach space. 
\begin{enumerate}
\item[-] We denote by $ \mathcal{C} (E) $  the space of unbounded, closed operators with dense domains in $ E $.
\item[-] We denote by $B(E)$ the space of  bounded operators in $E$.
\item[-] Let $ a \in \mathbb{R} $. We define the complex half-plane 
$$ \Delta_{a} = \{z \in \mathbb{C}, \mathrm{Re}\, z> a \}. $$
\item[-] Let $ L \in \mathcal{C} (E) $. $ \Sigma (L) $ denote the spectrum of the operator $ L $ and $\sigma_{d}(L)$ its discrete spectrum.
\item[-] Let $ \xi \in \Sigma_{d}(L) $, for $r$ sufficiently small we define the spectral projection associated with $ \xi $ by
$$ \Pi_{L, \xi}: = \frac{1}{2i \pi} \int_{\vert z- \xi \vert = r}\,(L-z)^{-1} dz. $$
  \item[-] Let $ a \in \mathbb{R} $ be such that $ \Delta_{a} \cap \Sigma (L) = \{\xi_{1}, \xi_{2}, ..., \xi_{k} \} \subset \Sigma_{d} (L) $. We define $ \Pi_{L, a} $ as the operator
$$ \Pi_{L, a} = \sum\limits_{j = 1}^{k} \, \Pi_{L, \xi_{j}}. $$
\end{enumerate}
We need the following definition on the convolution of semigroup (corresponding to composition at the level of the resolvent operators).
\begin{defi}[Convolution of time dependent operators]
Let $X_1 ,
X_2$ and $X_3$ be Banach spaces. For two given functions
$$ \mathcal{S}_1 \in L^1 (\mathbb{R}^{+} ; B(X_1 , X_2 )) \text{ and } \mathcal{S}_2 \in L^1 (\mathbb{R}^{+} ; B(X_2 , X_3 )),$$
we define the convolution $\mathcal{S}_2 * \mathcal{S}_1 \in L^1 (\mathbb{R}^{+} ; B(X_1 , X_3 ))$ by
$$ \forall t \geq 0, (\mathcal{S}_2 * \mathcal{S}_1 )(t) :=
\int_{0}^{t}\, \mathcal{S}_2 (s) \mathcal{S}_1 (t -s) \,ds.$$
\end{defi}
When $\mathcal{S} = \mathcal{S}_1 = \mathcal{S}_2$ and $X_1 = X_2 = X_3$ , we define inductively $\mathcal{S}^{(*1)} = \mathcal{S} \text{ and }
\mathcal{S}^{(*\ell)} = \mathcal{S} * \mathcal{S}^{(*(\ell -1))} \text{ for any } \ell\geq  2.$

 We say that $ L\in \mathcal{C}(E)$ is hypodissipative if it is dissipative for some norm equivalent to the canonical norm of $ E$ and we say that $L$ is dissipative for the norm $\Vert \cdot\Vert_{E} $ on $E$ if
 $$\forall f \in D(L ), \forall f^{*} \in  E^{*} \text{ such that } \langle f, f^{*} \rangle  = \Vert f\Vert_{E} = \Vert f^{*} \Vert_{E^{*}} ,
\mathrm{Re}\, \langle Lf, f^{*} \rangle \leq 0.$$
 We refer to the paper \cite[Section 2.3]{gualdani2010factorization} for an introduction to this subject. Now, we recall  the crucial Theorem of enlargement of the functional space.
 \begin{thm}[Theorem 2.13 in \cite{gualdani2010factorization}]\label{thm 9}
Let  $ E $ and $ \mathcal{E} $ be two Banach spaces such that $ E \subset \mathcal {E} $,
 $ L \in \mathcal{C} (E) $ and $ \mathcal{L} \in \mathcal{C} (\mathcal{E}) $ such that $ \mathcal{L}_{|_{E} } = L $. We suppose that there exist $\mathcal{A}$ and $\mathcal{B} \in \mathcal{C} (\mathcal{E})$ such that  
 $ \mathcal{L} = \mathcal{A} + \mathcal{B} $ (with corresponding
restrictions $A, B$  on $E$). Suppose there exists $ a \in \mathbb{R} $ and  $ n \in \mathbb{N} $ such that
 \begin{itemize}
 \item[$(H_{1})$] \textbf{Locating the spectrum of $ L $:} \\
 $$ \Sigma (L) \cap \Delta_{a} = \{0 \} \subset \Sigma_{d} (L), \quad $$
 and $ L-a $ is dissipative on $ \mathrm{Im} (Id_{E} - \Pi_{L,0}) $
 \item[$(H_{2})$] \textbf{Dissipativity of $ \mathcal{B} $ and bounded character of $ \mathcal{A} $:}
 $ (\mathcal{B}-a) $ is hypodissipative on $ \mathcal{E} $ and $ \mathcal{A} \in B (\mathcal{E}) $ and $ A \in B (E) $.
 \item[$(H_{3})$] \textbf{Regularization properties of $ T_{n} (t) = \left(\mathcal{A} S_{\mathcal{B}} (t) \right)^ {( * n)} :$}
 $$ \Vert T_{n} (t) \Vert_{B (\mathcal{E}, E)} \leq C_{a, n} \, e^{at}. $$
\end{itemize}

Then for  all $ a'> a $, we have the following estimate:
$$ \forall t\geq 0, \quad \Vert S_{\mathcal{L}} (t) -S_{\mathcal{L}} (t) \Pi_{\mathcal{L}, 0} \Vert_{B(\mathcal{E})} \leq C_{a'}\,e^{a' t} . $$
\end{thm}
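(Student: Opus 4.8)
The plan is to follow the factorization (iterated Duhamel) strategy underlying the enlargement method of \cite{gualdani2010factorization}, splitting $S_{\mathcal{L}}$ into a finite sum of terms that remain in the $\mathcal{B}$-dynamics and a single remainder that is forced into the small space $E$ by $(H_3)$. First I would record the two elementary consequences of the hypotheses that will be used repeatedly. By $(H_2)$ the operator $\mathcal{A}\in B(\mathcal{E})$ is a bounded perturbation of the generator $\mathcal{B}$, so $\mathcal{L}=\mathcal{A}+\mathcal{B}$ generates a strongly continuous semigroup $S_{\mathcal{L}}$ on $\mathcal{E}$ whose restriction to $E$ is $S_L$, and the hypodissipativity of $\mathcal{B}-a$ yields
$$\|S_{\mathcal{B}}(t)\|_{B(\mathcal{E})}\le C_a\, e^{at},\qquad t\ge 0.$$
By $(H_1)$, since $\Sigma(L)\cap\Delta_a=\{0\}$ and $L-a$ is dissipative on $\mathrm{Im}(Id_E-\Pi_{L,0})$, the dynamics on $E$ away from the kernel decays:
$$\|S_L(t)(Id_E-\Pi_{L,0})\|_{B(E)}\le C_a\, e^{at},\qquad t\ge 0.$$

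Next I would write the iterated Duhamel formula. Starting from $S_{\mathcal{L}}=S_{\mathcal{B}}+S_{\mathcal{L}}*(\mathcal{A}S_{\mathcal{B}})$ (the Laplace/time-domain counterpart of the resolvent factorization, checked by differentiating in $t$) and substituting this identity into itself $n$ times gives
$$S_{\mathcal{L}}=\sum_{\ell=0}^{n-1}S_{\mathcal{B}}*(\mathcal{A}S_{\mathcal{B}})^{(*\ell)}+S_{\mathcal{L}}*T_n,\qquad T_n=(\mathcal{A}S_{\mathcal{B}})^{(*n)},$$
with the convention $(\mathcal{A}S_{\mathcal{B}})^{(*0)}=Id$. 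For the finite sum I would simply combine the decay of $S_{\mathcal{B}}$ with the boundedness of $\mathcal{A}$: a convolution of $\ell+1$ factors each bounded by $C e^{a\cdot}$ is bounded by $C^{\ell+1}\|\mathcal{A}\|^{\ell}\frac{t^{\ell}}{\ell!}e^{at}$, and the polynomial prefactor is absorbed, so that for every $a'>a$,
$$\Big\|\sum_{\ell=0}^{n-1}\big(S_{\mathcal{B}}*(\mathcal{A}S_{\mathcal{B}})^{(*\ell)}\big)(t)\Big\|_{B(\mathcal{E})}\le C_{a',n}\,e^{a't}.$$

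For the remainder I would exploit $(H_3)$: since $T_n(s)\in B(\mathcal{E},E)$ with $\|T_n(s)\|_{B(\mathcal{E},E)}\le C_{a,n}e^{as}$, and $S_{\mathcal{L}}(t-s)$ acts on the range of $T_n(s)\subset E$ exactly as $S_L(t-s)$, one has
$$(S_{\mathcal{L}}*T_n)(t)=\int_0^t S_L(t-s)\,T_n(s)\,ds.$$
Splitting $S_L=S_L\Pi_{L,0}+S_L(Id_E-\Pi_{L,0})$, the second piece is a convolution of two $e^{a\cdot}$-bounded operator families, hence bounded by $C t\,e^{at}\le C_{a'}e^{a't}$ in $B(\mathcal{E},E)\subset B(\mathcal{E})$. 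Collecting this with the finite sum yields a decomposition
$$S_{\mathcal{L}}(t)=\int_0^t S_L(t-s)\,\Pi_{L,0}\,T_n(s)\,ds+\mathcal{R}(t),\qquad \|\mathcal{R}(t)\|_{B(\mathcal{E})}\le C_{a'}\,e^{a't},$$
so it remains only to identify the non-decaying integral with $S_{\mathcal{L}}(t)\Pi_{\mathcal{L},0}$.

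The main obstacle is exactly this last identification, i.e. transferring the spectral picture from $E$ to $\mathcal{E}$. I would carry it out at the level of resolvents. The Laplace transform of the Duhamel identity is the factorization
$$(\mathcal{L}-z)^{-1}=(\mathcal{B}-z)^{-1}-(\mathcal{L}-z)^{-1}\,\mathcal{A}\,(\mathcal{B}-z)^{-1},\qquad z\in\Delta_a,$$
which, iterated $n$ times, writes $(\mathcal{L}-z)^{-1}$ as a finite sum of operators holomorphic on $\Delta_a$ plus a remainder whose factor $(\mathcal{A}(\mathcal{B}-z)^{-1})^{n}$ is, by the Laplace transform of $(H_3)$, bounded from $\mathcal{E}$ into $E$. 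This shows $\Sigma(\mathcal{L})\cap\Delta_a=\{0\}$, that the only singularity at $z=0$ is inherited from $(\mathcal{L}-z)^{-1}$ acting on $E$, and that the Dunford projection $\Pi_{\mathcal{L},0}$ is finite-rank with range contained in $E$, whence $\Pi_{\mathcal{L},0}=\Pi_{L,0}$. With this identity the integral $\int_0^t S_L(t-s)\Pi_{L,0}T_n(s)\,ds$ is recognized as $S_{\mathcal{L}}(t)\Pi_{\mathcal{L},0}$ up to an $e^{a't}$ error, and gathering all the estimates gives
$$\|S_{\mathcal{L}}(t)-S_{\mathcal{L}}(t)\Pi_{\mathcal{L},0}\|_{B(\mathcal{E})}\le C_{a'}\,e^{a't}.$$
The genuinely delicate points are the control of the resolvent and of the Dunford/contour integral near $z=0$ and the proof that the finite-rank eigenprojection lands in $E$; everything else is bookkeeping of convolutions of exponentially decaying operator families.
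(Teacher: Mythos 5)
The paper itself gives no proof of this statement: it is imported verbatim as Theorem 2.13 of \cite{gualdani2010factorization}, so the only available comparison is with the original Gualdani--Mischler--Mouhot argument. Your proposal correctly reconstructs that argument --- the iterated Duhamel factorization $S_{\mathcal{L}}=\sum_{\ell=0}^{n-1}S_{\mathcal{B}}*(\mathcal{A}S_{\mathcal{B}})^{(*\ell)}+S_{\mathcal{L}}*T_{n}$, with the finite sum controlled by $(H_2)$, the remainder pushed into $E$ by $(H_3)$ and split along $\Pi_{L,0}$, and the resolvent-level factorization identity used to locate $\Sigma(\mathcal{L})\cap\Delta_{a}$ and show that $\Pi_{\mathcal{L},0}$ is finite rank with range in $E$, restricting to $\Pi_{L,0}$ --- so it takes essentially the same approach as the cited source.
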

To finish this subsection, we give a lemma  providing a practical criterion to prove hypothesis $ (H_3) $ in the previous theorem.
\begin{lem}[Lemma $2.4$ in \cite{mischler2016exponential}]\label{lem 8}
Let $E$ and $\mathcal{E}$ be two Banach spaces with $E \subset \mathcal{E}$  dense with continuous embedding, and consider $L  \in \mathcal{C}(E)$ and $\mathcal{L} \in \mathcal{C}(\mathcal{E})$ with $\mathcal{L}_{\mid E}=L$ and $a \in \mathbb{R}$. Let us assume that:
\begin{enumerate}
\item[a)] $\mathcal{B}-a$ is hypodissipative on $\mathcal{E}$ and $B-a$ on $E$.
\item[b)] $\mathcal{A} \in B(\mathcal{E})$ and $ A \in B(E)$.
\item[c)] There are constants $b\in \mathbb{R}$ and $\Theta \geq 0$ such that  
$$\Vert S_{\mathcal{B}}(t)\mathcal{A}\Vert_{B(\mathcal{E},E)} \leq Ce^{bt}\,t^{-\Theta} \text{ et } \Vert \mathcal{A} S_{\mathcal{B}}(t)\Vert_{B(\mathcal{E},E)} \leq Ce^{bt}\,t^{-\Theta}. $$
\end{enumerate}
Then for all $a'>a$, there exist some explicit constants  $n \in \mathbb{N}$ and $C_{a'}\geq 1$, such that
$$\forall t\geq 0, \quad \Vert T_{n}\Vert_{B(\mathcal{E},E)}\leq C_{a'}\, e^{a't}.$$
\end{lem}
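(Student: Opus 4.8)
The plan is to reduce the whole statement to two elementary estimates on the single building block $U(t):=\mathcal{A}\,S_{\mathcal{B}}(t)$ and then to control its $n$-fold convolution $T_n=U^{(*n)}$ by a simplex integral, choosing $n$ large enough to absorb the time-singularity coming from hypothesis c).

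First I would convert the hypodissipativity in a) into quantitative semigroup bounds. By definition there is a norm equivalent to $\Vert\cdot\Vert_{\mathcal{E}}$ (resp. $\Vert\cdot\Vert_{E}$) for which $\mathcal{B}-a$ (resp. $B-a$) is dissipative, whence $\Vert S_{\mathcal{B}}(t)\Vert_{B(\mathcal{E})}\le C e^{at}$ and $\Vert S_{B}(t)\Vert_{B(E)}\le C e^{at}$ for all $t\ge 0$. Combining this with the boundedness b) of $\mathcal{A}$ on $\mathcal{E}$ and of $A$ on $E$, and using that $U_{\mid E}=A\,S_{B}$, yields the two non-upgrading bounds $\Vert U(t)\Vert_{B(\mathcal{E})}\le C e^{at}$ and $\Vert U(t)\Vert_{B(E)}\le C e^{at}$. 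Hypothesis c) provides the only bound that maps $\mathcal{E}$ into $E$, namely $\Vert U(t)\Vert_{B(\mathcal{E},E)}\le C e^{bt}t^{-\Theta}$, at the price of a singularity at $t=0$. Before iterating I would also improve the exponential rate of this last bound for large times: writing $U(t)=\bigl(\mathcal{A}\,S_{\mathcal{B}}(1)\bigr)S_{\mathcal{B}}(t-1)$ for $t\ge 1$, and using $\mathcal{A}\,S_{\mathcal{B}}(1)\in B(\mathcal{E},E)$ together with the rate-$a$ bound on $S_{\mathcal{B}}$, one gets $\Vert U(t)\Vert_{B(\mathcal{E},E)}\le C e^{at}$ for $t\ge 1$; thus $b$ can effectively be replaced by $a$ away from the origin.

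I would then expand the convolution as
$$T_n(t)=\int_{\substack{\tau_1,\dots,\tau_n\ge 0\\ \tau_1+\cdots+\tau_n=t}}U(\tau_1)\cdots U(\tau_n)\,d\tau,$$
and split the simplex according to which time $\tau_j$ is largest; on that piece $\tau_j\ge t/n$. On each piece I let the factor $U(\tau_j)$ carry the $\mathcal{E}\to E$ upgrade via c), estimating the factors to its right in $B(\mathcal{E})$ (they act first, keeping the vector in $\mathcal{E}$) and those to its left in $B(E)$ (they act after the upgrade, staying in $E$), all by the rate-$a$ bounds above. Since $\tau_j\ge t/n$ the singular weight satisfies $\tau_j^{-\Theta}\le (n/t)^{\Theta}$, the product of exponential factors is $\le C^n e^{at}$, and bounding the remaining integral by the simplex volume $t^{n-1}/(n-1)!$ produces an estimate of the form $\Vert T_n(t)\Vert_{B(\mathcal{E},E)}\le C\,e^{at}\,t^{\,n-1-\Theta}$.

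Finally I would choose $n$ with $n-1-\Theta\ge 0$, i.e. $n\ge \Theta+1$, so that $t^{\,n-1-\Theta}$ is bounded near $t=0$ and grows only polynomially as $t\to+\infty$; the polynomial factor is then absorbed into the exponential, giving for every $a'>a$ a constant $C_{a'}\ge 1$ with $\Vert T_n(t)\Vert_{B(\mathcal{E},E)}\le C_{a'}e^{a't}$, as claimed. The main obstacle is exactly the possibly non-integrable singularity $t^{-\Theta}$ when $\Theta\ge 1$: a single use of c) does not yield a locally integrable kernel, and the gain must come from convolving enough copies of $U$, the simplex volume $t^{n-1}$ supplying precisely the positive power of $t$ needed to beat $t^{-\Theta}$. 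A secondary technical point is to secure the rate $a'$ close to $a$ rather than the possibly larger $b$, which is what the large-time improvement of the $B(\mathcal{E},E)$ bound achieves.
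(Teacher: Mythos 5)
The paper offers no internal proof to compare against: Lemma \ref{lem 8} is imported verbatim from Mischler--Mouhot (Lemma 2.4 of \cite{mischler2016exponential}) and used as a black box. Your reconstruction is correct and follows essentially the standard argument behind that cited result: writing $T_n$ as an integral over the simplex $\{\tau_i\ge 0,\ \sum_i\tau_i=t\}$, using the $B(\mathcal{E},E)$ bound from c) on exactly \emph{one} factor --- the one with $\tau_j\ge t/n$ --- while estimating the others in $B(\mathcal{E})$ (before the upgrade) and $B(E)$ (after it) via a) and b); this single-upgrade structure is essential, since convolving two kernels with singularity $t^{-\Theta}$, $\Theta\ge 1$, diverges, and your choice $n\ge \Theta+1$ together with the large-time replacement of $b$ by $a$ (via $U(t)=\mathcal{A}S_{\mathcal{B}}(1)S_{\mathcal{B}}(t-1)$) correctly delivers the rate $a'>a$. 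The only implicit points, both routine in this framework, are the consistency $S_{\mathcal{B}}(t)|_{E}=S_{B}(t)$ needed to read $U(t)|_E=A S_B(t)$, and the harmless imprecision that the simplex estimate really gives $Ce^{at}\bigl(t^{n-1}+n^{\Theta}t^{n-1-\Theta}\bigr)$, both summands being polynomially bounded and hence absorbed into $e^{a't}$.
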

\subsection{ Study of the magnetic-Fokker-Planck operator on the spaces $ L ^ {p} (m) $ and $\tilde{W}^{1,p}(m)$:}\label{subs 4.2}
This part consists in building the general framework of the problem.
 ~\par Recall first the equation of Fokker-Planck \eqref{1}  written in original variable:
 \begin{align}\label{eq 36}
\partial_{t} F &= -P_{0} \, F, \quad   F(0,x,v)=  F_{0}(x,v),\\
\text{where}\quad & -P_{0}\, F=\nabla_{v}\cdot (\nabla_{v} F + K \, F) - v\cdot \nabla_{x}F,\notag
\end{align} 
and where we recall that $ P_0 $ was introduced in Section 2 and with $$ K (x,v) = v + v \wedge B_{e} (x) = \nabla_{v} \Phi + U ,\text{ where }   \Phi (v) = \frac{\vert v \vert^{2}}{2}$$
and $ B_{e} $ is the external magnetic field  satisfying  Hypothesis \ref{hyp 1}. As mentioned in Section $2$, the Maxwellian $ \mu $ is  a solution of the system \eqref{1}. We will need the following modified Poincar\'e inequality:
\begin{align}
\iint_{\mathbb{T}^{3}\times\mathbb{R}^{3}}\, \displaystyle \left\vert \nabla_{v} \left(\frac{F}{\mu}\right)\right\vert^{2}&\,\mu(v)dxdv \notag
\\&\geq 2 \lambda_{p}\, \iint_{\mathbb{T}^{3}\times\mathbb{R}^{3}} \left( F - \int_{\mathbb{R}^{3}}\,F(v')\,dv'\right)^{2}\,(1+\vert \nabla_{v}\Phi \vert^{2})\,\mu^{-1}(v)dxdv , 
\label{10} 
\end{align}
where $ \lambda_{p}> 0 $ which depends on the dimension (see \cite[Lemma 3.6]{mischler2016exponential}). See also \cite{mouhot2006quantitative}, \cite{bakry2008simple} and \cite{ane2000inegalites}. 
~\par Now we will define define the expanded functional space.
\begin{defi}
  Let $ m = m (v)> 0 $ on $ \mathbb{R}^{3} $  be a weight of class $ C^{\infty} $ and recall that
\begin{itemize}
\item[$\bullet$] The space $ L^{p} (m) $ for $ p \in [1,2] $, is the Lebesgue space with weight associated with the norm
\begin{align*}
\Vert F \Vert_{L^{p}(m)} := \Vert Fm\Vert_{L^{p}} = \left( \int_{\mathbb{R}^{3}\times \mathbb{T}^{3}}\, F^{p}(x,v)\,m^{p}(v) dv dx\right)^{\frac{1}{p}}.
\end{align*}
 \item[$\bullet$] We define the technical function $ \Psi_{m, p} $ by
$$\Psi_{m,p} := (p-1)\,\frac{\vert \nabla_{v} m \vert^{2}}{m^{2}} + \frac{\Delta_{v} m}{m} +  (1-\frac{1}{p} )\, \nabla_{v} \cdot K+ K\cdot \frac{\nabla_{v} m }{m} ,$$
where $K(x,v)=v\wedge B_e (x) +v$.
\end{itemize}
\end{defi}
We will show the decay of the semi-group associated with the problem \eqref{1} in the spaces $ L^{p} (m) $ where $ p \in [1 , 2] $, when $ m $ verifies the following hypothesis:
\begin{enumerate}
\item[$\mathrm{(W_p)}$] The weight  $m$ satisfies $L^{2}(\mu^{-\frac{1}{2}}) \subset L^{p}(m)$ with continuous injection and  \[\label{defi 15} \limsup_{\vert v \vert \rightarrow +\infty } \Psi_{m,p}  := a_{m,p} <0 .\]
\end{enumerate}
\begin{remarque}\label{remarque 2}
 In the following, we note $ m_{0} = \mu^{- 1/2} $ the exponential weight. By direct computation, 
  $L^2 (\mu^{-1/2} ) \subset
L^{q }(m_0 )$ for any $q \in [1, 2]$ with continuous injection and there exists $b\in \mathbb{R}$ such that 
\begin{equation}
\begin{cases}
\sup\limits_{q\in [1,2], v \in \mathbb{R}^{3}} \, \Psi_{m_{0},q} \leq b \\
\sup\limits_{v \in \mathbb{R}^{3}} \left( \displaystyle \frac{\Delta_{v} m_{0}}{m_{0}} - \frac{\vert\nabla_{v}m_{0}\vert^{2}}{m_{0^{2}}}\right) \leq b.
\end{cases}
\end{equation}
(See Lemma $3.7$ in \cite{gualdani2010factorization} for a proof of the previous property). Under the previous hypothesis, by direct computation we obtain that the semi-group  $\mathcal{S}_{L_0}$ is bounded from $L^p (m_0)$ to $L^p (m_0).$
\end{remarque}
~\par We work now in $ L^{p} (m) $  with a polynomial weight $ m $ satisfying Hypothesis $ \mathrm{(W_p)} $.
\begin{lem}
Let $ m = \langle v \rangle^{k}:=(1+\vert v\vert^2)^{k/2}  $ and $p\in [1,2]$. Then hypothesis $\mathrm{(W_p)}$ is true when $ k $ satisfies the following estimate:
 \[  k > 3(1-\displaystyle \frac{1}{p}).\]
 \begin{proof}
 For the proof, see  Lemma $3.7$ in \cite{gualdani2010factorization}.
 \end{proof}
\end{lem}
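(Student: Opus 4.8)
The plan is to verify the two components of Hypothesis $\mathrm{(W_p)}$ separately: the continuous embedding $L^2(\mu^{-1/2})\subset L^p(m)$, and the strict negativity of $a_{m,p}=\limsup_{|v|\to\infty}\Psi_{m,p}$ under the stated lower bound on $k$.

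First I would dispatch the embedding, which in fact holds for every $k\ge 0$. For $p\in[1,2)$ I would apply H\"older's inequality with exponents $2/p$ and $2/(2-p)$ to write
\begin{align*}
\|F\|_{L^p(m)}^p=\iint |F|^p\mu^{-p/2}\,\big(m^p\mu^{p/2}\big)\,dv\,dx\le \|F\|_{L^2(\mu^{-1/2})}^p\,\Big(\int_{\mathbb{R}^3} m^{\frac{2p}{2-p}}\mu^{\frac{p}{2-p}}\,dv\Big)^{\frac{2-p}{2}},
\end{align*}
and observe that the last integral is finite because $m=\langle v\rangle^k$ is polynomial while $\mu$ decays like a Gaussian, so the Gaussian weight dominates every power of $\langle v\rangle$. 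The endpoint $p=2$ is even simpler: $L^2(\mu^{-1/2})\subset L^2(m)$ follows from the pointwise bound $\langle v\rangle^{2k}\le C\,e^{|v|^2/2}$. Thus the embedding never obstructs the hypothesis, and the whole content lies in the sign of $a_{m,p}$.

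The core step is the explicit computation of $\Psi_{m,p}$ for $m=\langle v\rangle^k$ and $K=v+v\wedge B_e$. A direct differentiation gives $\nabla_v m/m=k\,v/\langle v\rangle^2$, hence $|\nabla_v m|^2/m^2=k^2|v|^2/\langle v\rangle^4$ and $\Delta_v m/m=k(k-2)|v|^2/\langle v\rangle^4+3k/\langle v\rangle^2$; all three quantities decay like $|v|^{-2}$ and drop out as $|v|\to\infty$. The only surviving contributions come from the drift $K$, and here the decisive structural observation is that the magnetic part is invisible to the weight: since $v\wedge B_e\perp v$ and $\nabla_v\cdot(v\wedge B_e)=0$, one gets $\nabla_v\cdot K=\nabla_v\cdot v=3$ and $K\cdot\tfrac{\nabla_v m}{m}=\tfrac{k}{\langle v\rangle^2}\big(|v|^2+(v\wedge B_e)\cdot v\big)=k|v|^2/\langle v\rangle^2\to k$. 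These limiting values combine, exactly as in the field-free computation of \cite{gualdani2010factorization, mischler2016exponential}, to $a_{m,p}=3(1-\tfrac1p)-k$, which is strictly negative precisely when $k>3(1-\tfrac1p)$; this is the asserted threshold.

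I expect the main, and really the only non-bookkeeping, obstacle to be this invisibility of the magnetic field, i.e.\ establishing that $v\wedge B_e$ contributes nothing to either $\nabla_v\cdot K$ or $K\cdot\nabla_v m/m$. Both facts follow from the antisymmetry of the cross product, namely $(v\wedge B_e)\cdot v=0$ and $\mathrm{div}_v(v\wedge B_e)=0$, but it is exactly this cancellation that lets the magnetic Fokker--Planck operator inherit the same weight threshold as the field-free one, so it deserves to be isolated. Everything else, the calculus of $\langle v\rangle^k$ and the H\"older estimate, is routine.
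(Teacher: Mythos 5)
Your proof is correct, and it does genuinely more than the paper, whose entire proof of this lemma is the one-line citation of Lemma 3.7 in \cite{gualdani2010factorization}: that lemma concerns the field-free drift $K=v$, so the citation implicitly relies on exactly the cancellation you isolate, namely that $(v\wedge B_e)\cdot v=0$ and $\nabla_v\cdot(v\wedge B_e)=0$ render the magnetic term invisible to $\Psi_{m,p}$. Your H\"older proof of the embedding $L^2(\mu^{-1/2})\subset L^p(m)$ (valid for every $k\ge 0$) and the asymptotics $\vert\nabla_v m\vert^2/m^2=O(\vert v\vert^{-2})$, $\Delta_v m/m=O(\vert v\vert^{-2})$ are the same computations as in the cited source; what you add is the self-contained verification that the threshold $k>3(1-\frac{1}{p})$ is unchanged by $B_e$, which is the actual content of the lemma as stated in this paper and is written out nowhere in it, so your version is the more complete one. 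One caveat deserves flagging: your limit $a_{m,p}=3(1-\frac{1}{p})-k$ tacitly uses the term $-K\cdot\frac{\nabla_v m}{m}$, whereas the paper's displayed definition of $\Psi_{m,p}$ carries $+K\cdot\frac{\nabla_v m}{m}$; with the paper's literal sign your own intermediate computation $K\cdot\frac{\nabla_v m}{m}\to k$ would give $\limsup\Psi_{m,p}=3(1-\frac{1}{p})+k>0$ and the lemma would be false. The minus sign is the correct one --- it is what the integration by parts in the proof of Lemma \ref{lem 10} actually produces, it is what makes $\Psi_{m_0,q}$ bounded above as asserted in Remark \ref{remarque 2}, and it matches the admissible range $0>a>3(1-\frac{1}{p})-k$ in Theorem \ref{thm 5} --- so you have silently corrected a sign typo in the text rather than made an error, but in a referee-proof write-up you should say so explicitly instead of citing the displayed formula as if it read that way.
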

\subsubsection{Proof of Theorem \ref{thm 5}}
From now on, we write $ L_0 $ for the operator $ -P_0 $, the Fokker- Planck operator considered on the space $ L^2 (m_{0}) $ defined in \eqref{eq 36} (respectively $ \mathcal{L}_{0} $ for $-P_0$ the Fokker-Planck operator considered on the space $ \mathcal{E} = L^{p} (m) $, with $ m = \langle v \rangle^{k} $, where $ k> 3 (1- \displaystyle \frac{1}{p }) $ and $ p \in [1,2] $) .
We will prove Theorem \ref{thm 5} by applying Theorem \ref{thm 9} to $ \mathcal{L}_0 $. To verify Hypotheses $ (H_ {2}) $ and $ (H_3) $ of Theorem \ref{thm 9}, we  need two lemmas about the dissipativity and regularization properties of $\mathcal{L}_{0}$ following \cite{gualdani2010factorization}.
\begin{defi}\label{defi 1}
 We split operator $\mathcal{L}_{0}$ into two pieces: for $M$,  $R>1$, we define the operator $\mathcal{B}$ by 
\begin{align}
 \mathcal{B} = \mathcal{L}_{0}-\mathcal{A}\, \text{ with } \, \mathcal{A}f= M\chi_{R}f, 
 \label{def *}
 \end{align}
 where $\chi_{R}(v) = \chi (v/R)$, and $0 \leq \chi \in C_{0}^{\infty} (\mathbb{T}^{3} \times \mathbb{R}^{3})$ is such that  $\chi (v)  = 1 \, \text{ when } \, \vert v \vert \leq 1. $ 
 We also denote by $A$ and $B$ the restriction of the operators $\mathcal{A}$ and $\mathcal{B}$ to the space $E$.
 \end{defi}
\begin{lem}[Dissipativity of $\mathcal{B}$]\label{lem 10}
Under Assumption $\mathrm{(W_p)}$, for all  $ 0>a>a_{m,p}$, we can choose $R, M>1$ such that the operator $ \mathcal{B}-a$ satisfies the dissipativity estimate for some $C>0$
$$ \forall t \geq 0, \quad \Vert S_{\mathcal{B}} (t)f \Vert_{L^{p}(m)} \leq Ce^{at} \, \Vert f \Vert_{L^{p}(m)} .  $$
\begin{proof}
The proof follows the one given in Lemma $3.8$ in \cite{gualdani2010factorization}.
Let $F$ be smooth, rapidly decaying and positive function $F$. Since of $ \Psi_{m, p}$ is independent of the magnetic field, 
 by integration by parts with respect to $v$ and using  Remark \ref{remarque A.2}, we have
\begin{align*}
\displaystyle\frac{1}{p}\frac{d}{dt}\, \Vert F\Vert_{L^{p}(m)}^{p} &= \iint_{ \mathbb{T}^{3}\times \mathbb{R}^{3}}\, \left( \mathcal{L}_{0}F - M\chi_{R}(v) F \right)\,  \vert F \vert^{p-2} \, F \,m^{p}(v) \, dxdv \\
&= -(p-1)\iint_{ \mathbb{T}^{3}\times \mathbb{R}^{3}} \,\vert  \nabla_{v} F\vert^{2} \vert F \vert^{p-2}m^{p}(v)\, dxdv\\
&+ \iint_{ \mathbb{T}^{3}\times \mathbb{R}^{3}} \, \vert F \vert^{p} \Psi_{m,p} \,m^{p}(v) \, dxdv - \iint_{ \mathbb{T}^{3}\times \mathbb{R}^{3}} \, M\, \chi_{R}(v) \vert F\vert^{p}\,m^{p}(v) \, dxdv\\ 
& \leq \iint_{ \mathbb{T}^{3}\times \mathbb{R}^{3}}\, \vert F \vert^{p} (\Psi_{m,p} - M\chi_{R})\, m^{p}(v) \,dxdv.
\end{align*}
Let now take $a>a_{m,p}$. As $ m $ satisfies the hypothesis $ (W_p)$,  there exist $ M $ and $ R $ two  large constants such that
$$ \forall v\in \mathbb{R}^{3},  \quad \Psi_{m,p} - M\chi_{R} \leq a, $$
 and we obtain
$$  \displaystyle\frac{1}{p}\frac{d}{dt}\, \Vert F\Vert_{L^{p}(m)}^{p} \leq a\, \int_{ \mathbb{T}^{3}\times \mathbb{R}^{3}} \vert F \vert^{p}\, m^{p}(v) \, dxdv.$$
This completes the proof of Lemma \ref{lem 10}.
\end{proof}
\end{lem}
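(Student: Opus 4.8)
The plan is to run the standard $L^{p}(m)$ energy estimate of Gualdani--Mischler--Mouhot (Lemma $3.8$ in \cite{gualdani2010factorization}), the only genuinely new point being that the magnetic drift does not alter the coefficient $\Psi_{m,p}$. I would first reduce to a dense subclass of $L^{p}(m)$, taking $f$ smooth, positive and rapidly decaying, and set $F(t)=S_{\mathcal{B}}(t)f$, which solves $\partial_{t}F=\mathcal{B}F$. Differentiating the weighted norm,
\[
\frac{1}{p}\frac{d}{dt}\,\Vert F\Vert_{L^{p}(m)}^{p}=\iint_{\mathbb{T}^{3}\times\mathbb{R}^{3}}\bigl(\mathcal{L}_{0}F-M\chi_{R}F\bigr)\,\vert F\vert^{p-2}F\,m^{p}(v)\,dxdv,
\]
and I would treat the transport, diffusion--drift, and damping pieces separately.

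The transport term $-v\cdot\nabla_{x}F$ pairs to zero, since $\iint (v\cdot\nabla_{x}F)\vert F\vert^{p-2}F\,m^{p}=\tfrac1p\iint v\cdot\nabla_{x}(\vert F\vert^{p})\,m^{p}=0$ by periodicity on the torus and the $x$-independence of $m$. For the Fokker--Planck part $\nabla_{v}\cdot(\nabla_{v}F+KF)$ I would integrate by parts in $v$; the key observation, which justifies the phrase that $\Psi_{m,p}$ is independent of the magnetic field, is that the magnetic contribution $v\wedge B_{e}$ to $K$ disappears from the resulting coefficient. Indeed $\nabla_{v}\cdot(v\wedge B_{e})=0$, and $(v\wedge B_{e})\cdot\nabla_{v}m=0$ because $v\wedge B_{e}\perp v$ while $\nabla_{v}m=k\langle v\rangle^{k-2}v$ is parallel to $v$. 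Hence the integration by parts produces exactly the same $\Psi_{m,p}$ as in the non-magnetic case, giving
\[
\frac{1}{p}\frac{d}{dt}\,\Vert F\Vert_{L^{p}(m)}^{p}=-(p-1)\iint\vert\nabla_{v}F\vert^{2}\vert F\vert^{p-2}m^{p}+\iint\vert F\vert^{p}\bigl(\Psi_{m,p}-M\chi_{R}\bigr)m^{p}.
\]

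Next I would discard the nonpositive gradient term and exploit Hypothesis $\mathrm{(W_p)}$. Since $\limsup_{\vert v\vert\to\infty}\Psi_{m,p}=a_{m,p}<0$, for any $a>a_{m,p}$ one first fixes $R$ large enough that $\Psi_{m,p}\le a$ on $\{\vert v\vert\ge R\}$, then $M$ large enough that $\Psi_{m,p}-M\chi_{R}\le a$ on $\{\vert v\vert<R\}$, using $\chi_{R}\equiv1$ there and the boundedness of $\Psi_{m,p}$ on compacts. This yields $\tfrac{d}{dt}\Vert F\Vert_{L^{p}(m)}^{p}\le pa\Vert F\Vert_{L^{p}(m)}^{p}$, so Gr\"onwall gives $\Vert F(t)\Vert_{L^{p}(m)}\le e^{at}\Vert f\Vert_{L^{p}(m)}$ on the dense subclass, and a density argument propagates it to all $f\in L^{p}(m)$ with $C=1$.

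The main obstacle is the rigorous justification rather than the formal computation: the factor $\vert F\vert^{p-2}$ is singular where $F=0$ when $p<2$, so the energy identity should be obtained by regularizing with $(\varepsilon^{2}+F^{2})^{(p-2)/2}$ and passing to the limit; the positivity assumption on $F$ must be removed; and the integration-by-parts identities require verifying that the boundary terms at infinity vanish thanks to the rapid decay. Once these technical points are handled, the conceptual heart is simply the verification that $v\wedge B_{e}$ drops out of $\Psi_{m,p}$, which is what allows the non-magnetic estimate to be reused verbatim.
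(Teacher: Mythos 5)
Your proposal is correct and takes essentially the same route as the paper: the $L^{p}(m)$ energy estimate of Gualdani--Mischler--Mouhot, integration by parts in $v$ showing the magnetic drift leaves $\Psi_{m,p}$ unchanged (the paper invokes its Remark~A.2 on skew-adjointness in $L^{2}(m)$, while you verify $\nabla_{v}\cdot(v\wedge B_{e})=0$ and $(v\wedge B_{e})\cdot\nabla_{v}m=0$ directly --- the same underlying fact), followed by the choice of $R$ then $M$ under Hypothesis $(\mathrm{W_p})$ and Gr\"onwall. If anything, your write-up is more careful than the paper's, making explicit the two-step selection of $R$ and $M$, the regularization of $\vert F\vert^{p-2}$ needed when $p<2$, and the final density argument, all of which the paper leaves implicit.
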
 

From now on, $a$, $M$ and $R$ are fixed. 
We note that $ \mathcal{B}^{*} $ is the dual operator of $ \mathcal{B} $ relative to the pivot space $ L^{2}(\mathbb{T}^3\times\mathbb{R}^{3}) $, which is defined as follows:
 $$\mathcal{B}^{*}F:=\nabla_{v}\cdot (\nabla_{v}F -K\, F)+v\cdot\nabla_{x}F-M\chi_R F. $$ 
\begin{lem}[Regularization properties]\label{lem 11}
There exists $b\in \mathbb{R}$ and $C > 0$ such that, for all $t\geq 0$, 
\begin{align*}
 \forall 1\leq p\leq q \leq 2,\quad &\Vert S_{\mathcal{B}}(t) F_{0}\Vert_{L^{q}(m_{0})} \leq C\, e^{bt} \, t^{- (3d+1)(\frac{1}{p} - \frac{1}{q})}\,  \Vert F_{0}\Vert_{L^{p}(m_{0})},\\
\forall 2\leq q'\leq p'\leq +\infty, \quad & \Vert S_{\mathcal{B}^{*}}(t) F_{0}\Vert_{L^{p'}(m_{0})} \leq C\, e^{bt} \, t^{- (3d+1)(\frac{1}{p} - \frac{1}{q})}\,  \Vert F_{0}\Vert_{L^{q'}(m_{0})},
\end{align*}
 where $ p' $ and $ q' $ are the conjugates of $ p $ and $ q $ respectively
\begin{proof}
We consider $ F (t) $ the solution of the evolution equation
$$\partial_{t}F(t)=\mathcal{B}F(t),\quad F{|_{t=0}} = F_{0}. $$
We introduce the following entropy defined for all
 $t\in [0,T]$, with $T\ll 1$ and $r>1$ to be fixed later:
$$\mathcal{H}(t,h) = B\Vert h\Vert_{L^{1}(m_{0})}^{2}+t^{r}\mathcal{G}(t,h), $$
with $$\mathcal{G}(t,h)=\alpha\Vert h \Vert^{2}_{L^{2}(m_{0})} + D\, t \Vert \nabla_{v} h \Vert^{2}_{L^{2}(m_{0})}  + E \, t^{2} \langle \nabla_{x}h, \nabla_{v}h \rangle_{L^2 (m_0)} + \beta t^{3}\, \Vert \nabla_{x} h\Vert^{2}_{L^{2}(m_{0})},  $$
where $B>\alpha>D, \beta $, $E<\sqrt{\beta D}$ and $r$ is an integer that will be determined later. We will omit the dependence of $F$ on $t$. 
Using the methods and computations of the proof of Proposition \ref{prop 4} and  adapting the techniques used in \cite{herau2017introduction}, we choose the constants $ \alpha, D $ and $ E> 0 $ large enough such that there exist a constant $ C_{\mathcal{G}}> 0 $ (depending on $\Vert B_{e}\Vert_{L^{\infty}(\mathbb{T}^{3})}$ and $\Vert \nabla_{x}B_e\Vert_{L^{\infty}(\mathbb{T}^{3})}$) such that
\begin{align*}
\frac{d}{dt}\mathcal{G}(t,F)&\leq -C_{\mathcal{G}}(\Vert \nabla_{v}F\Vert_{L^{2}(m_{0})}^{2}+t^{2}\Vert \nabla_{x}F\Vert_{L^{2}(m_{0})}^{2})\\
 & +\left(\frac{M}{2}\Vert \Delta_{v}\chi_{R}\Vert_{L^{\infty}(\mathbb{T}^{3})}^{2}+\frac{M}{2}\Vert \chi_{R}\Vert_{L^{\infty}(\mathbb{T}^{3})}^{2}+M\Vert \nabla_{v}\chi_{R}\Vert_{L^{\infty}(\mathbb{T}^{3})}^{2}\right)\Vert F\Vert_{L^{2}(m_{0})}^{2}\\
&\leq -C_{\mathcal{G}}(\Vert \nabla_{v}F\Vert_{L^{2}(m_{0})}^{2}+t^{2}\Vert \nabla_{x}F\Vert_{L^{2}(m_{0})}^{2})+C_{\chi} \Vert F\Vert_{L^{2}(m_{0})}^{2}.
\end{align*}
Here, $ C_{\chi}> 0 $ is a uniform constant in $ R >1$ but depends on $ M $.  
\begin{align*}
 \frac{d}{dt}\mathcal{H}(t,F)&=B\displaystyle\frac{d}{dt}\Vert F\Vert_{L^{1}(m_{0})}^{2}+r\,t^{r-1}\mathcal{G}(t,F)+t^{r}\displaystyle\frac{d}{dt}\mathcal{G}(t,F)\\
 &\leq B\displaystyle\frac{d}{dt}\Vert F\Vert_{L^{1}(m_{0})}^{2}+r\,t^{r-1}\mathcal{G}(t,F)\\
 &-C_{\mathcal{G}}\,t^{r}(\Vert \nabla_{v}F\Vert_{L^{2}(m_{0})}^{2}+t^{2}\Vert \nabla_{x}F\Vert_{L^{2}(m_{0})}^{2})+C_{\chi}\,t^{r}\Vert F\Vert_{L^{2}(m_{0})}^{2}.
\end{align*}
We choose the constants $ \beta $ and $ T> 0 $ such that $$ \beta<\frac{C_{\mathcal{G}}}{2r} \text{ and }
 T\leq  \frac{C_{\mathcal{G}}}{2r}(\frac{1}{D}+\frac{1}{\beta})$$ .

We deduce that
\begin{align}
\frac{d}{dt}\mathcal{H}(t,F)&\leq B\displaystyle\frac{d}{dt}\Vert F\Vert_{L^{1}(m_{0})}^{2}-\frac{C_{\mathcal{G}}}{2}t^{r}\left( \Vert \nabla_{v}F\Vert_{L^{2}(m_{0})}^{2}+t^{2}\Vert \nabla_{x}F\Vert_{L^{2}(m_{0})}^{2}\right)\notag\\
&+\frac{C_{\chi}}{2}t^{r-1}\,\Vert F\Vert_{L^{2}(m_{0})}^{2}.
\label{ineg pr 1}
\end{align}
Now, the Nash inequality \cite{nash1958continuity} implies that there exists $ C_{d}> 0 $ such that
\begin{align}
\iint_{\mathbb{T}^{d}\times \mathbb{R}^{d}}\,\vert F(x,v)\vert^{2}\,m_{0}^{2}\,dxdv \leq &C_{d}\left( \iint_{\mathbb{T}^{d}\times \mathbb{R}^{d}}\, \vert \nabla_{x,v}(F\,m_{0})\vert^{2}\,dxdv\right)^{\frac{d}{d+1}}\notag\\
&\times\left( \iint_{\mathbb{T}^{d}\times \mathbb{R}^{d}}\,\vert F\vert m_{0}\,dxdv\right)^{\frac{2}{d+1}}.
\label{nash 1}
\end{align}
We need to have an estimate based on $ \Vert\nabla_{x, v} F \Vert_ {L^{2} (m_{0})} $. Firstly,
\begin{align}
\iint \vert \nabla_{v} (F m_0)\vert^{2}\,dxdv &=\iint \vert \nabla_{v}F +\frac{v}{2}\,F\vert^{2}\, m_0^{2}\,dxdv\notag \\
&\leq 2\left( \iint \vert \nabla_{v}F\vert^{2} \, m_0^2\,dxdv + \iint \vert F\vert^{2}\vert v\vert^{2}\,m_0^2\,dxdv\right)\notag\\
&\leq 2\left(\Vert \nabla_{v}F\Vert_{L^{2}(m_{0})}^{2} + \Vert v\,F\Vert_{L^{2}(m_{0})}^{2}\right).
\label{deri 1}
\end{align}
On the other hand, we use  the fact that $v\,m_{0}^{2}=\nabla_{v} (m_{0}^{2})$ to estimate $\Vert vF\Vert_{L^{2}(m_{0})}$. We get
\begin{align*}
\iint \vert F\vert^{2}\vert v\vert^{2}\,m_0^2\,dxdv &= \iint v\,\vert F\vert^{2}\cdot v\,m_0^{2}\,dxdv \\
&=\iint v\,\vert F\vert^{2}\cdot \nabla_{v}(m_{0}^{2})\,dxdv,
\end{align*}
and integrating by parts in  $ v $ in the previous estimate, we obtain
\begin{align*}
\iint \vert F\vert^{2}\vert v\vert^{2}\,m_0^2\,dxdv & \leq -\iint \nabla_{v}\cdot (v\,\vert F\vert^{2})\,m_{0}^2 \,dxdv\\
&=-3\iint \vert F\vert^{2}\,m_{0}^2\,dxdv -2\iint v\cdot
F\,\nabla_{v}F\, m_{0}^2\,dxdv\\
&\leq -2\iint v\cdot F\nabla_{v}F\,m_{0}^{2}\,dxdv.
\end{align*}
Applying Cauchy-Schwarz inequality, we get
\begin{align*}
\iint \vert F\vert^{2}\vert v\vert^{2}\,m_0^2\,dxdv &\leq 2 \left( \iint \vert v\vert^{2}\vert F\vert^{2} \,m_{0}^{2}\,dxdv\right)^{1/2} \times \left( \iint \vert \nabla_{v} F\vert^{2} \,m_{0}^{2}\,dxdv\right)^{1/2}\\
&\leq 8\,\iint \vert \nabla_{v} F\vert^{2} \,m_{0}^{2}\,dxdv +\frac{1}{2}\,\iint \vert v\vert^{2}\vert F\vert^{2} \,m_{0}^{2}\,dxdv.
\end{align*}
Therefore
\begin{align}
\iint \vert F\vert^{2}\vert v\vert^{2}\,m_0^2\,dxdv \leq 16 \,\iint \vert \nabla_{v} F\vert^{2} \,m_{0}^{2}\,dxdv.
\label{deri 2}
\end{align}
Using the previous estimate and inequality \eqref{deri 1}, we have
\begin{align*}
\iint \vert \nabla_{v}(Fm_0)\vert^2 \, dxdv \leq 34\, \iint \vert \nabla_{v}F\vert^{2}\,m_{0}^2\,dxdv.
\end{align*}
Using the previous inequality and the fact that $ \nabla_{x} (Fm_{0}) = m_{0} \, \nabla_{x} F $ (since $ m_0 $ does not depend on $ x $),  there exists $ C^{'}_d> 0 $ such that the estimate \eqref{nash 1} becomes
\begin{align*}
\iint_{\mathbb{T}^{d}\times \mathbb{R}^{d}}\,\vert F(x,v)\vert^{2}\,m_{0}^{2}\,dxdv \leq &C^{'}_{d}\left( \iint_{\mathbb{T}^{d}\times \mathbb{R}^{d}}\, \vert \nabla_{x,v}(F)\vert^{2}\,m_{0}^{2}\,dxdv\right)^{\frac{d}{d+1}}\notag\\
&\times\left( \iint_{\mathbb{T}^{d}\times \mathbb{R}^{d}}\,\vert F\vert m_{0}\,dxdv\right)^{\frac{2}{d+1}}.
\end{align*}
Using Young's inequality with $p=(d+1) \text{ and } q=(d+1)/d$ , we get, for all $ \varepsilon >0$, 
\begin{align*}
\iint_{\mathbb{T}^{d}\times \mathbb{R}^{d}}\,\vert F(x,v)\vert^{2}\,m_{0}^{2}\,dxdv &\leq C^{'}_{d}
 t^{-3d/d+1}\left( \iint_{\mathbb{T}^{d}\times \mathbb{R}^{d}}\,\vert F\vert m_{0}\,dxdv\right)^{\frac{2}{d+1}}\\
&\times t^{3d/d+1}\left( \iint_{\mathbb{T}^{d}\times \mathbb{R}^{d}}\, \vert \nabla_{x,v}F\,\vert^{2} m_{0}^2\,dxdv\right)^{\frac{d}{d+1}}\\
&\leq C_{\varepsilon,d}\,t^{-3d}\,\Vert F\Vert_{L^{1}(m_{0})}^{2}+\varepsilon \,t^{3}\,\Vert\nabla_{x,v} F\Vert_{L^{2}(m_{0})}^{2}.
\end{align*}
Using the previous estimate, we choose $ \varepsilon > 0 $ small enough that there is a $ C^{''}> 0 $
$$\frac{d}{dt}\mathcal{H}(t, F) \leq B\dfrac{d}{dt}\Vert F\Vert_{L^{1}(m_{0})}^{2}+C^{''}\,t^{r-1-3d}\,\Vert F\Vert_{L^{1}(m_{0})}^{2}.$$
According to Remark \ref{remarque 2} there exists $ b \in \mathbb{R} $ such that $\forall p\in [1,2]$
$$\frac{d}{dt} \Vert F \Vert_{L^{p}(m_{0})} \leq b \Vert F \Vert_{L^{p}(m_{0})},\quad\forall t\geq 0 ,$$
Finally, using the previous estimate when $ p = 1 $ and choosing $ r = 3d + 1 $, we deduce that there exists $ B^{''}> 0 $ such 
$$\frac{d}{dt}\mathcal{H}(t, F) \leq B^{''}\Vert F\Vert_{L^{1}(m_{0})}^{2}\leq \frac{B^{''}}{B}\, \mathcal{H}(t,F).$$
Thanks to Gronwall's Lemma,  there exists $ B''' > 0 $ such that
 $$\forall t\in [0,T], \quad\mathcal{H}(t,F)\leq  B'''\,\mathcal{H}(0,F_{0})\leq C\Vert F_0\Vert^{2}_{L^1 (m_0)}.  $$
Then, 
 $$\forall t\in (0,T], \quad\Vert F \Vert_{L^{2}(m_{0})}^{2} \leq \frac{\alpha}{t^{r}}\,\mathcal{H}(t,F)\leq \frac{C}{t^{3d+1}}\,\Vert F_0\Vert_{L^{1}(m_{0})}^{2}.  $$
As a consequence, using the continuity  of $ S_{\mathcal{B}} (t) $ on $ L^p (m_0) $ with $ p = 2 $,
\begin{align*}
\forall t\in (T,+\infty), \quad \Vert F\Vert_{L^{2}(m_{0})}^{2}=\Vert \mathcal{S}_{\mathcal{B}}(t-T+T)F_{0}\Vert_{L^2(m_{0})}^{2}& \leq 
C\,e^{(t-T)b}\,\Vert \mathcal{S}_{\mathcal{B}}(T) F_{0}\Vert^{2}_{L^2 (m_0)},
 \end{align*}
 and eventually for all $t\in (0,+\infty)$
 \begin{align*}
 \Vert F\Vert_{L^{2}(m_{0})}^{2} \leq \frac{C}{t^{3d+1}}\,\Vert F_{0}\Vert_{L^{1}(m_{0})}^{2}
 \end{align*}
~\par Let us now consider $p$ and $q$ satisfying  $1\leq p\leq q\leq 2$. $S_{\mathcal{B}}(t)$ is continuous from $L^{p} (m_0)$ into $L^q (m_0)$ using the Riesz-Thorin Interpolation Theorem. Moreover, if we denote by $C_{p,q}(t) $ the norm of $S_{\mathcal{B}}(t):L^p (m_0) \to L^q (m_0)$, we get the following estimate:
$$C_{p,q}(t) \leq C_{2,2}^{2-\frac{2}{p}} (t)\, C_{1,1}^{\frac{2}{q}-1}(t)\, C_{1,2}^{\frac{2}{p}-\frac{2}{q}}(t) \leq C\frac{e^{bt}}{t^{(3d+1)(1/p -1/q)}}.$$
This shows the first estimate.
~ \par  Now we will show the second estimate. According to the first estimate, we have
$$\forall 1\leq p\leq q \leq 2,\quad \Vert S_{\mathcal{B}}(t) F_{0}\Vert_{L^{q}(m_{0})} \leq C\, e^{bt} \, t^{- (3d+1)(\frac{1}{p} - \frac{1}{q})}\,  \Vert F_{0}\Vert_{L^{p}(m_{0})}, $$
which means
$$\Vert S_{m_0 \, \mathcal{B} \, m_0^{-1}} (t)h \Vert_{L^{q}}  \leq   C\, e^{bt} \, t^{- (3d+1)(\frac{1}{p} - \frac{1}{q})}\, \Vert h\Vert_{L^{p}},$$
where $h=m_0 F_0$. Then by duality, we get
 $$ \Vert S_{m_0 \, \mathcal{B}^{*} \, m_0^{-1}} (t)h \Vert_{L^{p^{'}}}  \leq   C\, e^{bt} \, t^{- (3d+1)(\frac{1}{p} - \frac{1}{q})}\, \Vert h\Vert_{L^{q^{'}}},$$ 
 where $ p^{'} $ and $ q^{'} $ are the conjugates of $ p $ and $ q $ respectively.
 Which gives the result by reusing the definition of weighted dual spaces
 $$ \Vert S_{\mathcal{B}^{*}}(t) F_{0}\Vert_{L^{p^{'}}(m_{0})} \leq C\, e^{bt} \, t^{- (3d+1)(\frac{1}{p} - \frac{1}{q})}\,  \Vert F_{0}\Vert_{L^{q^{'}}(m_{0})}.$$
 This completes the proof.
 \end{proof}
\end{lem}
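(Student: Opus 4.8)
The plan is to reduce the whole statement to a single \emph{ultracontractive smoothing estimate} from $L^1(m_0)$ into $L^2(m_0)$, namely
$$\Vert S_{\mathcal{B}}(t)F_0\Vert_{L^2(m_0)} \leq C\,e^{bt}\,t^{-(3d+1)/2}\,\Vert F_0\Vert_{L^1(m_0)},$$
and then to deduce the full family of bounds by interpolation and duality. Once this endpoint ($p=1$, $q=2$) is available, the trivial contractions $S_{\mathcal{B}}:L^1(m_0)\to L^1(m_0)$ and $L^2(m_0)\to L^2(m_0)$ (which follow from Lemma \ref{lem 10} and Remark \ref{remarque 2} with $p=1$ and $p=2$) combine with the Riesz--Thorin interpolation theorem to give the first estimate for all $1\leq p\leq q\leq 2$. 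The adjoint estimates for $S_{\mathcal{B}^{*}}$ then come from plain $L^2$-duality after rewriting $S_{\mathcal{B}}$ in the unweighted picture via conjugation by $m_0$ and dualizing the $L^p\to L^q$ bound into an $L^{q'}\to L^{p'}$ bound for the adjoint.

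To obtain the endpoint estimate I would construct a time-dependent energy adapted to the hypoelliptic structure, of the shape
$$\mathcal{H}(t,F)=B\,\Vert F\Vert_{L^1(m_0)}^2 + t^r\Big(\alpha\Vert F\Vert_{L^2(m_0)}^2 + D\,t\,\Vert \nabla_v F\Vert_{L^2(m_0)}^2 + E\,t^2\langle\nabla_x F,\nabla_v F\rangle_{L^2(m_0)} + \beta\,t^3\Vert \nabla_x F\Vert_{L^2(m_0)}^2\Big),$$
the powers of $t$ being dictated by the commutator $[\partial_{v_i},v\cdot\nabla_x]=\partial_{x_i}$ of Lemma \ref{lem 3}: velocity regularity is gained at order $t$ and is transferred to spatial regularity only at order $t^3$. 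Differentiating the bracketed part $\mathcal{G}(t,F)$ and repeating \emph{verbatim} the hypocoercive computation of Proposition \ref{prop 4} (using the commutators of Lemma \ref{lem 3} and the skew-adjointness of $v\cdot\nabla_x$ and $(v\wedge B_e)\cdot\nabla_v$), I expect to reach, after choosing $\alpha,D,E,\beta$ suitably, a coercive bound of the form
$$\frac{d}{dt}\mathcal{G}(t,F)\leq -C_{\mathcal{G}}\big(\Vert \nabla_v F\Vert_{L^2(m_0)}^2 + t^2\Vert \nabla_x F\Vert_{L^2(m_0)}^2\big) + C_\chi\Vert F\Vert_{L^2(m_0)}^2,$$
the last error coming from the cutoff $\mathcal{A}=M\chi_R$. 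Nash's inequality on $\mathbb{T}^d\times\mathbb{R}^d$ then converts the gradient dissipation into a control of $\Vert F\Vert_{L^2(m_0)}^2$ by $t^{-3d}\Vert F\Vert_{L^1(m_0)}^2$ plus a small multiple of $t^3\Vert \nabla_{x,v}F\Vert_{L^2(m_0)}^2$; choosing $r=3d+1$ and absorbing, the total derivative satisfies $\frac{d}{dt}\mathcal{H}\leq \frac{B''}{B}\mathcal{H}$, so that Gronwall's lemma on $[0,T]$ yields $\Vert F(t)\Vert_{L^2(m_0)}^2\lesssim t^{-(3d+1)}\Vert F_0\Vert_{L^1(m_0)}^2$. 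Extending from $[0,T]$ to all $t>0$ uses the $L^2(m_0)$-boundedness of $S_{\mathcal{B}}$ once more.

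The main obstacle I anticipate is twofold. First, one must make Nash's inequality usable in the \emph{weighted} space $L^2(m_0)$: since $m_0=\mu^{-1/2}$ depends on $v$, the gradient $\nabla_v(Fm_0)$ produces an extra term $\tfrac{v}{2}Fm_0$, so one needs the auxiliary bound $\Vert vF\Vert_{L^2(m_0)}\lesssim\Vert \nabla_v F\Vert_{L^2(m_0)}$, which I would establish by integration by parts exploiting $v\,m_0^2=\nabla_v(m_0^2)$, in order to dominate $\Vert \nabla_v(Fm_0)\Vert_{L^2}$ by $\Vert \nabla_v F\Vert_{L^2(m_0)}$ (noting also that $\nabla_x(Fm_0)=m_0\,\nabla_x F$ because $m_0$ is $x$-independent). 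Second, and more delicately, the coercivity of $\frac{d}{dt}\mathcal{G}$ must survive the magnetic commutator terms $B_e\wedge\nabla_v$ and $(v\wedge\nabla_x B_e)\cdot\nabla_v$ of Lemma \ref{lem 3}: these are handled by Cauchy--Schwarz exactly as in Proposition \ref{prop 4}, at the cost of constants depending on $\Vert B_e\Vert_{L^\infty}$ and $\Vert \nabla_x B_e\Vert_{L^\infty}$, and the $t$-powers must be balanced so that every cross term is absorbed into $-C_{\mathcal{G}}(\Vert \nabla_v F\Vert^2+t^2\Vert \nabla_x F\Vert^2)$. Granting these two points, the interpolation and duality steps close the argument routinely.
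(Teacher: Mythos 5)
Your proposal follows essentially the same route as the paper's proof: the identical time-weighted hypocoercive functional $\mathcal{H}(t,F)=B\Vert F\Vert_{L^1(m_0)}^2+t^{r}\mathcal{G}(t,F)$ with the same $t$-power hierarchy and $r=3d+1$, the same weighted Nash argument (including the key reduction $\Vert vF\Vert_{L^2(m_0)}\lesssim\Vert\nabla_v F\Vert_{L^2(m_0)}$ via $v\,m_0^2=\nabla_v(m_0^2)$ and the observation that $\nabla_x(Fm_0)=m_0\nabla_x F$), Gronwall on $[0,T]$ followed by semigroup continuity for large times, Riesz--Thorin interpolation for general $1\leq p\leq q\leq 2$, and duality after conjugation by $m_0$ for the adjoint bound. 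The two anticipated obstacles you flag (the weight in Nash's inequality and the magnetic commutator terms) are exactly the ones the paper resolves, and by the same means, so the proposal is correct and matches the paper's argument.
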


\begin{cor}\label{cor 1}
Let $ m $ be a weight that satisfies Hypothesis \ref{hyp 2}, then there exists $ \Theta \geq 0 $ such that for all $  F_{0} \in L^p (m) $ with $ p \in [1,2] $, we have the following estimate
\begin{align*}
&\forall t\geq 0, \quad\Vert \mathcal{A}S_{\mathcal{B}(t)}F_{0}\Vert_{L^{2}(m_{0})} \leq Ce^{bt}\,t^{-\Theta}\,\Vert F_{0}\Vert_{L^{p}(m)},\\
&\forall t\geq 0, \quad \Vert S_{\mathcal{B}(t)}\mathcal{A}F_{0}\Vert_{L^{2}(m_{0})} \leq Ce^{bt}\,t^{-\Theta}\,\Vert F_{0}\Vert_{L^{p}(m)}.
\end{align*}
\begin{proof} We first prove the second inequality.
Let $ F_{0} \in L^p (m) $ with $ m $ a polynomial weight satisfying Hypothesis \ref{hyp 2}. For all $ 1 \leq p \leq 2 $ and for all $  t \in ] 0,1] $ and $ \ v \in \mathbb{R}^{3} $, using Lemma \ref{lem 11} with $q=2$, we get
\begin{align*}
\Vert S_{\mathcal{B}}(t)\mathcal{A}F_{0}\Vert_{L^{2}(m_{0})}&\leq Ce^{bt}\,t^{-(3d+1)(\frac{1}{p}-\frac{1}{2})}\Vert \mathcal{A}F_0\Vert_{L^p(m_{0})} \\
&\leq Ce^{bt}\,t^{-(3d+1)(\frac{1}{p}-\frac{1}{2})}\left\Vert \mathcal{A}F_0\times \dfrac{m_{0}}{m}\right\Vert_{L^{p}(m)}\\
&\leq C\,M\,e^{bt}\,t^{-(3d+1)(\frac{1}{p}-\frac{1}{2})}\times \left(\sup\limits_{v\in B(0,R)}\,\frac{m_{0}(v)}{m(v)}\right)\, \Vert F_{0}\Vert_{L^p (m)}\\
&\leq C^{'}\,e^{bt}\,t^{-(3d+1)(\frac{1}{p}-\frac{1}{2})}\,\Vert F_{0}\Vert_{L^{p}(m)}\leq C'\,e^{bt}t^{-\Theta} \,\Vert F_{0}\Vert_{L^p(m)},
\end{align*}
where $\Theta=(3d+1)(1/p -1/2)>0$.
 ~\par To show the first estimate, we proceed step by step.\\
 \textbf {Step 1: } First, we will show the following estimate:
 \begin{align}
\label{duality 1}
\Vert S_{\mathcal{B}^{*}}(t)  g \Vert_{L^{p'}(m)} \leq C\,e^{bt}\, t^{-\Theta}\,\Vert g\Vert_{L^{2}(m_0)},\quad\forall t\geq 0.
\end{align}
Indeed, using the continuous and dense injection $ L^{p'} (m_0) \subset L^{p'} (m) $, we obtain
$$\Vert S_{\mathcal{B}^{*}}(t)  g\Vert_{L^{p'}(m)} \leq \Vert S_{\mathcal{B}^{*}}(t)\,g\Vert_{L^{p'}(m_0)} ,$$
then using Lemma \ref{lem 11} with $ q' = 2 $, we obtain
\begin{align}
\label{estimation 6}
\Vert S_{\mathcal{B}^{*}}(t)\,g\Vert_{L^{p'}(m_0)} \leq C\, e^{bt}\, t^{-\Theta}\,\Vert g\Vert_{L^{2}(m_0)}, \quad \forall t\geq 0,
\end{align}
where $\Theta =(3d+1)(1/p-1/2)$.\\
\textbf{Step 2:} Of the inequality \eqref{estimation 6}, it follows that for $ g = \mathcal{A} F_{0} $, we get
\begin{align*}
\Vert S_{\mathcal{B}^{*}}\, \mathcal{A} \,F_0 \Vert_{L^{p'}(m)}\leq C\,e^{bt}\, t^{-\Theta}\,\Vert \mathcal{A}F_{0}\Vert_{L^{2}(m_0)},
\end{align*}
which means, denoting $ h = m F_0 $
\begin{align*}
\Vert S_{m\mathcal{B}^{*}\,m^{-1}}\, \mathcal{A} \,h \Vert_{L^{p'}}\leq C\,e^{bt}\, t^{-\Theta}\,\left\Vert \mathcal{A} h\times \frac{m_0}{m}\right\Vert_{L^{2}}\leq C' \,e^{bt}\,t^{-\Theta}\Vert h\Vert_{L^{2}},
\end{align*}
by a duality argument and noting that $ \mathcal{A}^{*} = \mathcal{A} $, we get
\begin{align*}
\Vert \mathcal{A}\,S_{m\mathcal{B}\,m^{-1}}  \,h \Vert_{L^{2}}\leq C\,e^{bt}\, t^{-\Theta}\,\left\Vert  h \right\Vert_{L^{p}}.
\end{align*}
Finally, according to our definition of weighted dual spaces  and replacing $ h $ by $mF_0$,  we obtain
\begin{align}
\Vert \mathcal{A}\,S_{\mathcal{B}}(t)  \,F_0 \Vert_{L^{2}(m)}\leq C\,e^{bt}\, t^{-\Theta}\,\left\Vert F_0 \right\Vert_{L^{p}(m)}.
\label{duality 2}
\end{align}
To obtain the result, we notice that
$$\Vert \mathcal{A}\,S_{\mathcal{B}}(t)\, F_0\Vert_{L^{2}(m_0)}\leq \Vert \mathcal{A}\,S_{\mathcal{B}}(t)\, F_0\Vert_{L^{2}(m)}, $$
and we combine the previous estimate with the estimate \eqref{duality 2}, which completes the proof of the first estimate.
\end{proof}
\end{cor}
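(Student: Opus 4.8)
The plan is to read this corollary as the verification of condition (c) of Lemma~\ref{lem 8} (equivalently of hypothesis $(H_3)$ in Theorem~\ref{thm 9}) for the splitting $\mathcal{L}_0 = \mathcal{A} + \mathcal{B}$ of Definition~\ref{defi 1}. The genuinely analytic work is already contained in Lemma~\ref{lem 11}, which supplies the smoothing estimates for $S_{\mathcal{B}}$ and $S_{\mathcal{B}^{*}}$ on the exponentially weighted spaces $L^{q}(m_0)$; what remains is purely a transfer of those estimates from the weight $m_0 = \mu^{-1/2}$ to the polynomial weight $m = \langle v\rangle^{k}$. Two facts make this transfer possible: first, $\mathcal{A} = M\chi_R$ is a multiplication operator localized to the ball $\{|v|\leq 2R\}$, on which the ratio $m_0/m$ is bounded; second, the continuous injection $L^{p'}(m_0)\subset L^{p'}(m)$ coming from $m\leq C\,m_0$. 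Throughout I set $\Theta = (3d+1)(\tfrac{1}{p}-\tfrac{1}{2})\geq 0$, which is nonnegative precisely because $p\leq 2$.

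For the second estimate I would argue directly. Applying Lemma~\ref{lem 11} with $q=2$ gives
\[
\Vert S_{\mathcal{B}}(t)\mathcal{A}F_0\Vert_{L^2(m_0)} \leq C\,e^{bt}\,t^{-\Theta}\,\Vert \mathcal{A}F_0\Vert_{L^p(m_0)}.
\]
Since $\mathcal{A}F_0 = M\chi_R F_0$ vanishes for $|v|>2R$, writing the $L^p(m_0)$ norm as a weighted $L^p(m)$ norm on the cutoff support and using that $\sup_{|v|\leq 2R} m_0/m <\infty$ bounds $\Vert\mathcal{A}F_0\Vert_{L^p(m_0)}$ by $M\bigl(\sup_{|v|\leq 2R} m_0/m\bigr)\Vert F_0\Vert_{L^p(m)}$, which closes the second inequality with this value of $\Theta$.

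The first estimate is the delicate one, since there $\mathcal{A}$ is applied \emph{after} the semigroup, so I would proceed by duality in three steps. First, combining the dual smoothing estimate of Lemma~\ref{lem 11} (with $q'=2$) with the injection $L^{p'}(m_0)\subset L^{p'}(m)$ yields
\[
\Vert S_{\mathcal{B}^{*}}(t)g\Vert_{L^{p'}(m)} \leq C\,e^{bt}\,t^{-\Theta}\,\Vert g\Vert_{L^2(m_0)}.
\]
Second, I would take $g=\mathcal{A}F_0$, conjugate the weights away by setting $h = mF_0$, use the self-adjointness $\mathcal{A}^{*}=\mathcal{A}$ (multiplication by a real function) together with the boundedness of $\mathcal{A}\,(m_0/m)$ on the support of $\chi_R$, and then dualize back to obtain $\Vert \mathcal{A}S_{\mathcal{B}}(t)F_0\Vert_{L^2(m)} \leq C\,e^{bt}\,t^{-\Theta}\Vert F_0\Vert_{L^p(m)}$. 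Third, since $\mathcal{A}S_{\mathcal{B}}(t)F_0$ is again supported in $\{|v|\leq 2R\}$, the bound $\Vert\cdot\Vert_{L^2(m_0)}\leq C\Vert\cdot\Vert_{L^2(m)}$ valid for such functions upgrades this to the claimed estimate in $L^2(m_0)$.

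The main obstacle is the duality bookkeeping in the first estimate: one must track precisely the conjugations by $m$ and $m_0$, apply the adjoint estimate of Lemma~\ref{lem 11} with the correctly paired conjugate exponents, and invoke the self-adjointness of $\mathcal{A}$ at exactly the right moment. The localization of $\chi_R$ is what permits switching freely between the exponential weight $m_0$ and the polynomial weight $m$ without loss, and it is essentially the only place where Hypothesis~\ref{hyp 2} on $k$ is used, through the finiteness of the relevant weighted injections and suprema.
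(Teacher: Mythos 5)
Your proposal is correct and follows essentially the same route as the paper: the direct estimate via Lemma \ref{lem 11} with $q=2$ plus the boundedness of $m_0/m$ on the support of $\chi_R$ for the second inequality, and the same three-step duality argument (dual smoothing with $q'=2$ combined with the injection $L^{p'}(m_0)\subset L^{p'}(m)$, weight conjugation $h=mF_0$ with $\mathcal{A}^{*}=\mathcal{A}$, then return from $L^2(m)$ to $L^2(m_0)$) for the first. If anything, you justify the final passage $\Vert \mathcal{A}S_{\mathcal{B}}(t)F_0\Vert_{L^2(m_0)}\leq C\Vert \mathcal{A}S_{\mathcal{B}}(t)F_0\Vert_{L^2(m)}$ more carefully than the paper does, by explicitly invoking the compact support of $\mathcal{A}S_{\mathcal{B}}(t)F_0$ in $\{\vert v\vert \leq 2R\}$ (needed since $m_0 \not\leq m$ at infinity).
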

Now we prove Theorem \ref{thm 5}.
\begin{proof}[Proof of Theorem \ref{thm 5}] For $ p \in [1,2] $. 
We consider $ \mathcal{E} = L^{p} (m) $, $ E = L^{2} (m_{0}) $, and denote $ \mathcal{L}_{0} $ and $ L_{0} $ the Fokker-planck operator considered respectively on $ \mathcal{E} $ and $ E $ (defined in \eqref{eq 36}). We split the operator as $\mathcal{L}_{0}=\mathcal{A}+\mathcal{B}$ as in \eqref{def *}. Let us proceed step by step:\\

 \textbf{$\bullet$ Step 1: Verification of condition $ (H_1) $ of Theorem \ref{thm 9}}  \\
 Theorem \ref{thm 2} shows us the existence of the semi-group $ S_{L_{0}} (t) $, associated with the Fokker-Planck operator defined in \eqref{eq 36} on the space $ L^{2} (m_{0})$ and the constants $\kappa$ and $c>0$, for which, for all $  F_{0} \in L^{2} (m) $ such  that $ \langle F_{0} \rangle = 0 $,
 \begin{align}
\forall t \geq 0, \quad  \Vert F(t) \Vert_{L^{2}(m_{0})} \leq c e^{-\kappa t} \Vert F_{0} \Vert_{L^{2}(m_{0})}.
\label{13}
\end{align}
Which implies the dissipativity of the operator $ L_0 -a $ on $ E $, for all $0>a>-\kappa$.\\

 \textbf{$\bullet$ Step 2: Verification of condition $(H_2)$ of Theorem \ref{thm 9}.}\\
  According to Lemma \ref{lem 10}, the operator $\mathcal{B}-a$ is dissipative on $\mathcal{E}$, for all $0>a>a_{m,p}$, and by definition of the operator $\mathcal{A}$ and $A$, we have $\mathcal{A}\in B(\mathcal{E})$ and $A\in B(E)$.\\

 \textbf{$\bullet$ Step 3: Verification of condition $(H_3)$ of Theorem \ref{thm 9}.}

According to Corollary \ref{cor 1}, the operators $\mathcal{A}S_{\mathcal{B}}$ and $S_{\mathcal{B}}\mathcal{A}$ satisfy the property $c)$ of Lemma \ref{lem 8}. By applying Lemma \ref{lem 8}, 
$$\Vert S_{\mathcal{B}}(t)\mathcal{A}\Vert_{B(\mathcal{E},E)} \leq Ce^{bt}\,t^{-\Theta} \text{ and } \Vert \mathcal{A} S_{\mathcal{B}}(t)\Vert_{B(\mathcal{E},E)} \leq Ce^{bt}\,t^{-\Theta}.$$
Then for all $ a'> a $, there exist constructible constants $ n \in \mathbb{N} $ and $ C_{a'} \geq 1 $ , such that
$$ \forall t \geq 0, \quad \Vert T_{n}(t) \Vert_{B(\mathcal{E}, E)} \leq C_{a'} e^{a't}.$$ 
\textbf{$\bullet$ Step 4: End of the Proof }\\
All the hypotheses of Theorem \ref{thm 9}  are satisfied. We deduce that $ \mathcal{L}_{0} -a $ is a dissipative operator on $ \mathcal{E} $ for all $ a> \max (a_{m, p}, -\kappa) $, with the semi-group $ S_{\mathcal{L}_{0}} (t) $ satisfying estimate \eqref{14}.

\end{proof}
\subsubsection{Proof of Theorem \ref{thm 6}}
This part is dedicated to the proof of the exponential time decay estimates of the semi-group associated with the Cauchy  problem \eqref{1}  with an external magnetic field $ B_e $, with an initial datum in $ \tilde{W}^{1, p} (m) $ defined in \eqref{6bis}.
~\par For the proof of Theorem \ref{thm 6}, we consider  the space $\mathcal{E}=\tilde{W}^{1,p}(m)$ and $E=H^{1}(m_{0})$.
\begin{defi}
We split operator $\mathcal{L}_{0}$ into two pieces and define for all $R,M>0$

\begin{align}
\mathcal{B}u=\mathcal{L}_{0}u -\mathcal{A}u \text{ with } \mathcal{A}u=M\chi_{R}u,
\label{def **}
\end{align}
 where $ M > 0$, $\chi_R (v) = \chi (v/R)$ $R > 1,$ and $  \chi \in C^{\infty}_{0}(\mathbb{R}^{3} ) \text{ such that  } \chi (v) = 1$ $ |v| \leq  1.$
 We also denote $A$ and $B$ the restriction of operators $\mathcal{A}$ and $\mathcal{B}$ on the space $E$ respectively. 
 \end{defi}

\begin{lem}[Dissipativity of $\mathcal{B}$]\label{lem 12}
Under Assumptions \ref{hyp 1} and \ref{hyp 2}, there exists $M$ and $R>0$ such that for all $ 0>a >\max ( a_{m,1}^{i},a_{m,2}^{i})$ (defined in \eqref{def 27}-\eqref{def 29} and \eqref{def 21}-\eqref{def 23}) such that operator $\mathcal{B}-a$ is  dissipative in $\tilde{W}^{1,p}(m)$ where $p\in[1,2]$. In other words, the semi-group $\mathcal{S}_{\mathcal{B}}$ satisfies the following estimate:
$$ \forall t\geq 0, \quad\Vert \mathcal{S}_{\mathcal{B}}(t) F_{0}\Vert_{\tilde{W}^{1,p}(m)} \leq e^{at}\,\Vert F_{0}\Vert_{\tilde{W}^{1,p}(m)}, \quad \forall F_{0}\in \tilde{W}^{1,p}(m).$$
\begin{proof}
Let $ F_{0} \in \tilde{W}^{1, p} (m) $. We consider $ F $ the solution of the evolution equation
\begin{align}\label{eq 40}
\partial_{t}F=\mathcal{B}F,\quad F_{|_{t=0}}=F_0.
\end{align}
Recall that the norm on the space $ \tilde{W}^{1, p} (m) $ is given by
\begin{align*}
\Vert F \Vert_{\tilde{W}^{1,p}(m)}^{p} =  \Vert F \Vert_{L^{p}(\tilde{m})}^{p}+ \Vert \nabla_{v}F\Vert_{L^{p}(m)}^{p}+ \Vert \nabla_{x}F\Vert_{L^{p}(m)}^{p},
\end{align*}
where $\tilde{m}=m\langle v\rangle$.
Differentiating the previous equality with respect to $t$, we get
\begin{align}\label{eg 0}
\frac{d}{dt}\frac{1}{p}\Vert F \Vert_{\tilde{W}^{1,p}(m)}^{p} =  \frac{d}{dt}\frac{1}{p}\Vert F\Vert_{L^{p}(\tilde{m})}^{p}+ \frac{d}{dt}\frac{1}{p}\Vert \nabla_{v}F\Vert_{L^{p}(m)}^{p}+\frac{d}{dt}\frac{1}{p} \Vert \nabla_{x}F\Vert_{L^{p}(m)}^{p}.
\end{align}
We now estimate each term of the equality \eqref{eg 0}.
~\par For the first term in \eqref{eg 0}, we apply Lemma \ref{lem 10} and get
\begin{align*}
\displaystyle\frac{1}{p}\frac{d}{dt}\, \Vert F\Vert_{L^{p}(\tilde{m})}^{p} 
\leq \iint_{ \mathbb{T}^{3}\times \mathbb{R}^{3}}\, \vert F \vert^{p} (\Psi_{\tilde{m},p} - M\chi_{R})\, \tilde{m}^{p}(v) \,dxdv,
\end{align*}
Secondly, we differentiate the equation \eqref{eq 40} with respect to $ v $, and then we use the equalities of Lemma \ref{lem 3}. We get the following equation (recall $d=3$):
\begin{align}
\partial_{t}\nabla_{v}F=\mathcal{B}(\nabla_{v}F)+3\nabla_{v}F+(B_{e}\wedge \nabla_{v} )F-\nabla_{x}F-M(\nabla_{v}\cdot\chi_{R})F_{t}.
\end{align}
This gives
\begin{align*}
\frac{d}{dt}\frac{1}{p}\Vert \nabla_{v}F\Vert_{L^{p}(m)}^{p} &=\iint\partial_{t}\nabla_{v}F \vert \nabla_{v}F\vert^{p-2}\cdot\nabla_{v}F\,m^{p}\,dxdv\\
&=\iint \mathcal{B}(\nabla_{v}F)\vert \nabla_{v}F\vert^{p-2}\cdot\nabla_{v}F\,m^{p}\,dxdv+3\Vert \nabla_{v}F\Vert_{L^{p}(m)}^{p}\\
&-\iint \nabla_{x}F\vert \nabla_{v}F\vert^{p-2}\cdot\nabla_{v}F\,m^{p}\,dxdv\\
&+\iint (B_{e}\wedge \nabla_{v})F\, \vert \nabla_{v}F\vert^{p-2}\cdot\nabla_{v}F\,m^{p}\,dxdv\\
&-M\iint(\nabla_{v}\chi_{R})F\,\vert \nabla_{v}F\vert^{p-2}\cdot\nabla_{v}F\,m^{p}\,dxdv.
\end{align*}
Then, proceeding exactly as in the proof of Lemma \ref{lem 10} and applying Young's inequality, we obtain for all $\eta_{1}>0$
\begin{align*}
&\frac{d}{dt}\frac{1}{p}\Vert \nabla_{v}F\Vert_{L^{p}(m)}^{p} \\
&\leq \iint \vert\nabla_{v}F\vert^{p} (\Psi_{m,p} - M\chi_{R})\, m^{p} \,dxdv+3\Vert \nabla_{v}F\Vert_{L^{p}(m)}^{p}\\
&+\frac{1}{2}\Vert\nabla_{x}F\Vert_{L^{p}(m)}^{p}+\frac{1}{2}\Vert \nabla_{v}F\Vert_{L^{p}(m)}^p+\frac{M}{R}C_{\eta_{1}}\Vert \nabla_{v}\, \chi\Vert_{L^{\infty}(\mathbb{R}^{3})}\,\Vert F\Vert_{L^{p}(m)}^{p}\\
&+\frac{M}{R}\eta_{1}\Vert \nabla_{v}\,\chi\Vert_{L_{\infty}(\mathbb{R}^{3})}\,\Vert \nabla_{v}F\Vert_{L^{p}(m)}+\Vert B_{e}\Vert_{L^{\infty}(\mathbb{T}^{3})}\,\Vert \nabla_{v}F\Vert_{L^{p}(m)}^{p}\\
&\leq \iint \left(\vert \nabla_{v}F\vert^{p}\,(\Psi_{m,p} -M\chi_{R}+3+\frac{1}{2} 
+\frac{M}{R}\Vert \nabla_{v} \,\chi\Vert_{L^{\infty}(\mathbb{R}^{3})}\eta_{1}+\Vert B_{e}\Vert_{L^{\infty}(\mathbb{T}^{3})}\right)\,m^{p}\,dxdv\\
&+\frac{1}{2}\Vert\nabla_{x}F\Vert_{L^{p}(m)}^{p}+\frac{M}{R}C_{\eta_{1}}\Vert \nabla_{v}\, \chi\Vert_{L^{\infty}(\mathbb{R}^{3})}\,\Vert F\Vert_{L^{p}(m)}^{p}.
\end{align*}
Finally, we estimate the last term of the equality \eqref{eg 0}. 
 We treat two cases, and then we use an interpolation argument to complete the proof.\\
\textbf{$\bullet$ Case 1: $p=1$.}\\
We differentiate the equation \eqref{eq 40} with respect to $ x_i $ for all $i=1,2,3$, then we use the equalities of Lemma \ref{lem 3}. We will have the following  equation:
\begin{align}
\partial_{t}\partial_{x_i}F=\mathcal{B}(\partial_{x_i}F)+(v\wedge\partial_{x_i}B_{e})\cdot\nabla_{v} F. 
\label{equation 8}
\end{align}
Using the previous equation, we obtain
\begin{align*}
\frac{d}{dt} \,\Vert \partial_{x_{i}}F\Vert_{L^{1}(m)}& = \iint \,\partial_{t} \,\vert \partial_{x_{i}}F\vert\,m\,dxdv\\
&=\iint (\partial_{x_i}\partial_{t}F)\, \partial_{x_i}F\, \vert \partial_{x_i}F\vert^{-1}\,
m\,dxdv \\
&=\iint \, \mathcal{B}(\partial_{x_i}F)\,\partial_{x_i}F\vert \partial_{x_i}F\vert\,m \, dxdv\\
&+\iint (v\wedge \partial_{x_i}B_e)\cdot \nabla_{v}F\,\partial_{x_i}F\,\vert\partial_{x_i}F\vert^{-1}\, m \,dxdv.
\end{align*}
Using the computations made in Lemma \ref{lem 10} for $p=1$, using Lemma \ref{lem *} in the appendix B, and performing an integration by parts with respect to $v$, we get
\begin{align*}
&\frac{d}{dt} \,\Vert \partial_{x_{i}}F\Vert_{L^{1}(m)}\\
& \leq  \iint \left( \Psi_{m,1} - M\chi_{R}\right) \,  \vert\partial_{x_i}\,F\vert
\,m\,dxdv - \underbrace{\iint (v\wedge \partial_{x_i}B_e)F\,\partial_{x_i}F\,\vert \partial_{x_i}F\vert^{-1}\,\nabla_{v} m\,dxdv}_{=0},
\end{align*}
where, we used the fact that $(v\wedge \partial_{x_i} B_e )\cdot \nabla_{v}m=0$. Then, defining the norm 
$$\Vert \nabla_{x}F\Vert_{L^{p}(m)}:=\sum_{i=1}^{3}\,\Vert \partial_{x_i} F\Vert_{L^{p}(m)}, $$
and using the previous definition, we have
$$\frac{d}{dt} \,\Vert \nabla_x F\Vert_{L^{1}(m)} \leq  \iint \left( \Psi_{m,1} - M\chi_{R}\right) \, \vert \nabla_x \,F\vert \,m\,dxdv . $$
Collecting all the estimates, we obtain
\begin{align*}
&\frac{d}{dt} \Vert F\Vert_{\tilde{W}^{1,1}(m)}\\
&\leq \iint\left( \Psi_{\tilde{m},1} -M\chi_{R}+\frac{M}{R}C_{\eta_{1}}\Vert \nabla_{v} \chi\Vert_{L^{\infty}(\mathbb{R}^{3})} \right)\vert F\vert\,\tilde{m}\,dxdv\\
&+\iint \left( \Psi_{m,1} -M\chi_{R}+3+\frac{1}{2}+\frac{M}{R}\eta_{1}\Vert \nabla_{v}\chi\Vert_{L^{\infty}(\mathbb{R}^{3})}+\Vert B_{e}\Vert_{L^{\infty}(\mathbb{T}^{3})}\right) \vert \nabla_{v}F\vert\,m\,dxdv\\
&+\iint \left( \Psi_{m,1}-M\chi_{R}+\frac{1}{2}\right)\vert \nabla_{x}F\vert\, m\,dxdv. 
\end{align*}
We define then (for $M$ and $R$ to be fixed below).
\begin{align}
\Psi_{m,1}^{1}& :=\Psi_{\tilde{m},1} -M\chi_{R}+\frac{M}{R}C_{\eta_{1}}\Vert \nabla_{v} \chi\Vert_{L^{\infty}(\mathbb{R}^{3})},\label{def 24}\\
\Psi_{m,1}^{2}&:=  \Psi_{m,1} -M\chi_{R}+3+\frac{1}{2}+\frac{M}{R}\eta_{1}\Vert \nabla_{v} \chi\Vert_{L^{\infty}(\mathbb{R}^{3})}+\Vert B_{e}\Vert_{L^{\infty}(\mathbb{T}^{3})},\label{def 25}\\
\Psi_{m,1}^{3}&:= \Psi_{m,1}-M\chi_{R}+\frac{1}{2}.
\label{def 26}
\end{align}
(Recall that $\limsup\limits_{\vert v\vert \to +\infty} \Psi_{m,1}=-k$). We denote then
\begin{align}
a_{m,1}^{1}&=  -k-1 ,\label{def 27}\\
a_{m,1}^{2}& =  -k +\frac{7}{2}+\Vert B_e\Vert_{L^{\infty}(\mathbb{T}^{3})},\label{def 28}\\
a_{m,1}^{3}& =  -k +\frac{1}{2}.
 \label{def 29}
 \end{align}   
We now assume that $k$ satisfies
\begin{align}
\label{hypothese 5}
k>\frac{7}{2}+\Vert B_e \Vert_{L^{\infty}(\mathbb{T}^{3})}.
\end{align}
Hypothesis \eqref{hypothese 5} implies that $a_{m,1}^{i}<0$, for all $i=1,2,3$. Consequently, for  $\eta_{1}$ sufficiently small, we may then find  $M$ and $R>0$ large enough so that, for all $0>a>\max (a_{m,1}^{1},a_{m,1}^{2},a_{m,1}^{3})$, we have 
\begin{align}
\label{conc 1}
\frac{d}{dt}\, \Vert F(t)\Vert_{\tilde{W}^{1,1}(m)} \leq a \Vert F(t)\Vert_{\tilde{W}^{1,1}(m)}.
\end{align}
Hence the operator $\mathcal{B}-a$ is dissipative on $\tilde{W}^{1,1}(m)$ .\\
\textbf{$\bullet$ Case 2: $p=2$. }\\
Again, we differentiate the equation \eqref{eq 40} with respect to $ x$, and we use the equalities of Lemma \ref{lem 3} to obtain the following  equation:
\begin{align}
\partial_{t}\nabla_{x}F=\mathcal{B}(\nabla_{x}F)+(v\wedge\nabla_{x}B_{e})\cdot\nabla_{v} F. 
\label{equation 9}
\end{align}
Using the calculations made in Lemma \ref{lem 10} and the previous equation, we obtain
\begin{align*}
\frac{d}{dt}\frac{1}{2}\Vert \nabla_{x}F\Vert_{L^{2}(m)}^{2} &=-\iint\vert \nabla_{v}\nabla_{x}F\vert^{2}m^{2}\,dxdv\\
&+\iint (\Psi_{m,2} -M\chi_{R})\vert \nabla_{x}F\vert^{2}m^{2}\,dxdv \\
& +\iint (v\wedge\nabla_{x}B_{e})\cdot\nabla_{v}F\,\nabla_{x}F\,m^{2}\,dxdv.
\end{align*} 
Then, by integration by parts with respect to $v$, we get 
\begin{align*}
\frac{d}{dt}\,\frac{1}{2}\Vert \nabla_{x}F\Vert_{L^{2}(m)}^{2} &\leq -\iint \vert \nabla_{v}\nabla_{x}F\vert^{2}m^{2}\,dxdv\\
&+\iint (\Psi_{m,2} -M\chi_{R})\vert \nabla_{x}F\vert^{2}m^{2}\,dxdv \\& +\iint \vert v\wedge \nabla_{x} B_e\vert \vert F\vert \, \vert \nabla_{x}\nabla_{v}F\vert\,m^{2}\,dxdv
\end{align*}
According to the Cauchy-Schwarz inequality, for every $ \varepsilon> 0 $, there is a $C_{\varepsilon}>0$ such that
\begin{align*}
\frac{d}{dt}\frac{1}{2}\Vert \nabla_{x}F\Vert_{L^{2}(m)}^{2} &\leq -\iint\vert \nabla_{v}\nabla_{x}F\vert^{2}m^{2}\,dxdv+\iint (\Psi_{m,2} -M\chi_{R})\vert \nabla_{x}F\vert^{2}m^{2}\,dxdv \\
&+\varepsilon\iint \vert \nabla_{v}\nabla_{x}F\vert^{2}\,m^{2}\,dxdv+C_{\varepsilon}\iint \vert v\wedge B_{e}\vert^{2}\vert F\vert^{2}\,m^{2}\,dxdv.
\end{align*}
We choose $ \varepsilon =\dfrac{1}{4} $, and we finally get
\begin{align*}
\frac{d}{dt}\frac{1}{2}\Vert \nabla_{x}F\Vert_{L^{2}(m)}^{2} &
\leq \iint (\Psi_{m,2} -M\chi_{R})\vert \nabla_{x}F\vert^{2}m^{
2}\,dxdv\\
&+\frac{1}{2}\Vert \nabla_{x}B_{e}\Vert_{L^{\infty}(\mathbb{T}^{3})}\Vert F\Vert_{L^{2}(\tilde{m})}^{2}+\frac{1}{2}\Vert \nabla_{x}F\Vert_{L^{2}(m)}^{2}.
\end{align*}
Collecting all the estimates, we thus obtain
\begin{align*}
&\frac{d}{dt}\frac{1}{2}\Vert F\Vert_{\tilde{W}^{1,2}(m)}^{2}\\
&\leq \iint\left( \Psi_{\tilde{m},2} -M\chi_{R}+\frac{M}{R}C_{\eta_{1}}\Vert \nabla_{v}\, \chi\Vert_{L^{\infty}(\mathbb{R}^{3})}+\frac{1}{2}\Vert \nabla_{x}B_{e}\Vert_{L^{\infty}(\mathbb{T}^{3})}\right)\vert F\vert^{2}\,\tilde{m}^{2}\,dxdv\\
&+\iint \left( \Psi_{m,2} -M\chi_{R}+3+\frac{1}{2}+\frac{M}{R}\eta_{1}\Vert \nabla_{v}\, \chi\Vert_{L^{\infty}(\mathbb{R}^{3})}+\Vert B_{e}\Vert_{L^{\infty}(\mathbb{T}^{3})}\right) \vert \nabla_{v}F\vert^{2}\,m^{2}\,dxdv\\
&+\iint \left( \Psi_{m,2}-M\chi_{R}+\frac{1}{2}+\frac{1}{2}\right)\vert \nabla_{x}F\vert^{2}\, m^{2}\,dxdv
\end{align*}
Again, we define then, for $M$ and $R$ to be fixed in the next paragraph
\begin{align}
\Psi_{m,2}^{1}& =\Psi_{\tilde{m},2} -M\chi_{R}+\frac{M}{R}C_{\eta_{1}}\Vert \nabla_{v}\, \chi\Vert_{L^{\infty}(\mathbb{R}^{3})}+\frac{1}{2}\Vert \nabla_{x}B_{e}\Vert_{L^{\infty}(\mathbb{T}^{3})},\label{def 18}\\
\Psi_{m,2}^{2}&=  \Psi_{m,2} -M\chi_{R}+3+\frac{1}{2}+\frac{M}{R}\eta_{1}\Vert \nabla_{v}\, \chi\Vert_{L^{\infty}(\mathbb{R}^{3})}+\Vert B_{e}\Vert_{L^{\infty}(\mathbb{T}^{3})},\label{def 19}\\
\Psi_{m,2}^{3}&= \Psi_{m,2}-M\chi_{R}+1.
\label{def 20}
\end{align}
Again, we denote  
\begin{align}
a_{m,2}^{1}& = \frac{3}{2} -k-1 +\frac{1}{2}\Vert \nabla_{x}B_{e}\Vert_{L^{\infty}(\mathbb{T}^{3})}\label{def 21}\\
a_{m,2}^{2}&= \frac{3}{2} -k +\frac{7}{2}+\Vert B_e\Vert_{L^{\infty}(\mathbb{T}^{3})}\label{def 22}\\
a_{m,2}^{3}& = \frac{3}{2} -k +1,
 \label{def 23}
 \end{align}  
Assuming $ k $ satisfies
\begin{align}
k>5+ \max\left(\Vert B_e\Vert_{L^{\infty}(\mathbb{T}^{3})},\frac{1}{2}\Vert \nabla_{x}B_e\Vert_{L^{\infty}(\mathbb{T}^{3})}\right),
\label{hypothese 6}
\end{align}
 we obtain that $a_{m,2}^{i}<0$ for all $ i=1,2,3$. Consequently, we may find $M$ and $R>0$ large enough so that for all $0>a>\max (a_{m,2}^{1},a_{m,2}^{2},a_{m,2}^{3})$
$$ \frac{d}{dt}\frac{1}{2}\Vert F(t)\Vert_{\tilde{W}^{1,2}(m)}^{p}\leq  a\, \Vert F(t)\Vert_{\tilde{W}^{1,2}(m)}^{2}. $$
 Hence the operator $\mathcal{B}-a$  is dissipative on $\tilde{W}^{1,2}(m)$ for such $M$ and $R$. 
 ~\par \textbf{ For the general case  $1\leq p\leq 2 $:}
The cases $ 1 $ and $ 2 $ show us that the operator $ S_{\mathcal{B}} (t) $ is continuous on $ \tilde{W}^{1,1} (m) $ (on $ \tilde{W}^{1,2} (m) $) with the operator $ \mathcal{B} $ is given by
 $$\mathcal{B}=\mathcal{L}_{0}-M\,\chi_{R}, $$
where $M$ and $R>0$ agree with the conditions given in case $1$ and case $2$.
Applying the Riesz-Thorin interpolation Theorem and using Hypothesis \ref{hyp 2},  we obtain that the operator $ S_{\mathcal{B}} (t) $ is continuous on $ \tilde{W}^{1, p} (m) $ for all $ 1 \leq p \leq $ 2, with the following dissipative estimate:
 $$ \forall 0>a>\max (a_{m,1}^{i}, a^{i}_{m,2},\, i=1,2,3), \quad \Vert S_{\mathcal{B}}(t) \,F_0 \Vert_{\tilde{W}_{1,p}(m)}\leq Ce^{at}\, \Vert F_0\Vert_{\tilde{W}^{1,p}(m)}. $$
This completes the proof.
\end{proof}
\end{lem}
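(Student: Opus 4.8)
The plan is to differentiate the $p$-th power of the $\tilde W^{1,p}(m)$-norm of the solution $F$ of $\partial_t F=\mathcal B F$ along the flow, and to bound separately each of the three contributions making up $\Vert F\Vert^p_{L^p(\tilde m)}+\Vert\nabla_v F\Vert^p_{L^p(m)}+\Vert\nabla_x F\Vert^p_{L^p(m)}$, with $\tilde m=m\langle v\rangle$, following the scheme of Lemma~\ref{lem 10}. The zeroth-order term is treated directly by Lemma~\ref{lem 10} applied with the heavier weight $\tilde m$, producing the effective potential $\Psi_{\tilde m,p}-M\chi_R$ acting on $\vert F\vert$. For the two derivative terms I would first commute $\nabla_v$ and $\nabla_x$ past $\mathcal B$ by means of Lemma~\ref{lem 3}: differentiating the equation in $v$ yields, besides $\mathcal B(\nabla_v F)$, the lower-order terms $3\,\nabla_v F$ (from item (2)), $(B_e\wedge\nabla_v)F$ (from item (3)), $-\nabla_x F$ (from item (1)) and a cutoff term $M(\nabla_v\chi_R)F$; differentiating in $x$ yields the crucial magnetic commutator $(v\wedge\nabla_x B_e)\cdot\nabla_v F$ (from item (4)). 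Each of these errors is then absorbed by Young's inequality, at the price of explicit constants depending on $\Vert B_e\Vert_{L^\infty}$, $\Vert\nabla_x B_e\Vert_{L^\infty}$ and on free parameters $\eta_1,\varepsilon>0$.

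The decisive difficulty is the $\nabla_x$ contribution, because the commutator $(v\wedge\nabla_x B_e)\cdot\nabla_v F$ grows linearly in $v$ and cannot be controlled by $\Vert\nabla_x F\Vert_{L^p(m)}$ alone; this is precisely what forces the extra factor $\langle v\rangle$ in the $L^p(\tilde m)$-part of the norm. I would handle the two endpoints by distinct arguments. For $p=1$ the offending term is \emph{killed} by an integration by parts in $v$: since $m=\langle v\rangle^k$ depends only on $\vert v\vert$, one has $(v\wedge\partial_{x_i}B_e)\cdot\nabla_v m=0$, so by Lemma~\ref{lem *} the boundary-free integration removes it entirely, leaving only $\Psi_{m,1}-M\chi_R$ acting on $\vert\nabla_x F\vert$. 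For $p=2$ I would instead retain the hypodissipative gain $-\Vert\nabla_v\nabla_x F\Vert^2_{L^2(m)}$ from the diffusion and bound the magnetic term via Cauchy--Schwarz by $\varepsilon\Vert\nabla_v\nabla_x F\Vert^2_{L^2(m)}+C_\varepsilon\Vert\nabla_x B_e\Vert_{L^\infty}^2\Vert F\Vert^2_{L^2(\tilde m)}$; choosing $\varepsilon=\tfrac14$ absorbs the first piece into the gain and charges the second to the already-dissipating $L^2(\tilde m)$-term.

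Collecting all estimates, I would define the effective potentials $\Psi^i_{m,1}$ and $\Psi^i_{m,2}$ entering the three blocks ($i=1,2,3$) and read off their limits at infinity $a^i_{m,1}$ and $a^i_{m,2}$ using $\limsup_{\vert v\vert\to\infty}\Psi_{m,p}=-k$. Hypothesis~\ref{hyp 2} on $k$ is exactly the condition guaranteeing $a^i_{m,1}<0$ and $a^i_{m,2}<0$ for all $i$; hence, fixing $\eta_1$ small and then $M,R$ large enough, the cutoff $M\chi_R$ pushes every potential below any prescribed $a$ with $0>a>\max_i(a^i_{m,1},a^i_{m,2})$ uniformly in $v$, giving $\tfrac{d}{dt}\Vert F\Vert^p_{\tilde W^{1,p}(m)}\le a\,p\,\Vert F\Vert^p_{\tilde W^{1,p}(m)}$ at the endpoints $p=1,2$ and, after integration, the stated semigroup bound there. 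Finally, for the general case $1\le p\le 2$ I would interpolate by the Riesz--Thorin theorem between the continuity of $S_{\mathcal B}(t)$ on $\tilde W^{1,1}(m)$ and on $\tilde W^{1,2}(m)$, where $\mathcal B=\mathcal L_0-M\chi_R$ with $M,R$ chosen compatible with both endpoint conditions, yielding the claimed dissipative estimate for all $p\in[1,2]$. The main obstacle throughout is this magnetic cross term in the $x$-derivative equation; once it is dealt with (by the vanishing integration by parts for $p=1$ and by Cauchy--Schwarz against the diffusive gain for $p=2$), the remainder is a careful bookkeeping of the Young and Cauchy--Schwarz constants so that they match the thresholds recorded in $a^i_{m,1}$ and $a^i_{m,2}$.
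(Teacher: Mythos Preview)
Your proposal is correct and follows essentially the same approach as the paper's proof: the same decomposition of the $\tilde W^{1,p}(m)$-norm, the same commutator identities from Lemma~\ref{lem 3}, the same endpoint strategy for the magnetic cross term in the $\nabla_x$-equation (the vanishing $(v\wedge\partial_{x_i}B_e)\cdot\nabla_v m=0$ for $p=1$, Cauchy--Schwarz against the diffusive gain with $\varepsilon=\tfrac14$ for $p=2$), and Riesz--Thorin interpolation for intermediate $p$. The only minor remark is that in the $p=1$ case Lemma~\ref{lem *} is invoked in the paper to justify the non-positivity of the Laplacian term rather than to remove the magnetic term; the latter vanishes purely by the radial-weight identity you already stated.
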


From now on, $M$ and $R$ are fixed.
\begin{lem}[Property of regularization]\label{lem 13}
There exist $b$ and $C > 0$ such that, for all $p,q$ with $ 1\leq p\leq q \leq 2,$  we have
\begin{align}
\forall t\geq 0,\quad \Vert S_{\mathcal{B}}(t) F_{0}\Vert_{\tilde{W}^{1,q}(m_{0})} \leq Ce^{bt}t^{-(3d+4)(\frac{1}{p}-\frac{1}{q})}\, \Vert F_{0}\Vert_{\tilde{W}^{1,p}(m_{0})}.
\label{eg 47}
\end{align}
\begin{align}
\forall t\geq 0,\quad \Vert S_{\mathcal{B}^{*}}(t) F_{0}\Vert_{\tilde{W}^{-1,p^{'}}(m_{0})} \leq Ce^{bt}t^{-(3d+4)(\frac{1}{p}-\frac{1}{q})}\, \Vert F_{0}\Vert_{\tilde{W}^{-1,q^{'}}(m_{0})}.
 \label{eg 48}
\end{align}
Here  $2\leq q^{'}\leq p^{'}\leq +\infty $ are the conjugates of $p$ and $q$ respectively.
\begin{proof}
Let $ F $ be the solution of the  evolution equation
$$ \partial_{t}F=\mathcal{B}F,\quad F_{|_{t=0}}=F_{0}.$$
In to the proof of Lemma \ref{lem 11}, the following relative entropy has been introduced
\begin{align*}
\mathcal{H}(t,h) = B\Vert h\Vert_{L^{1}(m_{0})}^{2}+t^{r}\mathcal{G}(t,h),
\end{align*}
with $ \mathcal{G} $ defined by
$$\mathcal{G}(t,h)=C \Vert h \Vert^{2}_{L^{2}(m_{0})} + D\, t \Vert \nabla_{v} h \Vert^{2}_{L^{2}(m_{0})}  + E \, t^{2} \langle \nabla_{x}h, \nabla_{v}h \rangle + a\,t^{3}\, \Vert \nabla_{x} h\Vert^{2}_{L^{2}(m_{0})}. $$
We have shown, for constants $ \alpha, D, E $ and $ \beta > 0 $ well chosen, that there exist $ C> 0 $ and $ r = 3d + 1 $ such that
$$\forall t\geq 0, \quad\mathcal{H}(t,F)\leq  B'''\,\mathcal{H}(0,F_{0})\leq C\Vert F_0\Vert^{2}_{L^1 (m_0)}.  $$
Using the previous estimate and the definition of $ \mathcal{H} $ and $ \mathcal{G} $, we get
\begin{align*}
\Vert S_{\mathcal{B}}(t)F_{0}\Vert_{L^{2}(m_{0})}^{2}&\leq \frac{\alpha}{t^{3d+1}}\, \mathcal{H}(t, F) \leq \frac{C'}{t^{3d+1}}\,e^{bt}\Vert F_{0}\Vert_{L^{1}(m_{0})}^{2},\\
\Vert \nabla_{v} S_{\mathcal{B}}(t)F_{0}\Vert_{L^{2}(m_{0})}^{2}&\leq \frac{D}{t^{3d+2}}\, \mathcal{H}(t, F)\leq \frac{C''}{t^{3d+2}}\,e^{bt}\Vert F_{0}\Vert_{L^{1}(m_{0})}^{2},\\
\Vert \nabla_{x}S_{\mathcal{B}}(t)F_{0}\Vert_{L^{2}(m_{0})}^{2}&\leq \frac{\beta}{t^{3d+4}}\, \mathcal{H}(t, F)\leq \frac{C'''}{t^{3d+4}}\,e^{bt}\Vert F_{0}\Vert_{L^{1}(m_{0})}^{2}.
\end{align*}
Therefore,
$$ \forall t\in [0,+\infty ),\quad\Vert S_{\mathcal{B}}(t)F_{0}\Vert_{H^{1}(m_{0})}^{2} \leq \frac{\tilde{C}}{t^{3d+4}}e^{bt}\,\Vert F_{0}\Vert_{W^{1,1}(m_{0})}^{2}.$$
Finally, to complete the proof, we use the Riesz-Thorin Interpolation Theorem in the real case on the operator $ S_{\mathcal{B}}(t) $ . We obtain the continuity of $ S_{\mathcal {B}} (t)$ from $ \tilde {W}^{1, p} (m_{0}) $ to $ \tilde{W}^{1, q} (m_{0}) $, with $ S_{\mathcal{B}} $ satisfying the estimate \eqref{eg 47}.
~\par The estimate \eqref{eg 48} follows from \eqref{eg 47} by duality.
\end{proof}
\end{lem}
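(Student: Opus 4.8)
The plan is to deduce the full two-parameter smoothing family \eqref{eg 47}--\eqref{eg 48} from a single endpoint estimate, namely the gain of integrability from $W^{1,1}(m_0)$ to $H^1(m_0)$, and then to propagate it by interpolation and transpose it by duality, exactly mirroring the architecture of the proof of Lemma \ref{lem 11}. The structural observation I would exploit is that the functional
\[
\mathcal{H}(t,h)=B\Vert h\Vert_{L^1(m_0)}^2+t^r\,\mathcal{G}(t,h),\qquad r=3d+1,
\]
already constructed in the proof of Lemma \ref{lem 11}, controls not only the $L^2(m_0)$ mass of $h$ but also its first derivatives, each with its own power of $t$. At this stage I would not redo any hypocoercive computation: the decay $\mathcal{H}(t,F)\le C\Vert F_0\Vert_{L^1(m_0)}^2$ is already proven there, and the entire new content is to read off the three components of $\mathcal{G}$ separately.

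Concretely, writing $F(t)=S_{\mathcal{B}}(t)F_0$ and using the coercivity of $\mathcal{G}$ (guaranteed by $B>\alpha>D,\beta$ and $E<\sqrt{\beta D}$), I would extract from $\mathcal{H}(t,F)\le C\Vert F_0\Vert_{L^1(m_0)}^2$ the bounds
\[
\Vert F(t)\Vert_{L^2(m_0)}^2\le \tfrac{C}{t^{3d+1}}\Vert F_0\Vert_{L^1(m_0)}^2,\quad \Vert \nabla_v F(t)\Vert_{L^2(m_0)}^2\le \tfrac{C}{t^{3d+2}}\Vert F_0\Vert_{L^1(m_0)}^2,\quad \Vert \nabla_x F(t)\Vert_{L^2(m_0)}^2\le \tfrac{C}{t^{3d+4}}\Vert F_0\Vert_{L^1(m_0)}^2,
\]
the last coming from the $\beta t^{3}\Vert\nabla_x h\Vert_{L^2(m_0)}^2$ term of $\mathcal{G}$, whose total weight $t^{r+3}=t^{3d+4}$ is precisely what produces the exponent $3d+4$ in the statement (as opposed to the $3d+1$ of Lemma \ref{lem 11}). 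Keeping the worst power and using the global-in-time control of Remark \ref{remarque 2} yields the endpoint
\[
\Vert S_{\mathcal{B}}(t)F_0\Vert_{H^1(m_0)}^2\le \tfrac{C}{t^{3d+4}}\,e^{bt}\,\Vert F_0\Vert_{W^{1,1}(m_0)}^2,
\]
since $\Vert F_0\Vert_{L^1(m_0)}\le\Vert F_0\Vert_{W^{1,1}(m_0)}$. One point to verify is that the zeroth-order part $\Vert F\Vert_{L^2(m_0\langle v\rangle)}$ in the $\tilde{W}$-norm is indeed dominated: this follows from the velocity-confinement inequality $\Vert vF\Vert_{L^2(m_0)}^2\le 16\,\Vert\nabla_v F\Vert_{L^2(m_0)}^2$ established in Lemma \ref{lem 11}, so that $\tilde{W}^{1,q}(m_0)$ and $W^{1,q}(m_0)$ are equivalent for the exponential weight $m_0$.

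With the endpoint in hand I would interpolate. The semigroup $S_{\mathcal{B}}(t)$ is bounded on $W^{1,1}(m_0)$ and on $H^1(m_0)$ with operator norm $\le Ce^{bt}$, which follows from the $p=1$ and $p=2$ dissipativity computations of Lemma \ref{lem 12} applied to the exponential weight $m_0$ (whose relevant suprema $\sup\Psi_{m_0,q}$ are finite by Remark \ref{remarque 2}). Applying the Riesz--Thorin interpolation theorem componentwise on the $\tilde{W}$-scale between these trivial bounds and the endpoint smoothing bound, exactly as in the log-convexity argument closing Lemma \ref{lem 11}, then gives, for every $1\le p\le q\le 2$,
\[
\Vert S_{\mathcal{B}}(t)F_0\Vert_{\tilde{W}^{1,q}(m_0)}\le Ce^{bt}\,t^{-(3d+4)\left(\frac1p-\frac1q\right)}\,\Vert F_0\Vert_{\tilde{W}^{1,p}(m_0)},
\]
which is \eqref{eg 47}. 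Finally, \eqref{eg 48} follows by transposition: since $\langle S_{\mathcal{B}^*}(t)g,h\rangle=\langle g,S_{\mathcal{B}}(t)h\rangle$ and $\tilde{W}^{-1,p'}(m_0)$ is, after the $m_0$-conjugation already used in Lemma \ref{lem 11}, the dual of $\tilde{W}^{1,p}(m_0)$, the bound \eqref{eg 47} dualizes to the claimed estimate for $S_{\mathcal{B}^*}(t)$ with conjugate exponents $2\le q'\le p'\le+\infty$.

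I expect the main obstacle to be essentially bookkeeping rather than conceptual, and of two kinds. First, one must be scrupulous that the single functional $\mathcal{H}$ genuinely majorizes the $\nabla_x$ contribution with the power $t^{3}$, since it is this term alone that upgrades the smoothing exponent from $3d+1$ to $3d+4$; this rests entirely on the coercivity constants fixed in Lemma \ref{lem 11}, and in particular on the fact that the cross term $E\,t^2\langle\nabla_x h,\nabla_v h\rangle$ is absorbed by $E<\sqrt{\beta D}$ without spoiling the $t^{3}$ scaling of the $\nabla_x$ term. Second, the interpolation and duality steps must be carried out on the \emph{weighted} Sobolev scale $\tilde{W}^{1,p}(m_0)$ rather than on plain $L^p$: this requires identifying the negative-order weighted dual spaces $\tilde{W}^{-1,p'}(m_0)$ correctly and checking that Riesz--Thorin may be applied to the components $F,\nabla_v F,\nabla_x F$ simultaneously. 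Once the weight is absorbed through the $m_0$-conjugation as in Lemma \ref{lem 11}, both steps reduce to classical statements.
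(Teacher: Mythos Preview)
Your proposal is correct and follows essentially the same route as the paper's proof: read off the three component bounds from the functional $\mathcal{H}$ of Lemma \ref{lem 11} (with powers $t^{-(3d+1)}$, $t^{-(3d+2)}$, $t^{-(3d+4)}$), take the worst one to get the $W^{1,1}(m_0)\to H^1(m_0)$ endpoint, interpolate via Riesz--Thorin, and dualize for \eqref{eg 48}. You are in fact more careful than the paper on two points it glosses over: the control of the $\langle v\rangle$-weighted zeroth-order term in the $\tilde{W}$-norm via the confinement inequality from Lemma \ref{lem 11}, and the need for $p=1,2$ boundedness of $S_{\mathcal{B}}(t)$ on the $W^{1,p}(m_0)$ scale as interpolation endpoints.
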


\begin{cor}\label{cor 2}
Let  $m$ be a weight that satisfies Hypothesis \ref{hyp 2}. Then there exists $\Theta \geq 0$ such that for all $F_{0}\in \tilde{W}^{1,p}(m)$ with  $p\in [1,2]$ 
\begin{align*}
\forall t\geq 0, \quad \Vert \mathcal{A}S_{\mathcal{B}}(t) F_0\Vert_{H^{1}(m_{0})} &\leq Ce^{bt}\, t^{-\Theta}\, \Vert F_0\Vert_{\tilde{W}^{1,p}(m)},\\
 \forall t\geq 0,\quad\Vert S_{\mathcal{B}} \mathcal{A}(t)F_0\Vert_{H^{1}(m_{0})}& \leq Ce^{bt}\, t^{-\Theta}\, \Vert F_0\Vert_{\tilde{W}^{1,p}(m)} .
\end{align*}
\begin{proof}
The proof is similar to that of Corollary \ref{cor 1}.
\end{proof}
\end{cor}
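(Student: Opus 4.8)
The plan is to reproduce the proof of Corollary \ref{cor 1} line by line, replacing everywhere the $L^p$ regularization estimates of Lemma \ref{lem 11} by the weighted-Sobolev regularization estimates of Lemma \ref{lem 13}, the target space $L^2(m_0)$ by $H^1(m_0)$, and the source space $L^p(m)$ by $\tilde{W}^{1,p}(m)$. Two structural facts make the transfer go through. First, since $m_0=\mu^{-1/2}$ dominates every polynomial weight (the ratio $m/m_0=\langle v\rangle^k e^{-|v|^2/4}(2\pi)^{-3/4}$ is continuous and tends to $0$ at infinity, hence bounded), and since $m_0\leq m_0\langle v\rangle$ pointwise, one has $\Vert\cdot\Vert_{H^1(m_0)}\leq\Vert\cdot\Vert_{\tilde{W}^{1,2}(m_0)}$; thus any bound obtained in the $\tilde{W}^{1,2}(m_0)$-norm supplied by Lemma \ref{lem 13} immediately yields the desired $H^1(m_0)$-bound. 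Second — the only genuinely new point compared with the $L^p$ case — the multiplication operator $\mathcal{A}=M\chi_R$ is bounded from $\tilde{W}^{1,p}(m)$ into $\tilde{W}^{1,p}(m_0)$: writing $\nabla_v(\chi_R F)=(\nabla_v\chi_R)F+\chi_R\nabla_v F$ and $\nabla_x(\chi_R F)=\chi_R\nabla_x F$, the commutator term $(\nabla_v\chi_R)F$ is of order zero and is absorbed by the $L^p(\tilde{m})$-component of the $\tilde{W}^{1,p}(m)$-norm (recall $\tilde{m}=m\langle v\rangle$), while on $\operatorname{Supp}\chi_R\subset B(0,2R)$ the ratios $m_0/m$ and $\langle v\rangle\,m_0/\tilde{m}$ are bounded; hence $\Vert\mathcal{A}F\Vert_{\tilde{W}^{1,p}(m_0)}\leq C_{M,R}\Vert F\Vert_{\tilde{W}^{1,p}(m)}$.

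For the second inequality I would argue directly. Applying estimate \eqref{eg 47} of Lemma \ref{lem 13} with $q=2$ to $\mathcal{A}F_0$ and then invoking the two facts above,
\begin{align*}
\Vert S_{\mathcal{B}}(t)\mathcal{A}F_0\Vert_{H^1(m_0)}
&\leq \Vert S_{\mathcal{B}}(t)\mathcal{A}F_0\Vert_{\tilde{W}^{1,2}(m_0)}\\
&\leq C\,e^{bt}\,t^{-(3d+4)(\frac{1}{p}-\frac{1}{2})}\,\Vert\mathcal{A}F_0\Vert_{\tilde{W}^{1,p}(m_0)}\\
&\leq C'\,e^{bt}\,t^{-\Theta}\,\Vert F_0\Vert_{\tilde{W}^{1,p}(m)},
\end{align*}
with $\Theta=(3d+4)(1/p-1/2)\geq 0$, which is exactly the claimed bound.

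For the first inequality I would proceed by duality, following Steps 1--2 of Corollary \ref{cor 1} but now in the weighted-Sobolev scale. Starting from estimate \eqref{eg 48} of Lemma \ref{lem 13} with $q'=2$, and using the continuous injection of weighted negative-order spaces $\tilde{W}^{-1,p'}(m_0)\hookrightarrow\tilde{W}^{-1,p'}(m)$, one first gets $\Vert S_{\mathcal{B}^*}(t)g\Vert_{\tilde{W}^{-1,p'}(m)}\leq C\,e^{bt}\,t^{-\Theta}\,\Vert g\Vert_{\tilde{W}^{-1,2}(m_0)}$. Taking $g=\mathcal{A}F_0$, conjugating $S_{\mathcal{B}^*}$ by the weight $m$, using $\mathcal{A}^*=\mathcal{A}$ (multiplication is self-adjoint for the $L^2(\mathbb{T}^3\times\mathbb{R}^3)$ pivot) together with the boundedness of $\mathcal{A}$ noted above, and dualizing back (the dual of $\tilde{W}^{-1,p'}$ being $\tilde{W}^{1,p}$), one obtains $\Vert\mathcal{A}S_{\mathcal{B}}(t)F_0\Vert_{\tilde{W}^{1,2}(m)}\leq C\,e^{bt}\,t^{-\Theta}\,\Vert F_0\Vert_{\tilde{W}^{1,p}(m)}$. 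Finally, since $\mathcal{A}S_{\mathcal{B}}(t)F_0=M\chi_R\,S_{\mathcal{B}}(t)F_0$ is supported in $B(0,2R)$, on which $m_0\leq C\,m$, the norms $H^1(m_0)$, $H^1(m)$ and $\tilde{W}^{1,2}(m)$ are mutually equivalent, which lets me downgrade the last estimate to the desired $\Vert\mathcal{A}S_{\mathcal{B}}(t)F_0\Vert_{H^1(m_0)}\leq C\,e^{bt}\,t^{-\Theta}\,\Vert F_0\Vert_{\tilde{W}^{1,p}(m)}$.

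I expect the main obstacle to be the bookkeeping in this duality step: one must identify the weighted negative-order spaces $\tilde{W}^{-1,p'}(m)$ as the duals of $\tilde{W}^{1,p}(m)$ with the correct placement of the weight $m$, and check that conjugation by $m$ turns $S_{\mathcal{B}^*}$ into the genuine adjoint (for the unweighted $L^2$ pairing) of $m^{-1}S_{\mathcal{B}}m$, so that the duality produces precisely $\mathcal{A}S_{\mathcal{B}}(t)$ and not some other composition. The compact support of $\mathcal{A}$ is what tames every weight mismatch — both when bounding $\mathcal{A}$ on the Sobolev scale and when passing from $m$ to $m_0$ at the end — so once the duality is set up correctly the remaining estimates are entirely routine.
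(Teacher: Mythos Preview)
Your proposal is correct and follows exactly the route the paper intends: the paper's own proof is just the one line ``similar to Corollary \ref{cor 1}'', and what you have written is precisely that transfer, with Lemma \ref{lem 13} replacing Lemma \ref{lem 11}, $H^1(m_0)$ replacing $L^2(m_0)$, and the compact support of $\mathcal{A}$ handling both the weight change $m\to m_0$ and the commutator $[\nabla_v,\chi_R]$. Your remark that the only delicate point is the duality bookkeeping in the weighted negative-order Sobolev scale is well placed, and the use of the compact support of $\mathcal{A}$ to make the $H^1(m_0)$, $H^1(m)$ and $\tilde W^{1,2}(m)$ norms equivalent at the end is exactly the analogue of the final step of Corollary \ref{cor 1}.
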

\begin{proof}[Proof of Theorem \ref{thm 6}]
The estimate \eqref{eg 8} is an immediate consequence of Theorem \ref{thm 9} together with Theorem \ref{thm 4}, Lemma \ref{lem 12}, Lemma \ref{lem 13} and Corollary \ref{cor 2}.
\end{proof}
\appendix

\section{Property of the operator $ (v \wedge B_e) \cdot \nabla_v $}
In the following Lemma we show that operator $ (v \wedge B_e) \cdot \nabla_v $ and $v\cdot\nabla_{x}$ are  formally skew-adjoint operators in the space $ L^{2} (dxd \mu) $.
\begin{lem}\label{lem 14}
Let $B_{e}$ be the external magnetic field, then, with adjoints in the space $L^{2}(dxd\mu)$,
$$ ( (v\wedge B_{e})\cdot \nabla_{v} )^{*} = - (v\wedge B_{e})\cdot \nabla_{v}, $$
and 
$$\left(v\cdot\nabla_{x}\right)^{*}=-v\cdot\nabla_{x}. $$
 \begin{proof}
 Let $f$ and $g$ $\in C^{\infty}_{0}(\mathbb{R}^{3} \times \mathbb{T}^{3})$. We have 
 \begin{align*}
 \langle ((v\wedge B_{e})\cdot \nabla_{v} )^{*} f , g \rangle & = \langle f, ((v\wedge B_{e})\cdot \nabla_{v})g \rangle \\
 & = \iint\, f  \,((v\wedge B_{e})\cdot \nabla_{v})g dx d\mu.
 \end{align*}
  Using the fact $$(v \wedge B_{e})\cdot \nabla_{v}f = \nabla_{v}\cdot (v \wedge B_{e})f,$$
we obtain 
 \begin{align*}
 \langle ((v\wedge B_{e})\cdot \nabla_{v} )^{*} f , g \rangle  & = \iint f \,  \nabla_{v}\cdot(v\wedge B_{e})g \, dx d\mu \\
 &= - \iint f (-\nabla_{v} + v )\cdot(v\wedge B_{e})\,g \, dx d\mu,
 \end{align*}
since $(v \wedge B_{e})\cdot v = 0 $. By integration by parts,  we have then
\begin{align*}
\langle ((v\wedge B_{e})\cdot \nabla_{v} )^{*} f , g \rangle & = - \iint \nabla_{v}f \cdot (v \wedge B_{e})g \, dx d\mu \\
&= - \iint g \,(v\wedge B_{e})\cdot \nabla_{v}f \, dx d\mu \\
&= -\langle ((v\wedge B_{e} )\cdot \nabla_{v}) f, g \rangle.
\end{align*}
For the second equality, we obtain
\begin{align*}
\langle \left( v\cdot\nabla_{x}\right)^{*} f, g\rangle = \langle f,  v\cdot\nabla_{x}g\rangle
=\iint f\,\left(v\cdot\nabla_{x}g\right)\,dxd\mu,
\end{align*}
by integration by parts with respect to $x$. Since $\mu$ is independent of $x$, we have then 
\begin{align*}
\langle \left( v\cdot\nabla_{x}\right)^{*} f, g\rangle=-\iint \left(v\cdot\nabla_{x}f\right)\,g\,dxd\mu 
=-\langle v\cdot\nabla_{x}f, g\rangle.
\end{align*}
This completes the proof.
 \end{proof}
   \end{lem}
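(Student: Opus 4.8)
The plan is to verify the two skew-adjointness identities separately, taking care that the inner product on $L^2(dxd\mu)$ is weighted by the Gaussian $\mu$. Throughout I would take test functions $f, g \in C^{\infty}_{0}(\mathbb{T}^{3} \times \mathbb{R}^{3})$, so that every integration by parts is free of boundary terms: the torus $\mathbb{T}^3$ has no boundary, and compact support eliminates contributions at infinity in $v$. The two operators are then handled by integration by parts in $x$ and in $v$ respectively.

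For the transport operator $v \cdot \nabla_x$ the computation is immediate. Since the weight $\mu = \mu(v)$ does not depend on $x$, integrating by parts in $x$ produces no weight-derivative term, so
$$\langle (v \cdot \nabla_x)^* f, g \rangle = \iint f\,(v \cdot \nabla_x g)\, dx\, d\mu = -\iint (v \cdot \nabla_x f)\, g\, dx\, d\mu = -\langle v \cdot \nabla_x f, g \rangle.$$

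For the magnetic operator $(v \wedge B_e) \cdot \nabla_v$ I would exploit two algebraic facts. First, since $B_e$ is independent of $v$, one has $\nabla_v \cdot (v \wedge B_e) = 0$, so that $(v \wedge B_e) \cdot \nabla_v g = \nabla_v \cdot ((v \wedge B_e)\, g)$. Second, $v \cdot (v \wedge B_e) = 0$ by perpendicularity of the cross product. Writing the inner product explicitly and integrating by parts in $v$ against $d\mu = \mu\, dv$, and using $\nabla_v(f\mu) = (\nabla_v f - v f)\,\mu$, I obtain
$$\langle f, (v\wedge B_e)\cdot\nabla_v g\rangle = -\iint (\nabla_v f - v f)\cdot (v\wedge B_e)\, g\, dx\, d\mu.$$
The term containing $v f$ is proportional to $v \cdot (v \wedge B_e)$ and hence vanishes, leaving
$$\langle f, (v\wedge B_e)\cdot\nabla_v g\rangle = -\iint g\,(v\wedge B_e)\cdot\nabla_v f\, dx\, d\mu = -\langle (v\wedge B_e)\cdot\nabla_v f, g\rangle,$$
which is the asserted skew-adjointness.

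The only point requiring care, and what I regard as the crux, is the bookkeeping of the Gaussian weight during the $v$-integration by parts: the formal adjoint of $\partial_{v_i}$ in the weighted space is $-\partial_{v_i} + v_i$ rather than $-\partial_{v_i}$, so differentiating $\mu$ generates an extra term linear in $v$. The whole argument hinges on that extra term cancelling through the geometric identity $v \cdot (v \wedge B_e) = 0$; no genuine analytic difficulty arises beyond this algebraic cancellation.
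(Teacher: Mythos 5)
Your proposal is correct and follows essentially the same route as the paper's own proof: both rewrite $(v\wedge B_{e})\cdot\nabla_{v}g$ in divergence form using $\nabla_{v}\cdot(v\wedge B_{e})=0$, integrate by parts against the Gaussian weight so that the formal adjoint of $\nabla_{v}$ becomes $-\nabla_{v}+v$, and cancel the extra term via $v\cdot(v\wedge B_{e})=0$, while the transport part is handled by an $x$-integration by parts using that $\mu$ is independent of $x$. Your closing remark correctly identifies the same crux the paper relies on, so there is nothing to add.
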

 \begin{remarque}\label{remarque A.2}
 We can generalize the results of the preceding Lemma for all function $m$ which are radial in $v$ and independant of $x$. We obtain that $v\cdot\nabla_{x}$ and ($v\wedge B_e)\cdot\nabla_{v}$ are formally skew-adjoint operators in the space $L^2(m)$.
 \end{remarque}

 \section{Non positivity of a certain integral} The following well-know lemma is used in the proof of the dissipativity of the operator $\mathcal{B}-a$ in the spaces $L^p (m)$ and $\tilde{W}^{1,p}(m)$ in Section \ref{section 4}. This lemma is a special case of the general study done in the article \cite{chafai2004entropies}.
 \begin{lem}\label{lem *}
 Let $g$ be a smooth function and let $p\geq 1$. Then the following integral is well-posed and satisfy the following estimate
 \begin{align*}
 \iint_{\mathbb{T}^{3}\times\mathbb{R}^{3}}\, \left(\Delta_{v}g \right)\vert g\vert^{p-2}\,g\,dxdv \leq 0.
 \end{align*}
 \begin{proof}
Formal integration by parts with respect to $v$ justifies the property for all $p>1$.   For $p=1$, we regularize and use  convexity of the function $\Psi: s\to \vert s\vert$.
 \end{proof}
 \end{lem}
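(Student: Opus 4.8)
The plan is to read the estimate as a manifestation of the convexity of $s\mapsto|s|^{p}$, splitting into the regular range $p>1$ and the endpoint $p=1$, since the integrand $|g|^{p-2}g$ is continuously differentiable in $g$ only in the former case. First I would record that $|g|^{p-2}g=|g|^{p-1}\operatorname{sgn}(g)$, so that $\bigl||g|^{p-2}g\bigr|=|g|^{p-1}$ is continuous and vanishes where $g$ does, for every $p\geq1$; together with the smoothness and velocity decay of $g$ this makes $(\Delta_{v}g)\,|g|^{p-2}g$ integrable on $\mathbb{T}^{3}\times\mathbb{R}^{3}$, so the integral is well-posed.

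For $p>1$ the map $s\mapsto|s|^{p-2}s$ is $C^{1}$ with nonnegative derivative $(p-1)|s|^{p-2}$, hence $\nabla_{v}\bigl(|g|^{p-2}g\bigr)=(p-1)|g|^{p-2}\nabla_{v}g$. An integration by parts in $v$ (the boundary contributions vanishing by decay) then gives
\[
\iint (\Delta_{v}g)\,|g|^{p-2}g\,dx\,dv = -(p-1)\iint |g|^{p-2}\,|\nabla_{v}g|^{2}\,dx\,dv \leq 0,
\]
which already proves the lemma for all $p>1$.

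For $p=1$ the factor $|g|^{p-2}g=\operatorname{sgn}(g)$ is no longer differentiable, so I would regularize the convex function $\Psi(s)=|s|$ by a family of smooth convex functions $\Psi_{\varepsilon}$ (for instance $\Psi_{\varepsilon}(s)=\sqrt{s^{2}+\varepsilon^{2}}$) with $\Psi_{\varepsilon}''\geq0$, $|\Psi_{\varepsilon}'|\leq1$ and $\Psi_{\varepsilon}'\to\operatorname{sgn}$ pointwise. The chain rule yields $\Delta_{v}\bigl(\Psi_{\varepsilon}(g)\bigr)=\Psi_{\varepsilon}'(g)\,\Delta_{v}g+\Psi_{\varepsilon}''(g)\,|\nabla_{v}g|^{2}$; integrating in $v$ makes the left-hand side vanish, so
\[
\iint (\Delta_{v}g)\,\Psi_{\varepsilon}'(g)\,dx\,dv = -\iint \Psi_{\varepsilon}''(g)\,|\nabla_{v}g|^{2}\,dx\,dv \leq 0.
\]
Letting $\varepsilon\to0$ and invoking dominated convergence on the left (justified by $|\Psi_{\varepsilon}'(g)|\leq1$ and $\Delta_{v}g\in L^{1}$) delivers $\iint(\Delta_{v}g)\,\operatorname{sgn}(g)\,dx\,dv\leq0$, the desired bound at $p=1$.

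The only genuine obstacle is this endpoint $p=1$: the clean integration by parts of the previous step breaks down because $\operatorname{sgn}$ fails to be differentiable at the zeros of $g$, so one must pass through the convex regularization and then justify the limit by dominated convergence. The range $p>1$ is otherwise immediate, and the well-posedness in the singular sub-range $1<p<2$ follows from the boundedness of $|g|^{p-1}$ noted at the outset.
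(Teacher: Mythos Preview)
Your proof is correct and follows exactly the route sketched in the paper: integration by parts in $v$ for $p>1$, and convex regularization of $\Psi(s)=|s|$ followed by dominated convergence for $p=1$. You have simply made explicit the computations (the identity $\nabla_{v}(|g|^{p-2}g)=(p-1)|g|^{p-2}\nabla_{v}g$, the choice $\Psi_{\varepsilon}(s)=\sqrt{s^{2}+\varepsilon^{2}}$, and the passage to the limit) that the paper leaves as a one-line remark.
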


%For acknowledgements section, please don't number the section, please begin it with \section*{Acknowledgements}
\section*{Acknowledgments} I would like to thank my advisors Fr\'ed\'eric H\'erau and Joseph Viola for their invaluable help and support during the maturation of this paper. I thank also the Centre Henri Lebesgue ANR-11-LABX-0020-01 and the Faculty of Sciences (Section I) of Lebanese University for its support .

%\addcontentsline{toc}{section}{Bibliographie} 

%Ou utiliser bibitem
%\begin{thebibliography}{3}
%   \bibitem[1]{Mau84} J.R. MAUTZ and R.F. HARRINGTON, "An E-Field solution for a conducting surface small or comparable to the wavelength", IEEE Trans.  AP, vol.  32, pp.  330-339, 1984.
%   \bibitem[2]{max82} J.CLERCK MAXWELL, Atreatise on Electricity and Magnetism, 3rd ed., vol.  2. Oxford : Clarendon, 1982.
%   \bibitem[3]{Oet06} TOBIAS OETIKER {\it et al.}, "Une courte introduction Ã  \LaTeXe{}", disponible Ã  l'adresse
   %\emph{\href{http://www.ctan.org/tex-archive/info/lshort/french/flshort-3.20.pdf}{http://www.ctan.org/tex-archive/info/lshort/french/flshort-3.20.pdf}}.
%\end{thebibliography}

\end{document}